\newtheorem{theorem}{Theorem}[section] %
\newtheorem{proposition}[]{Proposition}[section] 
\newtheorem{remark}{Remark}[section]
\numberwithin{equation}{section}
\providecommand*\email[1]{\href{mailto:#1}{#1}}
\definecolor{darkgreen}{rgb}{0.0,0.5,0.0}
\definecolor{darkspringgreen}{rgb}{0.05, 0.5, 0.06}
\definecolor{MyBlue}{rgb}{0.0,0.0,0.55}
\colorlet{NextBlue}{MyBlue!20}
\colorlet{SecondBlue}{MyBlue!40}
\newcommand{\Rho}{\mathrm{P}}
\NewDocumentCommand{\tens}{t_}
 {%
  \IfBooleanTF{#1}
   {\tensop}
   {\otimes}%
 }
\NewDocumentCommand{\tensop}{m}
 {%
  \mathbin{\mathop{\otimes}\displaylimits_{#1}}%
 }
\newcounter{rhp}
\newenvironment{rhp}[1][]{\refstepcounter{rhp}\par\medskip
   \noindent \textbf{Riemann-Hilbert problem~\therhp. #1} \rmfamily}{\medskip}
   \numberwithin{rhp}{section} 
\definecolor{shadecolor}{rgb}{0.9, 0.9, 0.86}
\tikzset{wave/.style={decorate, decoration=snake}}
\definecolor{linkBlue}{rgb}{0.0,0.0,0.55}
\definecolor{MyBlue}{rgb}{0.0,0.0,0.55}
\colorlet{NextBlue}{MyBlue!20}
\colorlet{SecondBlue}{MyBlue!40}
\definecolor{MyGreen}{rgb}{0.25,0.75,0.35}
\newcommand{\tr}{\operatorname{tr}}
\newcommand{\dd}{\mathrm{d}}
\newcommand{\cn}{\mathrm{cn}}
\newcommand{\sn}{\mathrm{sn}}
\newcommand{\dn}{\mathrm{dn}}
\newcommand{\sfa}{{\sf a}}
\newcommand{\sfr}{{\sf r}}
\newcommand{\sfb}{{\sf b}}
\newcommand{\sfc}{{\sf c}}
\newcommand{\ii}{\mathrm{i}}
\newcommand{\ee}{\mathrm{e}}
\renewcommand{\Im}{\mathrm{Im} } 
\renewcommand{\Re}{\mathrm{Re} } 
\newcommand{\pain}[1]{Painlev\'e}
\title{Nonlinear steepest descent on a torus: a case study of the Landau-Lifshitz equation}
\author{Harini Desiraju}
\author{Alexander Its}
\author{Andrei Prokhorov$^*$}
\thanks{$^*$Corresponding author}
\address[Harini Desiraju]{Sydney Mathematical Research Institute (SMRI), School of Mathematics and Statistics, University of Sydney, Camperdown, NSW 2006, Australia.}
\email{\href{mailto:harini.desiraju@sydney.edu.au}{harini.desiraju@sydney.edu.au}}
\address[Alexander Its]{Department of Mathematical Sciences, Indiana University, Indianapolis, 402 N. Blackford St. Indianapolis, IN 46202-3267, USA,
and St. Petersburg University, 7/9 Universitetskaya nab. 199034 St. Petersburg, Russia.}
\email{\href{mailto:aits@iupui.edu}{aits@iu.edu}}
\address[Andrei Prokhorov]{Department of Mathematics, University of Michigan, East Hall, 530 Church St., Ann Arbor, MI 48109, USA, and St. Petersburg University, 7/9 Universitetskaya nab. 199034 St. Petersburg, Russia, and Department of Statistics, University of Chicago, 5747 South Ellis Avenue, Chicago, IL 48109, USA}
\email{\href{mailto:andreip@uchicago.edu}{andreip@uchicago.edu}}
\begin{document}

\begin{abstract}
We obtain rigorous large time asymptotics for the Landau-Lifshitz equation in the soliton free case by extending the nonlinear steepest descent method to genus 1 surfaces. The methods presented in this paper pave the way to a rigorous analysis of other integrable equations on the torus and enable asymptotic analysis on different regimes of the Landau-Lifshitz equation. 
\end{abstract}
\maketitle
\tableofcontents
\newpage
\section{Introduction}

In modern theory of integrable systems, the key conceptual role is played by the identification of an integrable nonlinear PDE or ODE as
a compatibility condition of a certain linear system-Lax pair.  Usually, Lax pairs are realized 
as matrix valued first order differential linear operators with respect to
the basic dynamical time-space variables whose
coefficients are rational functions of the additional complex
''spectral'' parameter. For the vast majority of integrable PDEs,  this spectral parameter varies on the
Riemann sphere. For this case, the apparatus for the analysis of integrable systems has been very well developed.  The situation when the spectral parameter belongs to an algebraic curve of a higher genera possess very interesting and serious challenges.
We refer the reader to the 1981 work of I. Krichever and S. Novikov \cite{krichever1981holomorphic} and the 1987 work of N. Hitchin \cite{hitchin1987stable} for a detailed exposition of the specific algebra-geometric features that the integrable systems whose Lax pairs are set on Riemann surfaces do possess. \\
 
The first example of an integrable system whose Lax pair ``lives'' on a Riemann surface
with nontrivial genus is the classical Landau-Lifshitz (LL) equation whose definition we will
soon recall.  The associated Lax pair representation was found in the late 70s by
E. Sklyanin \cite{Sklyanin:121210}  and, independently, by A. Borovik \cite{borovik1981linear}  and it is set on
a torus, i.e. on an elliptic (genus one) curve. Soon after this discovery, several
authors had extended to the Landau-Lifshitz equation such key ingredients of the standard inverse scattering technique
as dressing procedure, finite gap integration, and the Riemann-Hilbert representation.
We refer the reader to the survey paper \cite{bikbaev2014landau} for comprehensive descriptions
of the concrete results obtained in 70-80s (see also the end of this introduction) and for a detailed history of the subject. We want to highlight specifically the work of A. Mikhailov \cite{mikhailov1982landau} where the exact formulation of the
associated Riemann-Hilbert problem on the torus was presented and the
works of Y. Rodin  \cite{rodin1989,rodin_1988,rodin1984}, where  the solvability of this Riemann-Hilbert problem
was thoroughly analyzed. In our work, we shall constantly use the results of \cite{mikhailov1982landau,rodin1989,rodin_1988,rodin1984}.
\\

Since the above mentioned papers written in the 70-80s not much of the further progress in studying
integrable systems on the algebraic curves has been achieved owing to the non-trivial topological environment.
It is only in recent years that we have seen a rigorous generalization of some techniques
such as Fredholm determinants  \cite{del2020isomonodromic}, Pad\'e approximations \cite{bertola2021pade}, orthogonal polynomials \cite{bertola2022nonlinear,desiraju2023class}, and nonlinear waves \cite{Kamvissis_Teschl_2012, Kruger_Teschl_2009, Mikikits-Leitner_Teschl}
among others, to genus 1 surfaces. We should specifically mention M.  Bertola's paper \cite{bertola2022nonlinear}
where a key analytical apparatus of the theory integrable systems -  the nonlinear steepest descent method
of Deift and Zhou, or, rather, its orthogonal polynomial version, has been extended to the genus one curves.
In our paper we are suggesting a similar extension of the PDE version of the method
(i.e., large time analysis of the solutions of Cauchy problems for integrable PDEs).   We take the LL equation as a case study. 
As has already been mentioned, the corresponding Lax pair is set on the algebraic curve of genus 1. We show how one can extend the nonlinear steepest descent method
on this nontrivial genus situation and obtain the large time asymptotic of the solution of the
Cauchy problem in the Schwartz class.\\
 
As in the genus zero case, the two versions (orthogonal polynomials and large time PDE analysis)  of
the nonlinear steepest descent differ in several methodological aspects. Hence, in our analysis
we cannot directly use the results of \cite{bertola2022nonlinear}. In particular, to accommodate a
rather unusual symmetry-normalization of the Riemann-Hilbert problem associated
with LL equation, we have to use, instead of the discussed in \cite{bertola2022nonlinear} matrix Cauchy kernel,
a very special Gusman-Rodin Cauchy kernel (\cite{Gusman}) on the torus which creates certain extra difficulties in the analysis - see section \ref{sec:singular_integral_equation} for more details. It should be noted that the Gusman-Rodin kernel coincides with the kernel considered in \cite{bertola2021pade} (see Remark \ref{rem:cauchy_kernel} for details). This scalar kernel should be compared with the different matrix Cauchy kernel in \cite{ Kamvissis_Teschl_2012, Kruger_Teschl_2009, Mikikits-Leitner_Teschl} also used to construct solutions of the Riemann-Hilbert problem on the Riemann surface that
satisfy the symmetry conditions. We believe that the constructions of
 paper  \cite{bertola2022nonlinear} will be relevant and indeed very useful  for another important
question in the asymptotic theory of PDE which is dispersionless limits. \\

We should also mention that the Riemann-Hilbert problem on Riemann surfaces was
used to construct explicit solutions of model problems in terms of hyperelliptic functions  for the necessities of Deift-Zhou nonlinear steepest descent method;
see \cite{Deift_Its_Zhou,Korotkin,Piorkowski_Teschl,Kamvissis,Kamvissis_Teschl_2006,Egorova_Michor_Teschl}. In a setting similar to ours, when the solution is constructed using the singular integral equation, it
appeared in \cite{Kamvissis_Teschl_2012, Kruger_Teschl_2009, Mikikits-Leitner_Teschl}. It should though to be  pointed out that in our case the
Riemann surfaces of nontrivial genus appear from the very beginning, from the Lax pair,
while in the just mentioned works the nontrivial genus comes from the specific
class of the solutions they consider – initial data on quasiperiodic background.\\

 The large time behavior of the solution of the Cauchy problem for the LL equation in the framework of the Riemann-Hilbert method has already been considered in the paper \cite{bikbaev1988asymptotics}. In that paper, the asymptotics of the solution of  Mikhailov's
  Riemann-Hilbert problem was obtained via the WKB-analysis of the direct scattering problem for the space part of the Sklyanin-Borovik Lax pair. This methodology has been used in the theory of integrable systems since the pioneering work of S.  Manakov and V. Zakharov \cite{zakharov1976asymptotic} and before the introduction of the nonlinear steepest descent method. In \cite{bikbaev1988asymptotics}, some of the  steps of the analysis are not
 fully justified. In principle, they could be made rigorous using the ideas of Kitaev's paper \cite{kitaev1991justification}.
 The nonlinear steepest descent technique we are developing in this paper not only provides a ``short cut'' to the rigorous derivation of the asymptotics of the particular problem associated with the particular system - Landau-Lifshitz equations.
 We also believe that it can be used in the further analysis of the nontrivial genus integrable systems, which have recently attracted a lot of interest.
 Specifically, we are having in mind the already mentioned articles \cite{del2020isomonodromic}, \cite{bertola2021pade}, \cite{bertola2022nonlinear,desiraju2023class}, and \cite{Kamvissis_Teschl_2012, Mikikits-Leitner_Teschl}. \\

The Landau-Lifshitz (LL) equation was introduced by Landau and Lifshitz in 1935 \cite{Landau:437299} to describe the time evolution of magnetism of ferromagnetic material or spin waves and since has become a fundamental tool in the magnetic recording industry \cite{wei2012micromagnetics}. It can be interpreted as the continuum limit of the Heisenberg spin chain \cite{quispel1982equation}. The LL equation in one space and one time dimension (1+1) is the most general model of magnetism that is integrable and has garnered much attention in the literature due to its applications in mathematics and physics. It can be viewed as the universal integrable equation due to the appearance of several integrable equations 
in appropriate limits \cite{faddeev1987hamiltonian,de2022recent,de2020landau}, it has rich physical properties such as the existence of soliton solutions \cite{bobenko1983landau, gamayun2019domain} and the relation to classical mechanics \cite{veselov}.  
A rigorous analysis of its solutions is therefore consequential to a variety of fields.

The LL equation reads
\begin{align}\label{def:eqLL}
      {\partial_t L(x,t)}{} = L(x,t) \times   {\partial_x^2 L(x,t)} + L(x,t) \times J L(x,t); && \sum_{j=1}^3\left(L_j(x,t)\right)^2=1 && 
\end{align}
with 
\begin{align*}
    L(x,t) = \left( L_1(x,t), L_2(x,t), L_3(x,t)\right),&&  J = \textrm{diag} \left(J_1, J_2, J_3 \right), && J_1<J_2<J_3.
\end{align*}
Here, the vector $L(x,t)$ describes the magnetization of the material.
It is immediate to note that for $J_1=J_2=J_3$, the above equation describes a continuous Heisenberg spin chain, 
whose  Lax pair integrability  was established  in 1978 by L. Takhtajan \cite{takhtajan1977integration}, and in matrix form, it relates to the nonlinear Schr\"odinger equation \cite{zakharov1979equivalence}.
The seminal papers by Sklyanin \cite{Sklyanin:121210} and Borovik \cite{borovik1981linear} showed that the LL equation is associated to the following linear system on the torus (the Lax pair)
\begin{align}\label{def:LaxU}
     {\partial _x\Psi(\lambda,x,t)}=U(\lambda,x,t) \Psi(\lambda,x,t), \\ \label{def:LaxV}   {\partial_t \Psi(\lambda,x,t)}=V(\lambda,x,t) \Psi(\lambda,x,t),
\end{align}
where $\lambda\in \mathbb{T}^2=\left\lbrace \lambda: |\Re (\lambda)|\leq 2K, |\Im (\lambda)|\leq 2K'\right\rbrace$ shown in Figure \ref{rectangle}, and $K, K'$ are complete elliptic integrals of moduli $k$, $k' = \sqrt{1-k^2}$ respectively. 
\begin{figure}[h!]
\centering
\includegraphics[trim={0cm 0cm 0cm 0cm},clip, width=6cm]{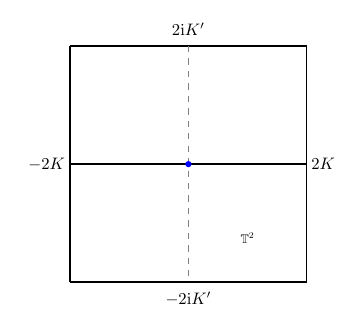}
\caption{Torus.} \label{rectangle}
\end{figure}
It is noteworthy that the linear system above takes values on the torus while the non-trivial topology is not reflected in the LL equation. The matrices 
\begin{align}\label{def:UV}
    U(\lambda,x,t) &:= - \ii \sum_{j=1}^3 \sigma_jL_{j}(x,t)  w_j(\lambda), \\V(\lambda,x,t) &:=  \ii \sum_{\substack{j,m,n=1\\
    j \neq m \neq n}}^3 \sigma_jL_{j}(x,t)  w_m(\lambda) w_n(\lambda) + \ii \sum_{j=1}^3\sigma_j P_j(x,t)  w_j(\lambda),\\
     P_j(x,t) &:=   \left({\partial_x L}(x,t) \times L(x,t)\right)_j\label{eq:momentum}
\end{align}
are written with $\sigma_\alpha$ being the Pauli matrices\footnote{The Pauli matrices are given by
\begin{align*}
    \sigma_1=\left(\begin{array}{cc}
       0  & 1 \\
       1  & 0
    \end{array} \right), && \sigma_2=\left(\begin{array}{cc}
       0  & -\ii \\
       \ii  & 0
    \end{array} \right), && \sigma_3=\left(\begin{array}{cc}
       1  & 0 \\
       0  & -1
    \end{array} \right).
\end{align*}}, and 
\begin{align}\label{def:w1w2w3}
    &&  w_1(\lambda)=\rho\,\frac{1}{\mathrm{sn}(\lambda, k)},&& w_2(\lambda)=\rho\,\frac{\mathrm{dn}(\lambda, k)}{\mathrm{sn}(\lambda, k)},&& w_3(\lambda)=\rho\,\frac{\mathrm{cn}(\lambda, k)}{\mathrm{sn}(\lambda, k)}.
\end{align}
The elliptic curve is given by 
\begin{align}\label{def:elliptic_curve}
    w_i^2(\lambda) - w_j^2(\lambda) =- \frac{1}{4}(J_i-J_j); \qquad i,j=1,2,3,
\end{align}
and the parameter $\rho$ and elliptic modulus $k$ are given respectively by
\begin{align}\label{def:rho}
    \rho = \frac{\sqrt{J_3- J_1}}{2}, &&  k = \sqrt{\frac{J_2-J_1}{J_3-J_1}}.
\end{align}
The equation \eqref{def:eqLL} then arises as the compatibility condition (Lax or zero curvature equations)
\begin{align}\label{eq:comp-cond}
    {\partial_t U}(\lambda,x,t) - {\partial_x V}(\lambda,x,t)+ \left[ U(\lambda,x,t),V(\lambda,x,t)\right]=0.
\end{align}

The Cauchy problem for \eqref{def:eqLL} was recast as a matrix Riemann-Hilbert boundary problem in \cite{mikhailov1982landau, rodin1984} using Jost solutions of the linear equation (\ref{def:LaxU}). In the pure soliton case, this  Riemann-Hilbert problem can be solved by means of linear algebra,
and the corresponding multi soliton solutions were studied by several authors, see \cite{,bikbaev2014landau,bobenko1983landau} and references therein. Finite-gap solutions were constructed in \cite{bobenko1985}. \\

The asymptotic behaviour of soliton free solution of \eqref{def:eqLL} for $t\to +\infty$ was then obtained in \cite{bikbaev1988asymptotics}.
 The basic idea of the approach of \cite{bikbaev1988asymptotics} is the construction of the asymptotic solution of the Riemann - Hilbert  problem from \cite{mikhailov1982landau, rodin1984} using the asymptotic solution of the direct monodromy problem for an auxiliary linear system posed on the same torus. The solution of this direct monodromy problem, in turn, uses certain prior information about the solution of the LL equation.   This approach is a genus one version of the method first suggested in \cite{its1985ismonodromic} for the case of genus zero of the mKdV equation
and, as already mentioned, is not fully rigorous.  In this paper, as we have also mentioned above,  we generalize the nonlinear steepest descent technique to the case of the torus and obtain rigorous large time asymptotics for the LL equation. We are now going to state the main results of this paper. For that, we need to introduce some additional notation.

 The general reference to the scheme we are following in our analysis is the Inverse Scattering Method (IST). This, in particular, means that our goal is to present the asymptotics of the Cauchy problem for the LL equation in terms of the {\it scattering data} associated with the first linear operator of the Lax pair, i.e. the $x$ - equation (\ref{def:LaxU}).  This is performed in two steps.  First, following \cite{Sklyanin:121210},
 we introduce the {\it direct scattering map} from the initial data $L_{t=0} \equiv  L(x)$  to the {\it reflection coefficient} ${\sf r}(\lambda)$ whose properties are described in detail in section \ref{sec:direct}. {\it In this first step, as everywhere else in the paper, we assume that the initial data $L(x)$ satisfies the soliton-free condition, i.e., that the corresponding transition coefficient $\sfa(\lambda)$
 has no zeros. }
  Then, following \cite{mikhailov1982landau,rodin1989,rodin_1988,rodin1984},
we set the {\it inverse scattering map} formulated as a certain Riemann-Hilbert problem on the torus $\mathbb{T}^2$ and defined by the given reflection coefficient $\sfr(\lambda)$. This is the Riemann-Hilbert problem \ref{rhp1} which is described in detail in section \ref{sec:inverse}. The global in $x$ solvability (and uniqueness) of this Riemann-Hilbert problem was proven in \cite{rodin1989}. This constitutes the first step in the construction of the inverse scattering map $\sfr(\lambda) \to L(x)$.
   In our first main result, we complete the construction of this map by analyzing the large $|x|$ asymptotics of the solution of the Riemann-Hilbert problem \ref{rhp1}. This analysis allows us
to deduce the belonging of the ``potentials" $L_1(x), L_2(x), L_3(x)-1$ to the Schwartz class and to establish that the corresponding reflection coefficient coincides with the given function ${\sf r}(\lambda)$.
 
\begin{theorem}\label{thm:initial_condition_asymptotic}
Let $Y(\lambda, x)$ be the solution of the Riemann-Hilbert problem \ref{rhp1} with ${\sf r}(\lambda)$ satisfying 
properties \eqref{r_property_1}-\eqref{r_property_6} of section \ref{sec:direct}. Then the function $L(x)$ constructed from it by formula
\begin{equation}\label{sol_LL_RHP_init_0}
   Y(0,x) \sigma_3 \left(Y(0, x)\right)^{-1} = \sum_{j=1}^3 L_j(x) \sigma_j ,
\end{equation}
(see also \eqref{sol_LL_RHP_init}) belongs to the Schwartz class: $L_1(x), L_2(x), L_3(x)-1 \in \mathcal{S}(\mathbb{R})$, and it defines the
initial data for the LL equation whose reflection coefficient is given by ${\sf r}(\lambda)$.
\end{theorem}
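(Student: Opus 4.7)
The plan is to establish the two assertions of the theorem—Schwartz class membership of the potentials, and consistency of the reflection coefficient—by a large $|x|$ asymptotic analysis of the Riemann-Hilbert problem \ref{rhp1}. The main analytic tool is the nonlinear steepest descent method on the torus, i.e.\ the stationary $t=0$ version of the large-time machinery that is being developed throughout the paper.

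For the decay of $L_j(x)-\delta_{j3}$, I would perform a steepest descent analysis of RHP \ref{rhp1} with $x$ as the large parameter. The jump matrix carries oscillatory factors of the form $e^{\pm\ii x w(\lambda)}$ built from the weights in \eqref{def:w1w2w3}. First I would draw the signature table on the torus $\mathbb{T}^2$ for $\Re(\ii x w(\lambda))$ and open lenses along the jump contour into regions of exponential decay. This reduces RHP \ref{rhp1} to a small-norm problem, which is solved by Neumann iteration of the singular integral equation built from the Gusman-Rodin Cauchy kernel of section \ref{sec:singular_integral_equation}. The rapid decay and smoothness of $\sfr(\lambda)$ encoded in properties \eqref{r_property_1}--\eqref{r_property_6} propagate through the estimates and yield $Y(\lambda,x)=I+O(|x|^{-N})$ for every $N$, uniformly on compact subsets of $\mathbb{T}^2$ away from the contour. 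Evaluating at $\lambda=0$ via \eqref{sol_LL_RHP_init_0} then gives $L_1(x),L_2(x),L_3(x)-1 = O(|x|^{-N})$ for all $N$. To upgrade this pointwise decay to Schwartz class membership I would differentiate RHP \ref{rhp1} in $x$: the problems satisfied by $\partial_x^k Y$ differ from the original only by polynomial-in-$x$ factors multiplying the same exponentials, so the steepest descent argument applies line-by-line and yields $\partial_x^k L_j(x)=O(|x|^{-N})$ for all $k,N$. A cleaner alternative is to invoke the Lax equation \eqref{def:LaxU} to control derivatives algebraically once $L$ itself is known to decay.

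For the second assertion I would unwind the construction. Build Jost solutions $\psi_\pm(\lambda,x)$ from $Y(\lambda,x)$ by multiplication with the appropriate exponential factors on the torus, so that they carry the Jost asymptotics demanded in section \ref{sec:direct}; the decay of $L_j(x)-\delta_{j3}$ established in the previous step is exactly what is needed for these asymptotics to make sense. The jump relation in RHP \ref{rhp1} is by construction expressed in terms of $\sfr(\lambda)$; read through $\psi_\pm$, the same relation is precisely the scattering relation that defines the reflection coefficient of the reconstructed $L(x)$. Uniqueness of the Jost solutions identifies the two, so the reflection coefficient of $L(x)$ must coincide with the prescribed $\sfr(\lambda)$.

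The main obstacle is the steepest descent step itself. Unlike in the genus-zero setting, the phase $xw(\lambda)$ lives on a torus, so the topology of the level sets of $\Re(\ii x w(\lambda))$ is intrinsically richer, and the small-norm inversion relies on the Gusman-Rodin kernel rather than the standard Cauchy kernel, which introduces the extra technical difficulties flagged in the introduction. Getting the $O(|x|^{-N})$ rate for every $N$, rather than a single power, requires tracking carefully how the smoothness of $\sfr(\lambda)$ interacts with the torus Cauchy kernel order-by-order; this is where I expect the bulk of the technical work to lie.
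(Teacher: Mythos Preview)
Your approach is essentially the paper's: a nonlinear steepest descent analysis of RHP~\ref{rhp1} in the large-$|x|$ parameter, reduction to a small-norm problem handled by the Gusman--Rodin singular integral equation and the symmetrization \eqref{eq:N-Z}, then identification of the Jost solutions with suitable columns of $Y_\pm$ so that the jump \eqref{rhp1:jump} becomes the scattering relation \eqref{jump_jost} with the prescribed $\sfr$.

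Two concrete points that the paper makes explicit and that your outline leaves vague. First, the phase is $x\,w_3(\lambda)$, and $w_3$ is \emph{monotone} on each real segment $[0,2K]$ (equivalently $[-2K,0]$); hence there are no stationary points and no local parametrices---the $|x|\to\infty$ analysis is structurally much simpler than the $t\to\infty$ one you invoke. Second, since $\sfr$ is only $C^\infty$, lens opening cannot be done directly: the paper passes to the global variable $\Theta=w_3(\lambda)\in\mathbb{R}$ and applies the Deift--Zhou Fourier splitting, writing $\Rho(\lambda(\Theta))\ee^{2\ii x\Theta}=\Rho_{ana}+\Rho_{dec}$ with $\Rho_{ana}$ analytic in $\Theta$ (used to open the lens) and $\Rho_{dec}=\mathcal{O}(x^{-k})$ for every $k$ (left on $\Gamma_1\cup\Gamma_2$). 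This is precisely the mechanism that turns the $C^\infty$ hypothesis on $\sfr$ into the $\mathcal{O}(|x|^{-N})$ bounds for all $N$ that you flag as the main obstacle. The paper also treats $x\to+\infty$ and $x\to-\infty$ with the two different triangular factorizations of $G$, and in the $LDU$ case introduces the scalar $\delta(\lambda)$ in \eqref{eq:delta} (which equals $1/\sfa(\lambda)$ on $\Omega_+$) to absorb the diagonal jump before opening lenses.
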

Next, we are  `` turning on'' the time which means passing from the Riemann-Hilbert problem \ref{rhp1} to the Riemann-Hilbert problem \ref{rhp2} of section \ref{sec:time}. 
The solvability of this problem for all  real $x$ and $t>0$ is shown in \cite{rodin1989} and  formula 
\begin{align}\label{sol_LL_RHP_init_1}
   Y(0,x,t) \sigma_3 \left(Y(0, x, t)\right)^{-1} = \sum_{j=1}^3 L_j(x,t) \sigma_j ,
\end{align}
where $Y(\lambda,x,t)$ is the solution of the Riemann-Hilbert problem \ref{rhp2},  determines the solution $L(x,t)$ of the Cauchy problem for the LL equation whose initial data $L(0, x) \equiv L(x)$ corresponds to the reflection
coefficient ${\sf r}(\lambda)$. Our second main result is the extension of the nonlinear steepest descent
method of Deift and Zhou to the Riemann-Hilbert problem \ref{rhp2} and evaluation of the leading term of the large time asymptotics
of the corresponding solution $L(x,t)$ of the LL-equation.
\begin{theorem}\label{thm:asymp_LL}
  Let $Y(\lambda,x,t)$ be the solution of the Riemann-Hilbert problem \ref{rhp2} with ${\sf r}(\lambda)$ satisfying the properties \eqref{r_property_1}-\eqref{r_property_6} 
  of section \ref{sec:direct} and
let the vector function $L(x,t)$ be determined by equation \eqref{sol_LL_RHP_init_1}. Then, $L(x,t)$ solves the Cauchy problem for the LL equation characterized by the reflection coefficient ${\sf r}(\lambda)$ and its large time asymptotic behavior is given by the formulae \begin{align}
        L_1(x,t) &= \frac{1}{\rho}\left(\frac{2\nu}{t\varphi_0} \right)^{1/2} { w_2(\lambda_0) \cos\theta(x,t)}{} + \mathcal{O}(t^{-\frac{2}{3}}), \\
        L_2(x,t) &= \frac{1}{\rho}\left(\frac{2\nu}{t\varphi_0} \right)^{1/2} { w_1(\lambda_0) \sin\theta(x,t)}{}+ \mathcal{O}(t^{-\frac{2}{3}}), \\
        L_3(x,t) &= 1 - \frac{1}{2} \left(L_1^2(x,t) + L_2^2(x,t) \right)+\mathcal{O}(t^{-\frac{7}{6}}),
    \end{align}
    $$
    t\to \infty, \quad \frac{x}{t}  = \varkappa, \quad 0< m \leq\varkappa \leq M,
    $$
    where
    \begin{align}
         \theta(x,t) &=  2tp(\lambda_0,\varkappa)+ \nu \log t-\frac{\pi}{4}- \arg\Gamma(\ii\nu) + \arg \sfr_0 - 2\sfc_0 + \nu \log\left(\frac{2\varphi_0 }{\beta_0^2} \right),
    \end{align}
     \begin{align}
         \eqref{def:plambda}:\,\, & p(\lambda,\varkappa)= \varkappa\, w_3(\lambda)-2w_1(\lambda)w_2(\lambda),
     \end{align}
     and the value of the stationary point $\lambda_0\in [-2K,0]$ is determined by the equation \\\eqref{eq:pprimelambda}: $\partial_\lambda p(\lambda_0,\varkappa)=0$. With such $\lambda_0$, the parameter $\varphi_0=-\partial^2_\lambda p(\lambda_0,\varkappa)$ is obtained from  \begin{align*}
         \eqref{def:phi_0}:\,\, &  \varphi_0 =\frac{1}{\rho^2}\left( 8 w_1(\lambda_0) w_2(\lambda_0) w_3^2(\lambda_0)+(w_1^2(\lambda_0)+w_2^2(\lambda_0))(2w_1(\lambda_0)w_2(\lambda_0)-\varkappa w_3(\lambda_0)) \right),
     \end{align*} the reflection coefficient 
     \begin{align*}
         \sfr_0\equiv \sfr(\lambda_0),  &&\eqref{def:nu}:\,\, & \nu=\frac{1}{2\pi }\log(1+|{\sf r}_0|^2).
     \end{align*} The remaining terms are determined as follows:
     \begin{align*}
          \eqref{def:beta}:\,\, & \beta(\lambda) = \frac{\sigma(\lambda) \sigma(\lambda-2K)}{\sigma(\lambda +2\ii K') \sigma(\lambda-2\ii K'-2K)}, &&  \eqref{def:beta0}: &  \beta_0 := \frac{\sigma(-2K)}{\sigma(2\ii K') \sigma(-2\ii K'-2K)}, 
     \end{align*}
     \begin{align*}
         \eqref{def:c_0}:\,\, &  \sfc_0 =\frac{1}{2\pi  } \int^{0}_{\lambda_0} d\left(\log \left( 1+ |{\sf r}(\eta)|^2 \right) \right)  \log\beta(\eta- \lambda_0),
     \end{align*}
     where $\sigma(\lambda)$ denotes the Weierstrass sigma function.
\end{theorem}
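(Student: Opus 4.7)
The plan is to adapt the Deift--Zhou nonlinear steepest descent machinery, in the form it takes for PDE problems with stationary phase oscillations of the form $e^{\pm 2itp(\lambda)}$, to the elliptic Riemann--Hilbert problem \ref{rhp2} on $\mathbb{T}^2$. First I would carry out the stationary phase analysis of the phase $p(\lambda,\varkappa)=\varkappa w_3(\lambda)-2w_1(\lambda)w_2(\lambda)$ on the torus: solving $\partial_\lambda p(\lambda_0,\varkappa)=0$ singles out the stationary point $\lambda_0\in[-2K,0]$ (one has to verify it is indeed in this segment for $\varkappa$ in the prescribed range) and the curvature $\varphi_0=-\partial^2_\lambda p(\lambda_0,\varkappa)$. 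The next preparatory task is to draw the signature table for $\Im\,p(\lambda,\varkappa)$ on the fundamental rectangle and to identify the two regions, one where $\Im\,p>0$ and one where $\Im\,p<0$, adjacent to $\lambda_0$ through which the oscillatory contour of the jump will be deformed.

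Second, I would factor the jump matrix of the RHP \ref{rhp2} in the two standard ways, one valid on the portion of the contour on which $\sfr(\lambda)/(1+|\sfr(\lambda)|^2)$ can be absorbed into the upper/lower triangular factor, and the other for the complementary portion. To merge these factorizations across $\lambda_0$ one needs a scalar function $\delta(\lambda)$ on the torus solving the scalar RHP $\delta_+(\lambda)=\delta_-(\lambda)(1+|\sfr(\lambda)|^2)$ on the arc $(-2K,\lambda_0)$, with appropriate normalization and symmetry. This is where the Gusman--Rodin Cauchy kernel enters: writing $\log\delta$ as its Cauchy integral yields
\begin{equation}
\delta(\lambda)=\exp\!\Bigl(\frac{1}{2\pi i}\int_{-2K}^{\lambda_0}\log(1+|\sfr(\eta)|^2)\,dK_{GR}(\eta,\lambda)\Bigr),
\end{equation}
and the combinations of Weierstrass sigma functions in $\beta(\lambda)$ and $\beta_0$ arise as the specific shape of this kernel on the torus. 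The boundary expansion of $\delta$ near $\lambda_0$ yields the singular factor $(\beta(\lambda-\lambda_0))^{i\nu}$ and the constant $\sfc_0$ from the regularized integral.

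Third, I would perform the lens opening with this $\delta$-conjugation, so that off a small disk around $\lambda_0$ the jump matrix on the deformed contour is exponentially close to the identity. The global (outer) model is then simply the identity. Near $\lambda_0$ I would build the local parametrix from parabolic cylinder functions, matching the model jumps on four rays emanating from $\lambda_0$; the scaling variable is $\zeta=\sqrt{t\varphi_0}\,(\lambda-\lambda_0)$, and the model produces the universal combination $\Gamma(i\nu)$ together with the phase $\nu\log t-\pi/4+\nu\log(2\varphi_0/\beta_0^2)-2\sfc_0+\arg\sfr_0$. A small-norm argument for the associated singular integral equation, using again the Gusman--Rodin kernel as the resolvent kernel, gives the error $\mathcal{O}(t^{-1})$ at the level of the RHP, which translates into the remainders $\mathcal{O}(t^{-2/3})$, $\mathcal{O}(t^{-7/6})$ in the final formulae after one additional term is pulled out of the parametrix expansion.

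Finally, I would extract $L(x,t)$ via \eqref{sol_LL_RHP_init_1}: substituting $Y(0,x,t)=I+t^{-1/2}\mathcal{M}(x,t)+\mathcal{O}(t^{-1})$ into $Y\sigma_3 Y^{-1}=\sum L_j\sigma_j$ and reading off the traces with $\sigma_1,\sigma_2,\sigma_3$ produces the stated leading terms, with $\cos\theta(x,t),\sin\theta(x,t)$ arising as the real and imaginary parts of the off-diagonal entry of $\mathcal{M}(0,x,t)$ and the factors $w_1(\lambda_0),w_2(\lambda_0)/\rho$ coming from the evaluation of the elliptic ``spectral weights'' at the point $\lambda=0$ against the parametrix contribution at $\lambda_0$. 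The main obstacle I anticipate is the analysis tied to the Gusman--Rodin kernel: controlling $\delta(\lambda)$ precisely enough to compute its expansion at $\lambda_0$ (which fixes $\sfc_0$ and $\beta_0$) and, simultaneously, to bound the resolvent of the small-norm operator uniformly in $t$ and $x/t=\varkappa$ on compact subsets of $(0,\infty)$. Compared with the genus zero case, the elliptic nature of $\beta(\lambda)$ and the non-Cauchy growth behavior of the Gusman--Rodin kernel at the corners of the fundamental rectangle require careful handling at both the explicit-formula and functional-analytic levels, and this is where the torus geometry genuinely departs from the standard Deift--Zhou template.
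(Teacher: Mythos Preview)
Your overall strategy---Deift--Zhou steepest descent with lens opening, a scalar conjugating function, a parabolic-cylinder local parametrix, and a small-norm closure---is the paper's strategy. But two genuinely toroidal ingredients are missing from your plan, and without them you will not reach the stated formulae.

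First, the RHP \ref{rhp2} is not normalized at a point; it is normalized by the symmetry conditions $\sigma_3 Y(\lambda+2K)\sigma_3=Y(\lambda)$, $\sigma_1 Y(\lambda+2\ii K')\sigma_1=Y(\lambda)$ together with $\det Y=1$. Consequently there is not one stationary point but four, $\lambda_0,\ \lambda_0+2K,\ \lambda_0+2\ii K',\ \lambda_0+2K+2\ii K'$, each carrying a local parametrix related to the others by Pauli conjugation. The paper handles this by solving a point-normalized RHP (at $\lambda=\ii K'$) via the Gusman--Rodin kernel and then applying an explicit symmetrization
\[
Y=\tfrac{1}{\sqrt c}\bigl(\Phi+\sigma_3\Phi(\cdot+2K)\sigma_3+\sigma_1\Phi(\cdot+2\ii K')\sigma_1+\sigma_2\Phi(\cdot+2K+2\ii K')\sigma_2\bigr).
\]
Your outer model ``simply the identity'' and a single-point parametrix do not fit this normalization; the Gusman--Rodin kernel by itself does not produce a solution with the required $\sigma$-symmetries. (Incidentally, the scalar $\delta$ in the paper is built with the kernel $w_3(\eta-\lambda)/\rho$, not the Gusman--Rodin kernel, precisely because $w_3$ already carries the correct shift symmetries; and its jump lives on $[\lambda_0,0]$, not $(-2K,\lambda_0)$.)

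Second, your explanation of where the prefactors $w_1(\lambda_0),w_2(\lambda_0)$ come from (``elliptic spectral weights at $\lambda=0$'') is not the actual mechanism. After symmetrization, the contributions of the four local parametrices are summed against the Gusman--Rodin kernel $C(\mu,0)$ and its shifts, producing combinations $f(\lambda_0)-f(\lambda_0+2K)\pm f(\lambda_0+2\ii K')\mp f(\lambda_0+2K+2\ii K')$ of a certain four-term kernel sum $f$. These collapse to $4w_1(\lambda_0)/\rho$ and $4w_2(\lambda_0)/\rho$ only through the Weierstrass-$\zeta$ identities $\zeta(\lambda)-\zeta(\lambda+2K)+\zeta(2K)=(w_1(\lambda)+w_2(\lambda))/2\rho$ and their companions; this is a purely elliptic computation with no genus-zero analogue, and it is what fixes the final amplitudes. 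A last minor point: the $\mathcal O(t^{-2/3})$ remainder does not descend from an $\mathcal O(t^{-1})$ RHP error. It comes from optimizing the radius $t^{-1/2+\varepsilon}$ of the \emph{shrinking} local disks against the matching error on $\partial\mathbb D$ and the $L^1$/$L^2$ estimates of $G_R-\mathbb 1$; the paper takes $\varepsilon=1/6$, which yields $\min(1-2\varepsilon,2\varepsilon_2)=2/3$.
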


 This theorem describes the asymptotic behavior of the solution of the Cauchy problem for the Landau-Lifshitz equation with the initial data of the Schwartz class under the assumption of the absence of solitons.  
As has already been mentioned, the asymptotic formulae presented in this theorem have already been
obtained in \cite{bikbaev1988asymptotics} using a different approach.

The paper is organized as follows. In section \ref{sec:direct}, we recap the formulation of the direct scattering transform for
linear system (\ref{def:LaxU}) through Jost solutions. In section \ref{sec:inverse} we define the Riemann-Hilbert problem associated
with the inverse scattering transform for (\ref{def:LaxU}), and adapt the singular integral equation for our situation. Using it we provide the proof of Theorem \ref{thm:initial_condition_asymptotic} and guarantee that corresponding initial condition is from the Schwartz class. In section \ref{sec:time}
we formulate the time-dependent Riemann-Hilbert problem associated with the solution of
the Cauchy problem for the LL-equation.  In section \ref{sec:NSD}, we perform the nonlinear steepest descent analysis and note that the local parametrices are described by parabolic cylinder functions. In section \ref{Sec:asymp}, we obtain the large time asymptotic behavior of $L(x,t)$ and prove Theorem \ref{thm:asymp_LL}. In the appendices \ref{sec:jacobi-peoperties}, \ref{sec:Weierstrass_zeta}, \ref{sec:parabolic} we collect the facts about the elliptic functions and the parabolic cylinder functions. In appendix \ref{prop_proof} we provided the proof of more technical propositions related to the direct scattering problem.

\subsection*{Acknowledgements} Part of the work was carried out during the authors' residence at the Mathematical Sciences Research Institute in Berkeley, California, during the Fall 2021 semester. We acknowledge the support of National Science Foundation Grant No. DMS-1928930. H.D is partly supported by ARC grant DP-200100210, INI-Simons fellowship, and SMRI postdoctoral fellowship. A.I is partially supported by NSF grant DMS:1955265, by RSF grant No. 22-11-00070, and by a Visiting Wolfson Research Fellowship from the Royal Society. AP was supported by NSF MSPRF grant DMS-2103354 and RSF grant 22-11-00070.

\section{Direct scattering problem}\label{sec:direct}
We begin this section with a brief recap of the construction of the Riemann-Hilbert  boundary problem using Jost solutions for time $t=0$ in \cite{mikhailov1982landau, rodin1984,Sklyanin:121210}, and a study of its properties. We will then study the properties of the reflection coefficient for non-zero time. It will then provide the motivation to construct the Riemann-Hilbert problem for $t> 0$ in section \ref{sec:time}.

A starting point in this analysis is the assumption that the initial data for the Cauchy problem satisfies conditions $L_1(x), L_2(x), L_3(x)-1 \in \mathcal{S}(\mathbb{R})$, where $\mathcal{S}(\mathbb{R})$ is the Schwartz class. 
In particular
\begin{align}\label{eq:L_decay}
    \lim_{x\to \pm \infty} L(x) = (0,0,1).
\end{align}
Denote \begin{align}
\Gamma_1=\{\lambda\in \mathbb{T}^2:\mathrm{Im} (\lambda) =0\}&&\Gamma_2=\{\lambda\in \mathbb{T}^2:\mathrm{Im} (\lambda) =2K'\}.\end{align} We will restrict ourselves to $\lambda\in\Gamma_1\cup\Gamma_2$ for now. Due to the condition \eqref{eq:L_decay} in \eqref{def:LaxU} we can define the following Jost solutions:
\begin{align}\label{asymp_jost}
    F_{\pm}( \lambda,x)  = \ee^{- \ii x w_3(\lambda)  \sigma_3}+ o(1), &&\mbox{at}&&  x\to \pm \infty &&\mbox{for}&& \lambda\in\Gamma_1\cup\Gamma_2.
\end{align}
The determinant of the above expression is given by
\begin{align}
\tr(U(\lambda,x))=0&&\Rightarrow&&\det(F_{\pm}(\lambda,x))=1, \label{eq:det-symmetry}
\end{align}
and symmetry properties of Jost solutions follow from the relations \eqref{conjugation-w} of functions $w_1(\lambda)$, $w_2(\lambda)$, $w_3(\lambda)$:
\begin{align}
\overline{U(\lambda,x)}=\sigma_2U(\overline{\lambda},x)\sigma_2&&\Rightarrow&&\overline{F_{\pm}(\lambda,x)}=\sigma_2F_{\pm}(\overline{\lambda},x)\sigma_2. \label{eq:conjugation-symmetry}
\end{align}
Since Jost solutions both solve the differential equation \eqref{def:LaxU}, they are related through the scattering matrix
\begin{align}\label{jump_jost}
    F_{+}(\lambda,x) = F_{-}(\lambda,x) S(\lambda)&&\lambda\in \Gamma_1\cup\Gamma_2.
    \end{align}
\noindent Furthermore, the identities \eqref{eq:det-symmetry}, \eqref{eq:conjugation-symmetry} imply the following structure of the scattering matrix
\begin{align}\label{jump_jost_scattering}
S(\lambda)=\left(\begin{array}{cc}
       {\sf a}(\lambda)  & -\overline{{\sf b}({\lambda})} \\
       {\sf b}(\lambda)  & \overline{{\sf a}({\lambda})}
    \end{array} \right), && |{\sf a}(\lambda)|^2 + |{\sf b}(\lambda)|^2=1.
\end{align}
Here we used the fact that Jost solutions are periodic with respect to the shift by $4K$, $4\ii K'$ and that $\overline{\lambda}=\lambda-4\ii K'$ for $\lambda\in \Gamma_2$.

For further analysis of the properties of the Jost solutions, we 
define the following functions
\begin{align}
    \Upsilon_{\pm}(\lambda,x) &= F_{\pm}(\lambda,x) 
 \ee^{\ii x w_3(\lambda)  \sigma_3} =: \left(
    {\upsilon_{\pm}^{(1)}(\lambda,x)},
    \upsilon_{\pm}^{(2)}(\lambda,x)\right).
\end{align}
Moreover, since $F_{\pm}(\lambda,x)$ solves \eqref{def:LaxU}, $\Upsilon_{\pm}(\lambda,x)$ solves the ODE
\begin{align}\label{eq:Ydeq}
    {\partial_x \Upsilon_{\pm}}(\lambda,x) =  U(\lambda,x) \Upsilon_{\pm}(\lambda,x) +\ii w_3(\lambda) \Upsilon_{\pm}(\lambda,x) \sigma_3,
\end{align}
and the asymptotic conditions \eqref{asymp_jost} imply that
\begin{align}\label{eq:yp_asym}
&&\upsilon_\pm^{(1)}(\lambda,x)\simeq \left( \begin{array}{c}
       1 \\
         0 
    \end{array}\right),\quad x\to \pm\infty&& \upsilon_\pm^{(2)}(\lambda,x)\simeq \left( \begin{array}{c}
        0  \\
        1 
    \end{array}\right),\quad x\to \pm\infty.
\end{align}
Using observations above, we can obtain the alternative characterization of $\Upsilon_{\pm}(\lambda,x)$.

\begin{proposition}[\cite{rodin1984}]
The solutions $\widehat{\upsilon}_{\pm}^{(j)}(\lambda,x)$  of the integral equations 
\begin{align}
    \widehat{\upsilon}_{\pm}^{(1)}(\lambda,x) &= \left( \begin{array}{c}
        1 \\
         0 
    \end{array}\right) +  \int^{x}_{\pm \infty} \ee^{\ii (\mathbb{1}-\sigma_3) w_3(\lambda) (x-\tau)}\left(U(\lambda,\tau) +\ii w_3(\lambda) \sigma_3 \right) \widehat{\upsilon}_{\pm}^{(1)}(\lambda,\tau) d\tau,\label{eq:int_ypm1}\\
     \widehat{\upsilon}_{\pm}^{(2)}(\lambda,x) &= \left( \begin{array}{c}
         0  \\
       1
    \end{array}\right) + \int^{x}_{\pm \infty} \ee^{-\ii (\mathbb{1}+\sigma_3) w_3(\lambda) (x-\tau)} \left(U(\lambda,\tau) +\ii w_3(\lambda) \sigma_3 \right) \widehat{\upsilon}_{\pm}^{(2)}(\lambda,\tau) d\tau,\label{eq:int_ypm2}
\end{align}
solve the differential equation \eqref{eq:Ydeq} and satisfies asymptotic conditions
\begin{align}\label{eq:hat_yp_asym}
&&\widehat{\upsilon}_\pm^{(1)}(\lambda,x)\simeq \left( \begin{array}{c}
       1 \\
         0 
    \end{array}\right),\quad x\to \pm\infty&& \widehat{\upsilon}_\pm^{(2)}(\lambda,x)\simeq \left( \begin{array}{c}
        0  \\
        1 
    \end{array}\right),\quad x\to \pm\infty.
\end{align}
\end{proposition}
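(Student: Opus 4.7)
The plan is to solve \eqref{eq:int_ypm1}--\eqref{eq:int_ypm2} by Neumann iteration as Volterra integral equations, then recover the ODE \eqref{eq:Ydeq} by differentiating the integral representation, and finally deduce \eqref{eq:hat_yp_asym} from the collapse of the domain of integration at $x = \pm\infty$. The key preliminary observation is that the coefficient combination appearing in the kernel,
\begin{align*}
M(\lambda,\tau) := U(\lambda,\tau) + \ii w_3(\lambda)\sigma_3 = -\ii w_1(\lambda) L_1(\tau)\sigma_1 - \ii w_2(\lambda) L_2(\tau)\sigma_2 + \ii w_3(\lambda)(1-L_3(\tau))\sigma_3,
\end{align*}
is Schwartz in $\tau$ by the standing assumption on the initial data, while on $\Gamma_1\cup\Gamma_2$ the function $w_3(\lambda)$ is real-valued, so that the matrix exponentials $\ee^{\pm \ii(\mathbb{1}\mp\sigma_3)w_3(\lambda)(x-\tau)}$ have entries of modulus at most one (one diagonal entry equals $1$, the other is a unimodular oscillation).

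Given these two facts, the integral operators on the right of \eqref{eq:int_ypm1}--\eqref{eq:int_ypm2} act on bounded continuous vector functions on the appropriate half-line, and the $n$-th iterate is bounded in operator norm by $\frac{1}{n!}\bigl(\int \|M(\lambda,\tau)\|\, d\tau\bigr)^n$ for each $\lambda \in \Gamma_1\cup\Gamma_2$ away from the poles of $w_1,w_2$; the Neumann series therefore converges absolutely to a unique bounded continuous solution $\widehat{\upsilon}_\pm^{(j)}(\lambda,x)$. Differentiating this solution in $x$, the boundary contribution is $M(\lambda,x)\widehat{\upsilon}_\pm^{(j)}(x)$, and differentiating the exponential under the integral, then re-applying the integral equation, produces $\pm \ii (\mathbb{1}\mp\sigma_3) w_3(\lambda)\bigl(\widehat{\upsilon}_\pm^{(j)}(x) - c_j\bigr)$, where $c_1 = (1,0)^T$ and $c_2 = (0,1)^T$ are the prescribed constant columns. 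Because $(\mathbb{1} - \sigma_3)c_1 = 0$ and $(\mathbb{1} + \sigma_3)c_2 = 0$, the $c_j$ piece drops out, and adding the $\ii w_3(\lambda)\sigma_3$ already present in $M$ to the remaining $\pm \ii w_3(\lambda)(\mathbb{1}\mp\sigma_3)$ collapses everything to $U(\lambda,x)\widehat{\upsilon}_\pm^{(j)} \pm \ii w_3(\lambda)\widehat{\upsilon}_\pm^{(j)}$, which is precisely column $j$ of \eqref{eq:Ydeq}, using that multiplication of $\Upsilon$ by $\sigma_3$ on the right flips the sign of the second column.

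The asymptotic conditions \eqref{eq:hat_yp_asym} then follow by dominated convergence: as $x \to \pm\infty$ the length of the integration interval in \eqref{eq:int_ypm1}--\eqref{eq:int_ypm2} tends to zero, the bounded solution times the Schwartz coefficient $M(\lambda,\cdot)$ is controlled by an integrable tail, and the integral term vanishes in the limit, leaving the constant column $c_j$. The whole argument is standard Jost-solution bookkeeping; the only mild subtlety, not strictly needed for the proposition as stated here but important for the rest of the section, is that $w_1,w_2$ have poles on $\Gamma_1\cup\Gamma_2$ at the zeros of $\sn(\lambda,k)$, where the Volterra norm $\int\|M(\lambda,\cdot)\|d\tau$ degenerates; the behaviour of $\widehat{\upsilon}_\pm^{(j)}$ near those points is where one has to pay attention when later studying the analyticity and decay properties of the reflection coefficient.
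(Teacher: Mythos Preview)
Your proof is correct and follows essentially the same route as the paper: differentiate the integral equation, replace the resulting integral by the equation itself, and use the key cancellations $(\mathbb{1}-\sigma_3)c_1=0$, $(\mathbb{1}+\sigma_3)c_2=0$ to recover the column-by-column form of \eqref{eq:Ydeq}. The paper's own proof does exactly this computation and then declares the asymptotic conditions ``trivial''; your Neumann-series existence argument and the dominated-convergence justification of \eqref{eq:hat_yp_asym} are extras that the paper does not include here (the iteration is carried out only later, in the proof of Proposition~\ref{Prop:Rodin}).
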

\begin{proof}
 We compute the derivative
 \begin{align*}
     \partial_x   \widehat{\upsilon}_{\pm}^{(1)}(\lambda,x)&=\left(U(\lambda,\tau) +\ii w_3(\lambda) \sigma_3 \right) \widehat{\upsilon}_{\pm}^{(1)}(\lambda,\tau)\\ &+ \ii (\mathbb{1}-\sigma_3) w_3(\lambda)\int^{x}_{\pm \infty} \ee^{\ii (\mathbb{1}-\sigma_3) w_3(\lambda) (x-\tau)}\left(U(\lambda,\tau) +\ii w_3(\lambda) \sigma_3 \right) \widehat{\upsilon}_{\pm}^{(1)}(\lambda,\tau) d\tau.
 \end{align*}
Replacing the integral term using 
\eqref{eq:int_ypm1} we get
\begin{align*}
    \partial_x   \widehat{\upsilon}_{\pm}^{(1)}(\lambda,x)&=\left(U(\lambda,\tau) +\ii w_3(\lambda) \sigma_3 \right) \widehat{\upsilon}_{\pm}^{(1)}(\lambda,\tau)+ \ii (\mathbb{1}-\sigma_3) w_3(\lambda)\left( \widehat{\upsilon}_{\pm}^{(1)}(\lambda,x) - \left( \begin{array}{c}
        1 \\
         0 
\end{array}\right)\right)\\
&=\left(U(\lambda,\tau) +\ii w_3(\lambda) \right) \widehat{\upsilon}_{\pm}^{(1)}(\lambda,\tau).
\end{align*}
Similarly we note that
\begin{align*}
    \partial_x   \widehat{\upsilon}_{\pm}^{(2)}(\lambda,x)=\left(U(\lambda,\tau) -\ii w_3(\lambda) \right) \widehat{\upsilon}_{\pm}^{(2)}(\lambda,\tau)
\end{align*}
which gives \eqref{eq:Ydeq}. The check of asymptotic conditions \eqref{eq:hat_yp_asym} is trivial.
\end{proof}

As a result, we see that functions ${\upsilon}_{\pm}^{(j)}(\lambda,x)$ satisfy not only differential equation \eqref{eq:Ydeq}, but also the integral equations \eqref{eq:int_ypm1}, \eqref{eq:int_ypm2}. Let us now denote
\begin{align}
    \Omega_{+} =  \left\lbrace \lambda: 0 \leq \Im (\lambda )\leq 2K'; |\Re \lambda| \leq 2K \right\rbrace,&&\Omega_{-} = \left\lbrace \lambda: -2K' \leq \Im (\lambda) \leq 0; |\Re \lambda| \leq 2K \right\rbrace.
\end{align}
In the next step, we are willing to extend the function $\Upsilon_{\pm}(\lambda,x)$ in the domains $\Omega_{\pm}$. Using properties \eqref{w3-complexplane}, \eqref{eq:special values}, and Figure \ref{fig:foobar}, we note that the elliptic function $\Im(w_3(\lambda))$ satisfies the following inequalities
\begin{equation} \label{eq:theta-lambda-map}
   -\infty<\Im (w_3(\lambda))\leq 0 \quad\mbox{for}\quad\lambda \in\Omega_{+}; \qquad
   0\leq \Im (w_3(\lambda))< \infty \quad\mbox{for}\quad \lambda\in\Omega_{-}.
\end{equation}
Consequently, we get the following result. Notice that columns of $\Upsilon_{\pm}(\lambda,x)$ are analytic in opposite domains.
\begin{proposition}[\cite{rodin1984}]\label{Prop:Rodin}
The functions $\upsilon_{\pm}^{(1)}(\lambda,x)$, $\upsilon_{\mp}^{(2)}(\lambda,x)$ are analytic in the domains $\Omega_{\pm}$  and bounded in the domains $\overline{\Omega}_{\pm}$ respectively. In addition \\$\upsilon_{\pm}^{(1)}(\lambda,x)$, $\upsilon_{\pm}^{(2)}(\lambda,x)$ $\in C^{\infty}(\Gamma_1)$.
\end{proposition}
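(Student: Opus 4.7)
The strategy is a standard Picard iteration on the Volterra integral equations \eqref{eq:int_ypm1}--\eqref{eq:int_ypm2}, with the sign information \eqref{eq:theta-lambda-map} ensuring that the matrix-valued exponential kernels are uniformly bounded on the appropriate half of the torus. Specifically, for $\widehat{\upsilon}_+^{(1)}$ the non-trivial entry of $\ee^{\ii(\mathbb{1}-\sigma_3)w_3(\lambda)(x-\tau)}$ is $\ee^{2\ii w_3(\lambda)(x-\tau)}$, whose modulus equals $\ee^{-2\Im w_3(\lambda)(x-\tau)}$; since $\tau\ge x$ in \eqref{eq:int_ypm1} and $\Im w_3(\lambda)\le 0$ on $\Omega_+$ by \eqref{eq:theta-lambda-map}, this modulus is at most $1$. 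The same computation carried out on the remaining three integral equations produces precisely the pairings $\upsilon_\pm^{(1)}\leftrightarrow\Omega_\pm$ and $\upsilon_\mp^{(2)}\leftrightarrow\Omega_\pm$ asserted in the statement.

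With this kernel bound together with the Schwartz decay of $L_1,L_2,L_3-1$, the integrals $\int_{\pm\infty}^x\cdots d\tau$ are absolutely convergent, and the Neumann series for each $\widehat{\upsilon}_\pm^{(j)}$ converges absolutely and uniformly on compact subsets of $\overline{\Omega}_\pm$ away from the lattice poles of the $w_j(\lambda)$. This yields a bounded solution of \eqref{eq:int_ypm1}--\eqref{eq:int_ypm2} on that closed domain. Each iterate is analytic in $\lambda$ on the open domain, because the integrand is analytic in $\lambda$ and the $\tau$-integration preserves analyticity (Morera together with dominated convergence); the uniform limit is therefore analytic on the interior and continuous up to the boundary. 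Uniqueness of the Volterra solution identifies $\widehat{\upsilon}_\pm^{(j)}$ with $\upsilon_\pm^{(j)}$, completing the first part of the claim.

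For the $C^\infty(\Gamma_1)$ assertion, the idea is to differentiate \eqref{eq:int_ypm1}--\eqref{eq:int_ypm2} in $\lambda$: on $\Gamma_1$ the function $w_3(\lambda)$ is real, so the exponential kernel has modulus exactly one, and each $\lambda$-derivative brings down a factor polynomial in $(x-\tau)$ times an analytic function of $\lambda$. All such factors are absolutely integrable against Schwartz $L(x)$, so the Picard argument applies at every order and yields smoothness to all orders in $\lambda$ on $\Gamma_1$. The main technical nuisance I anticipate is that the $w_j(\lambda)$ are singular at $\lambda=0$ on $\Gamma_1$ (and at the other corners of the fundamental rectangle), so ``bounded in $\overline{\Omega}_\pm$'' must be understood uniformly on compact subsets avoiding these isolated points; apart from this caveat the argument is a direct transcription of the classical Jost-function construction for the rational spectral parameter case.
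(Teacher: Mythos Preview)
Your Picard iteration away from the lattice points $\lambda=0,2K,2\ii K',2K+2\ii K'$ matches the paper's first step, and your $C^\infty(\Gamma_1)$ argument is fine. The genuine gap is that you treat the boundedness at $\lambda=0$ as a removable ``caveat'', whereas this is in fact the entire substance of the proposition and of the paper's proof. The claim is boundedness on all of $\overline{\Omega}_\pm$, including $\lambda=0$; this is essential downstream, since the RHP solution is evaluated at $\lambda=0$ in \eqref{sol_LL_RHP_init}.

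Concretely, your Neumann-series estimate cannot close near $\lambda=0$: the integrand $U(\lambda,\tau)+\ii w_3(\lambda)\sigma_3=-\ii\bigl(L_1w_1\sigma_1+L_2w_2\sigma_2+(L_3-1)w_3\sigma_3\bigr)$ has each $w_j(\lambda)\sim\rho/\lambda$, so the $L^1$-norm $\sigma_-(\infty)=\int_{\mathbb R}|U+\ii w_3\sigma_3|\,d\tau$ blows up like $1/|\lambda|$, and the series bound $e^{\sigma_-(\infty)}$ gives no control. The paper handles this by an explicit reduction to the NLS Lax pair: one replaces $U$ by its leading singular part $\accentset{\circ}{U}(\lambda,x)=-\ii\rho\lambda^{-1}\sum L_j\sigma_j$, diagonalises $\sum L_j\sigma_j$ via a carefully constructed gauge $T(x)$ (with a partition-of-unity argument to avoid the points where $L_3=\pm1$), thereby transforming to a Zakharov--Shabat system in the spectral variable $1/\lambda$ with Schwartz potentials. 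Standard NLS theory then gives bounded Jost solutions $\accentset{\circ}{v}_-(\lambda,x)$ up to $\lambda=0$. Finally the ratio $\eta_-^{(1)}=(\accentset{\circ}{v}_-)^{-1}\upsilon_-^{(1)}$ satisfies an ODE whose coefficient $(\accentset{\circ}{v}_-)^{-1}(U-\accentset{\circ}{U}+\ii w_3-\ii\rho/\lambda)\accentset{\circ}{v}_-$ is regular at $\lambda=0$, so ordinary iteration applies to $\eta_-^{(1)}$ and boundedness of $\upsilon_-^{(1)}$ at $\lambda=0$ follows. Without this (or an equivalent) construction, the proposition is not proved.
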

\noindent For reader's convenience we  present the proof of this Proposition in appendix \ref{prop_proof}. 

 The integral equations \eqref{eq:int_ypm1}, \eqref{eq:int_ypm2} further imply the following result.
\begin{proposition}\label{prop:int_ab}
The functions $\sf{a}(\lambda)$, $\sf{b}(\lambda)$ admit two alternative expressions.
\begin{enumerate}
\item They can be written as the integrals
\begin{align}\label{eq:b_integral}
\left(\begin{array}{c}
    {{\sf a}({\lambda})}   \\
      {\sf b}(\lambda)
    \end{array} \right)= \left( \begin{array}{c}
         1  \\
        0
    \end{array}\right) +  \int_{-\infty}^{ \infty} \ee^{-\ii (\mathbb{1}-\sigma_3) w_3(\lambda) \tau}  \left(U(\lambda,\tau) +\ii w_3(\lambda) \sigma_3 \right)  \upsilon_{-}^{(1)}(\lambda,\tau) d\tau.
\end{align}
\item They can also be expressed as the following determinants 
\begin{align}
     {\sf a(\lambda)}=\det(\upsilon_{+}^{(1)}(\lambda,x),\upsilon_{-}^{(2)}(\lambda,x)),\label{eq:a_det_form}\\
     {\sf b(\lambda)}= \ee^{2\ii w_3(\lambda) x} \det(\upsilon_{-}^{(1)}(\lambda,x),\upsilon_{+}^{(1)}(\lambda,x)).\label{eq:b_det_form}
\end{align}
\end{enumerate}
\end{proposition}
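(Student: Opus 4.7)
The plan is to derive both representations of ${\sf a}(\lambda)$ and ${\sf b}(\lambda)$ from the scattering relation \eqref{jump_jost}, combined with the Volterra integral equations \eqref{eq:int_ypm1}--\eqref{eq:int_ypm2} for part (1) and with the Wronskian identity $\det F_\pm(\lambda,x)=1$ from \eqref{eq:det-symmetry} for part (2). Both parts are essentially algebraic once the right quantities are paired up, so the work reduces to bookkeeping of exponential factors.

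For the integral formula \eqref{eq:b_integral}, I would start from the Volterra equation \eqref{eq:int_ypm1} satisfied by $\upsilon_-^{(1)}(\lambda,\cdot)$ and multiply on the left by the diagonal matrix $\ee^{-\ii(\mathbb{1}-\sigma_3) w_3(\lambda) x}$. This leaves the inhomogeneous term $(1,0)^T$ unchanged (because its bottom entry vanishes) and converts the kernel $\ee^{\ii(\mathbb{1}-\sigma_3) w_3(\lambda)(x-\tau)}$ into the $x$-independent expression $\ee^{-\ii(\mathbb{1}-\sigma_3) w_3(\lambda)\tau}$; passing to the limit $x\to+\infty$ then extends the upper limit of integration to $+\infty$. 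Independently, the scattering relation \eqref{jump_jost} together with the normalization \eqref{eq:yp_asym} of $\upsilon_+^{(1,2)}$ at $+\infty$ gives an explicit value for the same conjugated limit in terms of ${\sf a}(\lambda)$ and ${\sf b}(\lambda)$. Equating these two evaluations of the limit produces \eqref{eq:b_integral}.

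For the determinant formulas, I would work at the level of the original Jost matrices $F_\pm(\lambda,x)$. Since $\tr U(\lambda,x)=0$ implies $\det F_-(\lambda,x)=1$, the columns of $F_-(\lambda,x)$ form a basis of the solution space of \eqref{def:LaxU} with unit Wronskian. Taking the determinant of the first-column identity $F_+^{(1)} = {\sf a} F_-^{(1)} + {\sf b} F_-^{(2)}$ (read off from \eqref{jump_jost}) against $F_-^{(2)}$ extracts ${\sf a}(\lambda) = \det(F_+^{(1)},F_-^{(2)})$, and against $F_-^{(1)}$ it extracts ${\sf b}(\lambda) = \det(F_-^{(1)},F_+^{(1)})$. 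Substituting $F_\pm^{(1)} = \ee^{-\ii x w_3(\lambda)}\upsilon_\pm^{(1)}$ and $F_-^{(2)} = \ee^{\ii x w_3(\lambda)}\upsilon_-^{(2)}$, the opposite-sign exponentials cancel in the first determinant to give \eqref{eq:a_det_form}, while the two like-sign exponentials combine in the second determinant to produce the factor $\ee^{2\ii w_3(\lambda)x}$ in \eqref{eq:b_det_form}. That both right hand sides are $x$-independent (modulo the explicit exponential factor) is automatic, because the Wronskian of any two solutions of \eqref{def:LaxU} is constant in $x$.

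The only real obstacle is the convergence of the improper integral in part (1). This rests on the decay of the perturbation $U(\lambda,\tau) + \ii w_3(\lambda)\sigma_3$ at infinity, which follows from $L_1,L_2,L_3-1\in\mathcal{S}(\mathbb{R})$, together with the uniform boundedness of $\upsilon_-^{(1)}(\lambda,\cdot)$ guaranteed by Proposition \ref{Prop:Rodin}. These two ingredients make a standard dominated convergence argument applicable, after which everything reduces to the bookkeeping described above.
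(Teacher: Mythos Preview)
Your proposal is correct and follows essentially the same route as the paper. For part (1) the paper introduces the auxiliary vectors $\phi_-^{(1)}(\lambda,x)=\ee^{\ii x w_3(\lambda)\sigma_3}F_-^{(1)}(\lambda,x)$, which is exactly your conjugated quantity $\ee^{-\ii(\mathbb{1}-\sigma_3)w_3(\lambda)x}\upsilon_-^{(1)}(\lambda,x)$, writes the corresponding integral equation and takes $x\to+\infty$; for part (2) the paper stays in the $\upsilon_\pm$ variables via the relation $\upsilon_+^{(1)}={\sf a}\,\upsilon_-^{(1)}+{\sf b}\,\ee^{2\ii w_3 x}\upsilon_-^{(2)}$ together with $\det\Upsilon_\pm=1$, which is your Wronskian argument translated from $F_\pm$ to $\upsilon_\pm$. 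The only addition in your write-up is the explicit dominated convergence justification for the improper integral, which the paper leaves implicit.
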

 We refer the reader to appendix \ref{prop_proof} for the proof.

With the above expressions we obtain the following result for the analyticity of the coefficients.
\begin{proposition}\label{prop:a_b_smooth} The coefficient 
${\sf b}(\lambda)\in C^{\infty}(\Gamma_1\cup \Gamma_2)$, and $\sfa(\lambda)$ is analytic in $\Omega_+$.  Moreover, $\frac{d^n{\sfb}(\lambda)}{d\lambda^n} = \mathcal{O} (\lambda^m), \quad \lambda \to 0, \quad \forall n, m \in {\mathbb{N}}.$
\end{proposition}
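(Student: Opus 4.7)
The plan is to leverage the two representations of Proposition \ref{prop:int_ab}: the determinantal identity \eqref{eq:a_det_form} gives the analyticity of $\sfa(\lambda)$ on $\Omega_+$, while the integral representation \eqref{eq:b_integral} drives both the $C^\infty$ smoothness of $\sfb$ and the rapid vanishing of $\sfb$ and all of its $\lambda$-derivatives at $\lambda = 0$.

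Analyticity of $\sfa$ is immediate from \eqref{eq:a_det_form}: by Proposition \ref{Prop:Rodin}, both $\upsilon_+^{(1)}(\lambda,x)$ and $\upsilon_-^{(2)}(\lambda,x)$ are analytic in $\Omega_+$ for each fixed $x\in\mathbb{R}$, so the determinant of their columns is holomorphic in $\Omega_+$. For the $C^\infty$ statement, on the part of $\Gamma_1\cup\Gamma_2$ that avoids the poles of $w_1,w_2,w_3$, the integrand of \eqref{eq:b_integral} is smooth in $\lambda$ (by Proposition \ref{Prop:Rodin}) and Schwartz in $\tau$ (since $L_1,L_2,L_3-1\in\mathcal{S}(\mathbb{R})$ and $\upsilon_-^{(1)}(\lambda,\tau)$ is bounded in $\tau$), so differentiation under the integral sign is legitimate and yields $\sfb\in C^\infty$ on this punctured region. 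It then remains only to extend the smoothness across the pole points, in particular $\lambda=0$; this is subsumed in the infinite-order vanishing estimate.

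The main obstacle is therefore the bound $d^n\sfb/d\lambda^n=\mathcal{O}(\lambda^m)$ as $\lambda\to 0$, for arbitrary $n,m$. The key observation is that the second row of \eqref{eq:b_integral} carries the oscillatory factor $e^{-2i w_3(\lambda)\tau}$, and $w_3(\lambda)\sim\rho/\lambda$ as $\lambda\to 0$, so this factor oscillates arbitrarily rapidly in $\tau$. Iterated integration by parts in $\tau$ yields, for every $M\in\mathbb{N}$,
\[
\sfb(\lambda) = \frac{1}{(2iw_3(\lambda))^M}\int_{-\infty}^{\infty} e^{-2iw_3(\lambda)\tau}\,\partial_\tau^M f(\lambda,\tau)\,d\tau,\qquad f(\lambda,\tau) := \bigl[(U(\lambda,\tau)+iw_3(\lambda)\sigma_3)\upsilon_-^{(1)}(\lambda,\tau)\bigr]_2.
\]
The boundary terms vanish because $U(\lambda,\tau)+iw_3(\lambda)\sigma_3\to 0$ as $|\tau|\to\infty$ (this is where $L_1,L_2,L_3-1\in\mathcal{S}(\mathbb{R})$ enters), while $\upsilon_-^{(1)}(\lambda,\cdot)$ stays bounded; every $\tau$-derivative of $f$ remains Schwartz in $\tau$ uniformly for $\lambda$ near $0$, so the integral is bounded and $|\sfb(\lambda)|\leq C_M|w_3(\lambda)|^{-M}\leq C'_M|\lambda|^M$.

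For the higher $\lambda$-derivatives I would differentiate this identity $n$ times: the dominant source of $\lambda$-singularity is $\partial_\lambda e^{-2iw_3(\lambda)\tau}=-2i\tau w_3'(\lambda)e^{-2iw_3(\lambda)\tau}$, contributing a factor of order $\tau/\lambda^2$ per differentiation. After $n$ differentiations the accumulated polynomial $\tau^n$ is absorbed by the Schwartz decay of $\partial_\tau^M f$, while the prefactor still contributes $|w_3(\lambda)|^{-M}=O(|\lambda|^M)$. Choosing $M\geq 2n+k$ produces $\partial_\lambda^n\sfb(\lambda)=\mathcal{O}(\lambda^k)$, and since $k$ is arbitrary the proposition follows; incidentally this also extends $\sfb$ as a $C^\infty$ function across $\lambda=0$ and, by the same argument applied at each pole of $w_1,w_2,w_3$, across the remaining singular points on $\Gamma_1\cup\Gamma_2$.
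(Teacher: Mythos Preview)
Your overall strategy coincides with the paper's: use the determinantal formula \eqref{eq:a_det_form} for the analyticity of $\sfa$, and exploit the oscillatory integral \eqref{eq:b_integral} via repeated integration by parts in $\tau$ to force the infinite-order vanishing of $\sfb$ at $\lambda=0$. The gap is in the step where you assert that ``every $\tau$-derivative of $f$ remains Schwartz in $\tau$ uniformly for $\lambda$ near $0$.'' This is exactly the point that requires work, and as written it is not justified. Differentiating $f(\lambda,\tau)=[(U+\ii w_3\sigma_3)\upsilon_-^{(1)}]_2$ in $\tau$ forces you, through the ODE $\partial_\tau\upsilon_-^{(1)}=(U+\ii w_3\mathbb{1})\upsilon_-^{(1)}$, to multiply by $U+\ii w_3\mathbb{1}$. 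But the $(2,2)$-entry of this matrix is $\ii w_3(\lambda)(1+L_3(\tau))$, which is $O(1/\lambda)$ and does \emph{not} decay as $|\tau|\to\infty$. Hence each $\tau$-derivative of $\upsilon_-^{(1)}$ (and therefore of $f$) can cost a power of $1/\lambda$, and this loss competes with---and in general cancels---the gain $w_3(\lambda)^{-M}=O(\lambda^M)$ from the $M$-fold integration by parts. The same issue recurs when you differentiate in $\lambda$: $\partial_\lambda\upsilon_-^{(1)}$ satisfies an inhomogeneous equation whose forcing involves $\partial_\lambda U=O(1/\lambda^2)$.

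The paper's proof closes this gap not by a sharper estimate but by a structural rewriting: it invokes the NLS/Zakharov--Shabat gauge decomposition
\[
\upsilon_-^{(1)}(\lambda,\tau)=T(\tau)\,s(\lambda,\tau)\,\ee^{\ii\rho\tau\sigma_3/\lambda}\,\eta_-^{(1)}(\lambda,\tau)
\]
established in the proof of Proposition~\ref{Prop:Rodin}. In this factored form the singular $\lambda$-dependence is confined to the factor $s\,\ee^{\ii\rho\tau\sigma_3/\lambda}$, whose columns admit the Gelfand--Levitan--Marchenko representation $\displaystyle e_j+\int\ee^{\mp\ii\rho y/\lambda}\Gamma^{(j)}(\tau,\tau+y)\,dy$; differentiating in $\tau$ lands on the smooth kernel $\Gamma^{(j)}$ and never on the oscillation, so no power of $1/\lambda$ is lost. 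The remaining factors $T(\tau)$ and $\eta_-^{(1)}(\lambda,\tau)$ satisfy ODEs with coefficients that are \emph{bounded} at $\lambda=0$, so their $\tau$-derivatives are harmless as well. With this decomposition in hand the integration-by-parts argument you outline goes through verbatim. In short, the missing ingredient is precisely the content of the proof of Proposition~\ref{Prop:Rodin}; without it the uniform-in-$\lambda$ Schwartz control on $\partial_\tau^M f$ is unproven.
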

\noindent The fact that ${\sf b}(0)=0$ can be found in the literature, see \cite[\text{(3.14)-(3.17)}]{Sklyanin:121210} and \cite[\text{(3.2)-(3.3)}]{rodin1984}.  We refer the reader to appendix \ref{prop_proof} for the proof.

Besides properties \eqref{eq:det-symmetry}, \eqref{eq:conjugation-symmetry} we can also observe the following additional symmetry properties of the Jost functions
\begin{proposition}
    The function $F_\pm(\lambda,x)$ has the following periodicity properties:
    \begin{align}\label{symmetry1}
 \sigma_{3} F_\pm(\lambda + 2K,x) \sigma_{3} = F_\pm(\lambda,x), &&
 \sigma_{1} F_\pm(\lambda + 2\ii K',x) \sigma_{1} = F_\mp(\lambda,x). &&
\end{align}
\end{proposition}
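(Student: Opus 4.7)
My plan is to use the standard uniqueness argument for Jost solutions twice. If a matrix function $G(\lambda, x)$ solves $\partial_x G = U(\lambda, x) G$ and has the Jost asymptotic $G(\lambda, x) = e^{-\ii x w_3(\lambda)\sigma_3} + o(1)$ at $x\to +\infty$ (respectively $x\to -\infty$), then the Volterra integral equations \eqref{eq:int_ypm1}-\eqref{eq:int_ypm2} force $G = F_+$ (respectively $G = F_-$). So for each identity it suffices to check that (i) both sides satisfy the same $x$-ODE, and (ii) the two sides share asymptotic behavior at the relevant end.

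The ODE matching comes from two conjugation-symmetries of the Lax coefficient. Using the Jacobi periodicities recorded in appendix \ref{sec:jacobi-peoperties}, namely $\sn(\lambda+2K)=-\sn\lambda$, $\cn(\lambda+2K)=-\cn\lambda$, $\dn(\lambda+2K)=\dn\lambda$ and $\sn(\lambda+2\ii K')=\sn\lambda$, $\cn(\lambda+2\ii K')=-\cn\lambda$, $\dn(\lambda+2\ii K')=-\dn\lambda$, one obtains
\begin{align*}
w_1(\lambda+2K)&=-w_1,\ w_2(\lambda+2K)=-w_2,\ w_3(\lambda+2K)=w_3,\\
w_1(\lambda+2\ii K')&=w_1,\ w_2(\lambda+2\ii K')=-w_2,\ w_3(\lambda+2\ii K')=-w_3.
\end{align*}
Inserting these into $U(\lambda,x)=-\ii\sum_j\sigma_jL_j(x)w_j(\lambda)$ and using the Pauli identities $\sigma_3\sigma_1\sigma_3=-\sigma_1,\ \sigma_3\sigma_2\sigma_3=-\sigma_2,\ \sigma_3\sigma_3\sigma_3=\sigma_3$ (for the first shift) and $\sigma_1\sigma_1\sigma_1=\sigma_1,\ \sigma_1\sigma_2\sigma_1=-\sigma_2,\ \sigma_1\sigma_3\sigma_1=-\sigma_3$ (for the second) yields
\begin{align*}
\sigma_3\,U(\lambda+2K,x)\,\sigma_3 = U(\lambda,x),\qquad \sigma_1\,U(\lambda+2\ii K',x)\,\sigma_1 = U(\lambda,x).
\end{align*}
Consequently both $\sigma_3 F_\pm(\lambda+2K,x)\sigma_3$ and $\sigma_1 F_\pm(\lambda+2\ii K',x)\sigma_1$ solve $\partial_x\Psi=U(\lambda,x)\Psi$.

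For the asymptotic check, substitute the Jost condition \eqref{asymp_jost} into each candidate. In the $2K$-shift case, $w_3$ is invariant and $\sigma_3$ commutes with $e^{-\ii x w_3\sigma_3}$, so the asymptotic at $x\to\pm\infty$ is preserved, and the first identity follows immediately from Jost uniqueness. In the $2\ii K'$-shift case, $w_3(\lambda+2\ii K')=-w_3(\lambda)$, while $\sigma_1\sigma_3\sigma_1=-\sigma_3$ gives $\sigma_1 e^{-\ii x w_3(\lambda+2\ii K')\sigma_3}\sigma_1 = \sigma_1 e^{\ii x w_3(\lambda)\sigma_3}\sigma_1 = e^{-\ii x w_3(\lambda)\sigma_3}$, so the two sign changes conspire to reproduce the standard Jost exponential, and Jost uniqueness again closes the argument.

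The subtle point, and the main obstacle, is the bookkeeping of the $\pm$ label in the second identity: the flip $w_3\to -w_3$ under the $2\ii K'$-shift, combined with the swap of the two diagonal entries by $\sigma_1$-conjugation, must be tracked carefully together with the cross-domain analyticity structure of the two columns in $\Omega_\pm$ recorded in Proposition \ref{Prop:Rodin} in order to pin down whether $F_+$ or $F_-$ emerges on the right-hand side. Beyond this elementary sign-and-domain accounting, no genus-one-specific difficulty enters; the proof reduces to the short direct calculation sketched above.
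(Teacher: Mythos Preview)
Your approach is exactly the paper's approach, only spelled out: both arguments reduce to the conjugation identities $\sigma_3 U(\lambda+2K)\sigma_3=U(\lambda)$ and $\sigma_1 U(\lambda+2\ii K')\sigma_1=U(\lambda)$ followed by Jost uniqueness. Your derivation of these identities and of the asymptotic matching is correct.

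In fact your computation is \emph{complete}, and it proves
\[
\sigma_1 F_\pm(\lambda+2\ii K',x)\sigma_1=F_\pm(\lambda,x),
\]
i.e.\ \emph{without} the $\pm\mapsto\mp$ flip: as you showed, the two sign changes (from $w_3\to -w_3$ and from $\sigma_1\sigma_3\sigma_1=-\sigma_3$) cancel, so the conjugated function has the Jost asymptotic $e^{-\ii x w_3(\lambda)\sigma_3}$ at the \emph{same} end $x\to\pm\infty$, hence equals $F_\pm$. The $F_\mp$ in the displayed statement is a typo. You can confirm this independently: plugging $\sigma_1 F_\pm(\lambda+2\ii K')\sigma_1=F_\pm(\lambda)$ into $F_+=F_-S$ gives $S(\lambda)=\sigma_1 S(\lambda+2\ii K')\sigma_1$, which yields exactly the scattering relations $\sfa(\lambda+2\ii K')=\overline{\sfa(\bar\lambda)}$, $\sfb(\lambda+2\ii K')=-\overline{\sfb(\bar\lambda)}$ in \eqref{eq:b_symm}, whereas the stated $F_\mp$ version would give $\sfa(\lambda+2\ii K')=\sfa(\lambda)$ instead. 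Likewise, it is the unflipped $F$-symmetry together with $\sfa(\lambda+2\ii K')=\overline{\sfa(\bar\lambda)}$ that produces the (correct) flipped relation $\sigma_1 Y_\pm(\lambda+2\ii K')\sigma_1=Y_\mp(\lambda)$ in \eqref{symmetry}, because $Y_+$ and $Y_-$ mix columns of $F_+$ and $F_-$.

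So drop your final hedging paragraph: there is no residual ``sign-and-domain accounting'' to do, and the column-analyticity structure of Proposition~\ref{Prop:Rodin} plays no role here. Your direct asymptotic argument already settles the question.
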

\begin{proof}
    The relations \eqref{symm:w1}--\eqref{symm:w3}  imply that
    \begin{align}\label{symm:U}
       \sigma_3 U(\lambda+2K,x) \sigma_3=  U(\lambda,x), && \sigma_{1} U(\lambda + 2\ii K',x) \sigma_{1} = U(\lambda,x). &&
    \end{align}
    and formulae \eqref{symmetry1} follow. 
  \end{proof}
 Furthermore, equations \eqref{symmetry1} lead to the following    symmetry relations of the scattering data,
 \begin{align}
\label{eq:a_symm}\sfa(\lambda+2K)=\sfa(\lambda),&&\sfb(\lambda+2K)=-\sfb(\lambda),\\
    \sfa(\lambda+2\ii K')=\overline{\sfa(\bar{\lambda})},&& \sfb(\lambda+2\ii K')=-\overline{\sfb(\bar{\lambda})}.\label{eq:b_symm}
\end{align}

 We conclude this section by introducing   the {\it reflection} coefficient ${\sf r}(\lambda)$ as the ratio of ${\sf a}(\lambda), {\sf b}(\lambda)$:
\begin{align*}
{\sf r}(\lambda)=\frac{{\sf b}(\lambda)}{{\sf a}(\lambda)}.
\end{align*}
The above established properties of the scattering data ${\sf a}(\lambda)$ and ${\sf b}(\lambda)$,
(under the soliton free assumption, ${\sf a}(\lambda) \neq 0$ !) imply the following list of the properties
of ${\sf r}(\lambda)$.
\begin{enumerate}
\item ${\sf r}(\lambda) \in C^{\infty}(\Gamma_1\cup \Gamma_2)$\label{r_property_1}
\item $\sfr(\lambda +2K) = -\sfr(\lambda)$\label{r_property_3}
 \item $\sfr(\lambda +2\ii K') = -\overline{{\sfr}(\bar{\lambda})}$\label{r_property_4}
 \item ${\sf r}(0) =0$
\item $\frac{d^n{\sfr}(\lambda)}{d\lambda^n} = \mathcal{O} (\lambda^m), \quad \lambda \to 0, \quad \forall n, m \in { \mathbb{N}}.$\label{r_property_6}
\end{enumerate}
\begin{remark} The last property follows from the fact that near $\lambda =0$ the Lax pair  we are studying becomes effectively the Lax pair of the isotropic version of the LL equation which
is equivalent to the NLS equation and the behavior of the reflection coefficients $\sfr (\lambda)$ at $\lambda =0$
mimics the behavior of the NLS reflection coefficient $\sfr(\lambda)$ at $\lambda = \infty$ (see \cite{takhtajan1977integration,zakharov1976asymptotic,faddeev1987hamiltonian, rodin1984}
for more details).{We should also notice that these properties hold near $\lambda=2K, 2\ii K', 2K+2\ii K'$ due to shift properties \eqref{r_property_3}, \eqref{r_property_4}}.
\end{remark}

As it is usual in the scattering theory,  under our {\it standing assumption} that ${\sf a}(\lambda)$ has no zeros in $\Omega_+$ we have that the reflection coefficient ${\sf r}(\lambda)$ determine all the scattering data by the genus 1 version of the classical  dispersion relation (see \cite{Sklyanin:121210}),
\begin{equation}\label{ar}
 \sfa(\lambda) := \exp\left\lbrace -\frac{1}{2\pi \ii}\int_{-2K}^{0} \log \left( 1+ |{\sf r}(\eta)|^2 \right) \frac{w_3(\eta-\lambda)}{\rho} d\eta \right\rbrace,\quad \lambda\in\Omega_+.
\end{equation}

\section{Inverse  scattering problem }\label{sec:inverse}

With the definitions
\begin{align}
       &Y_{+}(\lambda,x) :=\left(
    \frac{\upsilon_{+}^{(1)}(\lambda,x)}{{\sf a}(\lambda)},
    \upsilon_{-}^{(2)}(\lambda,x)\right),\quad Y_{-}(\lambda,x) :=\left(
    {\upsilon_{-}^{(1)}(\lambda,x)}{},
    \frac{\upsilon_{+}^{(2)}(\lambda,x)}{\overline{{\sf a}(\bar{\lambda})}}\right),
    \end{align}
the functions $Y_{+} (\lambda,x)$ and $Y_{-} (\lambda,x)$ are analytic in $\Omega_+$ and $\Omega_-$,
respectively. Also, using relation \eqref{jump_jost} we can write $Y_\pm(\lambda,x)$ in terms of $\Upsilon_+(\lambda,x)$ as
    \begin{align}
    Y_{+}(\lambda,x) & = \Upsilon_{+}(\lambda,x) \left(\begin{array}{cc}
        \frac{1}{\sf a(\lambda)} & \overline{\sf b(\lambda)} \ee^{-2 \ii x w_3(\lambda)}  \\
        0 & {\sf a(\lambda)}
    \end{array} \right) \label{JostPsi1}\\
    Y_{-}(\lambda,x) & = \Upsilon_{+}(\lambda,x) \left(\begin{array}{cc}
        \overline{\sf a(\lambda)} & 0 \\
        -{\sf b(\lambda)} \ee^{2\ii x w_3(\lambda)} & \frac{1}{\sf a(\lambda)}
    \end{array} \right)\label{JostPsi2}
\end{align}
The symmetry properties of $F_\pm(\lambda,x)$, $\sfa(\lambda)$ and $\sfb(\lambda)$ then imply that
the function $Y_\pm(\lambda,x)$ has the following periodicity properties:
\begin{align}\label{symmetry}
    \begin{split}
  \sigma_{3} Y_\pm(\lambda + 2K,x) \sigma_{3} = Y_\pm&(\lambda,x), \qquad
 \sigma_{1} Y_\pm(\lambda + 2\ii K',x) \sigma_{1} = Y_\mp(\lambda,x).
    \end{split}
\end{align}

We are ready now to formulate  the Riemann-Hilbert problem (RHP) associated with the inverse scattering
problem for the linear equation (\ref{def:LaxU}). Define the piecewise analytic function 
\begin{align*}
Y(\lambda,x)=\begin{cases}
    Y_+(\lambda,x),\quad \lambda\in\Omega_+,\\
    Y_-(\lambda,x),\quad \lambda \in \Omega_-.
\end{cases}
\end{align*}

Then,   under the assumption $\sfa(\lambda)\neq 0$ for $\lambda\in \overline{\Omega_+}$, we obtain that the piecewise 
analytic function $Y(\lambda,x)$ is solving the following Riemann-Hilbert problem on the torus.
\begin{rhp}\label{rhp1}
\begin{enumerate}
    \item The function $Y(\lambda,x)$
    is bounded, doubly periodic, and piecewise analytic for $\lambda\in \mathbb{T}^2/(\Gamma_{1} \cup \Gamma_{2})$.
    Orientation of the contours $\Gamma_{1}, \Gamma_{2}$ is as specified in Figure \ref{Contour_1}.
\begin{figure}[H]
\centering
\includegraphics[trim={0cm 0cm 0cm 0cm},clip, width=7cm]{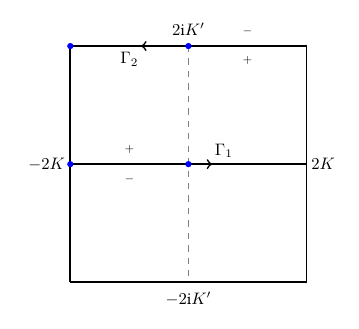}
\caption{Contours $\Gamma_1$ and $\Gamma_2$.} \label{Contour_1}
\end{figure}
    \item For $\lambda \in \Gamma_{1}, \Gamma_{2}$, the following jump condition holds 
    \begin{align}\label{rhp1:jump}
    Y_{+} (\lambda,x) = 
    Y_{-}(\lambda,x) G(\lambda,x), && G(\lambda,x)=\begin{pmatrix}
 1+|{\sf r}(\lambda)|^2 & \overline{{\sf r}  (\lambda)} \ee^{-2\ii x w_3(\lambda)} \\
         {\sf r} (\lambda)\ee^{2\ii x w_3(\lambda)} & 1 \end{pmatrix}. 
\end{align}
\item The function $Y(\lambda,x)$ satisfies the following symmetry conditions
  \begin{align}\label{symm_Y}
 \sigma_{3} Y(\lambda + 2K,x) \sigma_{3} = Y(\lambda,x), &&
 \sigma_{1} Y(\lambda + 2\ii K',x) \sigma_{1} = Y(\lambda,x).
\end{align}
\item Function $Y(\lambda,x)$ satisfies normalization condition
$\det(Y(\lambda,x))=1$.
\end{enumerate}
\end{rhp}
The inverse scattering problem for  the linear equation (\ref{def:LaxU}) consists in
the  analysis of the RHP (\ref {rhp1}). The latter includes, given the function $\sfr(\lambda)$
satisfying the properties \eqref{r_property_1}-\eqref{r_property_6}, to show that:\begin{enumerate}[label=(\alph*)]
    \item 
the RHP (\ref {rhp1}) is uniquely solvable for all real $x$, \item 
its solution is differentiable and yields the linear system (\ref{def:LaxU}) for the Schwartz class potentials  $L_1(x)$, $L_2(x)$, $L_3(x)-1$,
\item the reflection coefficient corresponding to these potentials coincides with the given  function $\sfr(\lambda)$.
\end{enumerate} 
We shall start with a simplest part -- uniqueness  of the solution of the problem   (\ref {rhp1}).
\begin{proposition}\label{prop:uniq_init}
Solution $Y(\lambda,x)$ of the RHP \ref{rhp1} is unique up to a sign.
\end{proposition}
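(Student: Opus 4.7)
The plan is the standard Riemann--Hilbert uniqueness argument, adapted to the torus. Assume $Y_1(\lambda,x)$ and $Y_2(\lambda,x)$ are two solutions of RHP \ref{rhp1}. Since $\det Y_2 = 1$ and $Y_2$ is bounded, the cofactor formula shows $Y_2^{-1}$ is bounded as well, so
\begin{equation*}
R(\lambda,x) := Y_1(\lambda,x)\, Y_2^{-1}(\lambda,x)
\end{equation*}
is well defined. I would first verify that $R$ has no jump across $\Gamma_1 \cup \Gamma_2$: from $Y_{j+} = Y_{j-} G$ for $j=1,2$ we get $R_+ = Y_{1-} G G^{-1} Y_{2-}^{-1} = R_-$. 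Hence $R$ is holomorphic on the complement of the jump contours with matching boundary values, so it extends to a bounded holomorphic function on the entire fundamental domain, doubly periodic with periods $4K$ and $4\ii K'$ inherited from $Y_1$ and $Y_2$.

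The next step is a Liouville-type argument: a bounded holomorphic function on a compact Riemann surface is constant. Viewed as a function on the torus $\mathbb{T}^2$, this forces $R(\lambda,x)$ to be independent of $\lambda$. I would then exploit the symmetry conditions \eqref{symm_Y} to identify this constant. From $\sigma_3 Y_j(\lambda + 2K,x) \sigma_3 = Y_j(\lambda,x)$ we obtain $R = \sigma_3 R \sigma_3$, so $R$ is diagonal. The second symmetry $\sigma_1 Y_j(\lambda + 2\ii K',x) \sigma_1 = Y_j(\lambda,x)$ yields $R = \sigma_1 R \sigma_1$, which combined with the diagonality forces $R$ to be a scalar multiple of the identity, $R = a I$. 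Taking determinants gives $a^2 = \det Y_1/\det Y_2 = 1$, whence $a = \pm 1$ and $Y_1 = \pm Y_2$, as claimed.

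The only point needing a little care is boundedness of $R$ at the four corner points $\lambda \in \{0, 2K, 2\ii K', 2K+2\ii K'\}$, where $w_3(\lambda)$ is singular and where the two jump contours meet; but the boundedness clause of the RHP applies everywhere on the fundamental domain, so Riemann's removable singularity theorem handles these isolated points. Beyond this, I do not anticipate serious obstacles: the argument is the standard genus-zero uniqueness proof, with Liouville on the torus replacing Liouville on the Riemann sphere, and the symmetry relations \eqref{symm_Y} playing the role ordinarily played by a normalization at infinity.
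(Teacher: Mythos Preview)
Your argument is correct and is essentially identical to the paper's proof: form the ratio $Y_1 Y_2^{-1}$, observe it is a bounded entire elliptic matrix function hence constant, use the symmetries \eqref{symm_Y} to force it to be scalar, and finish with the determinant condition to get $\pm 1$. The paper presents this in three sentences, while you have spelled out the no-jump computation and the removable-singularity remark at the corners, but the substance is the same.
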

\begin{proof}

Let us assume that there are two solutions of RHP \ref{rhp1} $Y(\lambda,x)$ and 
$\widetilde{Y}(\lambda,x)$. Then the function $$\widetilde{Y}(\lambda,x)\left(Y(\lambda,x)\right)^{-1}$$ is bounded elliptic matrix function, therefore constant. The symmetry conditions \eqref{symm_Y} further imply that it is scalar. The determinant condition implies that it can only be $\pm 1$.
\end{proof}

The nontrivial  question of solvability of the RHP  (\ref{rhp1}) has been positively resolved in  \cite{rodin1989}.
Therefore we only need to concentrate on the properties of the solution $Y(\lambda,x)$ as a function of $x$
and establish the validity of Theorem \ref{thm:initial_condition_asymptotic}.
To this end we need first to introduce the relevant apparatus of singular integral equations  associated with
our Riemann-Hilbert problem. 
\subsection{Singular integral equation for Riemann-Hilbert problem on the torus}\label{sec:singular_integral_equation}
To study the solution of the Riemann-Hilbert problems on the torus, we will need the singular integral equation. The first step to writing the singular integral equation is constructing an appropriate Cauchy kernel. In general, the Cauchy kernel $C(\mu, \lambda)\dd\mu$ is a meromorphic function in $\lambda$ and a one form in $\mu$, with residues $\pm 1$ at the poles of $\mu$, {\it i.e} it is an Abelian differential of the third kind. On the torus, such a function is not uniquely defined. For the purposes of this paper, we consider the Gusman-Rodin kernel \cite{Gusman, rodin1984} 
\begin{align}\label{def:Cauchy}
     C(\mu, \lambda) := \zeta(\mu- \lambda) - \zeta(\mu- \ii K') +\zeta(\lambda- K- \ii K')+ \zeta(K), 
\end{align}
where $\zeta(.)$ is the Weierstrass $\zeta$-function. It is important to note however, that the asymptotic result is of course independent of the choice of the kernel.
The Cauchy kernel above has the following properties.
\begin{itemize}
    \item The periodicity properties of the function $\zeta(.)$ \eqref{per:zeta1}--\eqref{per:zeta3} imply that
    \begin{align}\label{per:Cauchy}
        C(\mu+4K, \lambda)= C(\mu, \lambda+4K) = C(\mu+4\ii K',\lambda) =C(\mu,\lambda+4\ii K') = C(\mu,\lambda).
    \end{align}
     \item It has poles at $\mu=\lambda$,  $\mu=\ii K'$, $\lambda=K+\ii K'$ with residues $1$,$-1$, and $1$ respectively.
          \item It has zeros for $\lambda=\ii K'$, $\mu=K+\ii K'$.
\end{itemize}
\begin{remark}\label{rem:cauchy_kernel}
    We could compare the Cauchy kernel \eqref{def:Cauchy} with the kernel that appeared in \cite[(2.2)]{bertola2021pade}. They match after identification $z=\lambda-\ii K’, w=\mu-\ii K’, a=K$. The shift is justified by the fact that we would like to separate the singularities of the Cauchy kernel and the singularities of function $w_3(\lambda)$ which lie in $\lambda=0,2K,2\ii K',2K+2\ii K'$ and appear in the jump matrix $G(\lambda,x)$ of RHP \ref{rhp1}. Other than that our choice is arbitrary and follows \cite{rodin1984}.
\end{remark}

\noindent Consider the following analog of RHP \ref{rhp1} with normalization at $\lambda=\ii K'$.
\begin{rhp}\label{rhp:generic1}
\begin{enumerate}
    \item The function $\Phi(\lambda,x)$
    is bounded and piecewise analytic for $\lambda\in \mathbb{T}^2/(\Gamma_1\cup \Gamma_2)$.
    \item For $\lambda \in \Gamma_1\cup\Gamma_2$, the following jump condition holds
    \begin{align}\label{rhpgeneric:jump}
   \Phi_{+} (\lambda,x) = 
    \Phi_{-}(\lambda,x)G(\lambda,x).
    \end{align}
\item We have the normalization $\Phi(\ii K',x)=\mathbb{1}$.
\end{enumerate}
\end{rhp}
According to \cite{rodin1989} the RHP \ref{rhp:generic1} has solution $\Phi(\lambda,x)$ under the soliton - free condition $\sfa(\lambda)\neq 0$. We want to get singular integral representation for the solution.

With the Cauchy kernel \eqref{def:Cauchy}, we associate the following
equation to RHP \ref{rhp:generic1}.
\begin{align}\label{def:W}
    \chi(\lambda,x) = \mathbb{1} + \frac{1}{2\pi \ii } \int_{\Gamma_1\cup \Gamma_2} \chi(\mu,x) \left( G(\mu,x)- \mathbb{1}\right) C(\mu, \lambda-\ii 0) d\mu.
\end{align}

\begin{proposition}
Assume that ${\sf r}(\lambda)$ satisfies
properties \eqref{r_property_1}--\eqref{r_property_6} and the solution of \eqref{def:W} exists. Then the solution of RHP \ref{rhp:generic1} is described by

\begin{align}\label{def:N}
   \Phi(\lambda,x) = \mathbb{1} + \frac{1}{2\pi \ii } \int_{\Gamma_1\cup\Gamma_2} \chi(\mu,x) \left( G(\mu,x)- \mathbb{1}\right) C(\mu, \lambda) d\mu.
\end{align}
\end{proposition}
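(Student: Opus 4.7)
The plan is to verify, one at a time, the three defining conditions of RHP~\ref{rhp:generic1} for the function $\Phi(\lambda,x)$ given by \eqref{def:N}, exploiting the structure of the Gusman-Rodin kernel and the hypothesis that \eqref{def:W} admits a solution $\chi$.

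First I would verify the normalization $\Phi(\ii K', x) = \mathbb{1}$. This is a direct consequence of $C(\mu, \ii K') = 0$: from \eqref{def:Cauchy}, the first and second $\zeta$-terms cancel at $\lambda = \ii K'$, and the remaining constant contribution is $\zeta(-K) + \zeta(K) = 0$ by the oddness of the Weierstrass $\zeta$-function. Thus the integral in \eqref{def:N} vanishes at $\lambda = \ii K'$ and the normalization is immediate.

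Next I would derive the jump condition via the torus analog of the Plemelj-Sokhotski formula. The local expansion $\zeta(\mu - \lambda) = (\mu - \lambda)^{-1} + O(\mu - \lambda)$ shows that near the diagonal the kernel $C(\mu, \lambda)$ is the standard Cauchy kernel plus a remainder analytic in both variables, so the classical Plemelj jump applies to \eqref{def:N} and gives $\Phi_+(\lambda,x) - \Phi_-(\lambda,x) = \chi(\lambda,x)(G(\lambda,x) - \mathbb{1})$. Comparing the $\lambda \to \lambda - \ii 0$ boundary value of \eqref{def:N} with the defining equation \eqref{def:W} identifies $\Phi_-(\lambda,x) = \chi(\lambda,x)$, and substituting this into the jump relation yields $\Phi_+ = \Phi_- G$, as required.

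Finally I would verify piecewise analyticity of $\Phi$ on $\mathbb{T}^2 \setminus (\Gamma_1 \cup \Gamma_2)$. For $\lambda$ off the contour and away from the exceptional point $\lambda = K + \ii K'$, the integrand in \eqref{def:N} is analytic in $\lambda$ and smooth in $\mu \in \Gamma_1 \cup \Gamma_2$; the latter uses the smoothness of $\sfr$ from \eqref{r_property_1} together with its rapid vanishing at $0, 2K, 2\ii K', 2K + 2\ii K'$ from \eqref{r_property_6}, which controls the $\ee^{\pm 2\ii x w_3(\lambda)}$ factors against the poles of $w_3$. Analyticity of $\Phi$ is then inherited from the kernel by differentiation under the integral.

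\textbf{Main obstacle.} The genuinely delicate point is the intrinsic pole of the Gusman-Rodin kernel at $\lambda = K + \ii K'$, which threatens to give $\Phi$ a pole there and spoil the required boundedness. The expected resolution is that the associated residue $\int_{\Gamma_1 \cup \Gamma_2} \chi(\mu,x)(G(\mu,x) - \mathbb{1})\, d\mu$ either vanishes for structural reasons or produces only a removable singularity; tracking this carefully is the real price of working with a scalar Cauchy kernel on the torus, while every other step reduces to a standard Plemelj-Sokhotski computation once the algebraic structure of $C(\mu,\lambda)$ is in hand.
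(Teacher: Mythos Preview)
Your verification of the normalization at $\lambda=\ii K'$ and of the jump via the local Plemelj--Sokhotski formula is correct and is implicitly what the paper relies on as well. You are also right that the only real issue is the extraneous simple pole of the kernel at $\lambda=K+\ii K'$.

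The gap is in how you propose to deal with that pole. You conjecture that the residue
\[
\frac{1}{2\pi\ii}\int_{\Gamma_1\cup\Gamma_2}\chi(\mu,x)\bigl(G(\mu,x)-\mathbb{1}\bigr)\,d\mu
\]
vanishes ``for structural reasons'' or is somehow removable, but you give no mechanism for this, and in fact there is no direct symmetry or moment condition available that forces it to vanish. The paper's argument is quite different and does not attempt to compute or bound this residue at all. Instead it exploits a piece of information you have not used: by Rodin's solvability result, the RHP \ref{rhp:generic1} already has a bona fide bounded solution $\Phi$. Writing the right-hand side of \eqref{def:N} as $\widetilde{\Phi}$, the ratio $\widehat{\Phi}=\widetilde{\Phi}\,\Phi^{-1}$ has no jump on $\Gamma_1\cup\Gamma_2$ and is therefore an elliptic matrix function on $\mathbb{T}^2$ with at most a single simple pole (at $K+\ii K'$). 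But an elliptic function cannot have exactly one simple pole, so $\widehat{\Phi}$ is in fact holomorphic, and consequently $\widetilde{\Phi}=\widehat{\Phi}\,\Phi$ is holomorphic at $K+\ii K'$ as well. This Liouville-type ratio trick, leaning on the externally supplied existence of $\Phi$, is the missing idea in your outline.
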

\begin{proof}
From the expression for $\Phi(\lambda,x)$ it seems that it might have pole at $\lambda=K+\ii K'$. Let us show that in fact it is not true. As it has already been mentioned the RHP \ref{rhp:generic1} has solution $\Phi(\lambda,x)$ under the soliton - free condition $\sfa(\lambda)\neq 0$. Denote the right hand side of \eqref{def:N} as $\widetilde{\Phi}(\lambda,x)$. Consider $\widehat{\Phi}(\lambda,x)=\widetilde{\Phi}(\lambda,x)\left(\Phi(\lambda,x)\right)^{-1}$.{ Functions $\widetilde{\Phi}(\lambda,x)$ and $\Phi(\lambda,x)$ have the same jump on the contour $\Gamma_1\cup\Gamma_2$. }Therefore it is an elliptic function with a possible simple pole at $\lambda=K+\ii K'$. Since there is no elliptic function with a single simple pole, $\widehat{\Phi}(\lambda,x)$ has no singularity at $\lambda=K+\ii K'$, {\it i.e} the residue at the afore-stated point is zero. Therefore, the same holds for $\widetilde{\Phi}(\lambda,x)=\widehat{\Phi}(\lambda,x)\Phi(\lambda,x)$. {Hence the right hand side of \eqref{def:N} has no pole at $\lambda=K+\ii K'$ and} we can identify it with the solution of the RHP \ref{rhp:generic1} since it satisfies the same jump condition.
\end{proof}

\noindent We can use symmetrization to construct solution of the RHP \ref{rhp1}.
\begin{proposition}
The solution $Y(\lambda,x)$, can be recovered through the following symmetrization procedure of the solution of the above RHP \ref{rhp:generic1}
\begin{align}
Y(\lambda)=\frac{1}{\sqrt{c}}\left(\Phi(\lambda)+ \sigma_{3} \Phi(\lambda + 2K) \sigma_{3}+ \sigma_{1}\Phi(\lambda + 2\ii K') \sigma_{1}+ \sigma_{2} \Phi(\lambda + 2K+2\ii K') \sigma_{2}\right),\\ \label{eq:N-Z}\\
c={\det\left(\Phi(\lambda)+ \sigma_{3} \Phi(\lambda + 2K) \sigma_{3}+ \sigma_{1}\Phi(\lambda + 2\ii K') \sigma_{1}+ \sigma_{2} \Phi(\lambda + 2K+2\ii K') \sigma_{2}\right)}.
\end{align}
\end{proposition}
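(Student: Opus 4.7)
The plan is to show that the unnormalized combination
$$
Z(\lambda) := \Phi(\lambda) + \sigma_3 \Phi(\lambda+2K)\sigma_3 + \sigma_1 \Phi(\lambda+2\ii K')\sigma_1 + \sigma_2 \Phi(\lambda+2K+2\ii K')\sigma_2
$$
inherits from $\Phi$ the jump on $\Gamma_1 \cup \Gamma_2$, acquires the two symmetries \eqref{symm_Y}, and has a nonzero constant determinant $c$, so that $Y := Z/\sqrt{c}$ supplies the (by Proposition \ref{prop:uniq_init} unique-up-to-sign) solution of RHP \ref{rhp1}. I will carry this out in four steps: symmetry, jump, constancy of $\det Z$, and identification.

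First I would verify the symmetries. Using the Pauli identities (in particular $\sigma_3\sigma_1 A \sigma_1\sigma_3 = \sigma_2 A \sigma_2$ and $\sigma_3\sigma_2 A\sigma_2\sigma_3 = \sigma_1 A \sigma_1$ for any $2\times 2$ matrix $A$), together with the torus-periodicity $\Phi(\lambda+4K) = \Phi(\lambda+4\ii K') = \Phi(\lambda)$ implicit in RHP \ref{rhp:generic1}, one sees that $A \mapsto \sigma_3 A(\cdot+2K)\sigma_3$ merely permutes the four summands of $Z$, giving $\sigma_3 Z(\lambda+2K)\sigma_3 = Z(\lambda)$; the analogous permutation argument handles the $2\ii K'$-shift conjugated by $\sigma_1$. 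Next I would verify the jump. The symmetries \eqref{r_property_3}--\eqref{r_property_4} for $\sfr$ and the Jacobi-function identities $w_3(\lambda+2K) = w_3(\lambda)$, $w_3(\lambda+2\ii K') = -w_3(\lambda)$ yield the jump-matrix relations
$$
\sigma_3 G(\lambda+2K, x)\sigma_3 = G(\lambda, x), \qquad \sigma_1 G(\lambda+2\ii K', x)\sigma_1 = G(\lambda, x)^{-1},
$$
with the analogous $\sigma_2 G(\lambda+2K+2\ii K', x)\sigma_2 = G(\lambda, x)^{-1}$. The appearance of $G^{-1}$ for the $(2\ii K')$- and $(2K + 2\ii K')$-shifted summands is exactly compensated by the fact that $\Gamma_2$, viewed via the torus identification as a boundary of $\Omega_+$, is traversed with the reverse orientation of $\Gamma_1$; hence each of the four summands contributes the same jump $G(\lambda, x)$, and summing gives $Z_+ = Z_- G(\lambda, x)$ throughout $\Gamma_1\cup\Gamma_2$.

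To show that $c = \det Z$ is a nonzero constant, observe that $\det G \equiv 1$ makes $\det Z$ continuous across $\Gamma_1 \cup \Gamma_2$ and therefore analytic on the whole torus, while the symmetries from Step 1 together with $(\det \sigma_j)^2 = 1$ force $\det Z(\lambda+2K) = \det Z(\lambda+2\ii K') = \det Z(\lambda)$, so $\det Z$ descends to an analytic elliptic function on $\mathbb{C}/(2K\mathbb{Z}+2\ii K'\mathbb{Z})$. Boundedness of $\Phi$ then makes $\det Z$ bounded, and Liouville's theorem forces $\det Z \equiv c$. To rule out $c = 0$, I use existence (by \cite{rodin1989}) of a solution $Y$ of RHP \ref{rhp1}: since $Z$ and $Y$ share both jump and symmetries, $ZY^{-1}$ is analytic on the torus and the symmetries force it to be a scalar matrix, so $Z = aY$ for a scalar $a$ and $c = a^2$; a direct evaluation of $Z(\ii K')$ (where $\Phi(\ii K') = \mathbb{1}$) shows $Z \not\equiv 0$, hence $a \neq 0$.

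With $c$ a nonzero constant, $Y := Z/\sqrt{c}$ inherits the jump, symmetries, boundedness and piecewise analyticity of $Z$ and by construction satisfies $\det Y = 1$; so $Y$ solves RHP \ref{rhp1}, and Proposition \ref{prop:uniq_init} identifies it (up to sign) with the solution. The main obstacle is the careful bookkeeping in Step 2: the naive identities produce $G^{-1}$ on the $(2\ii K')$- and $(2K+2\ii K')$-shifted summands, and only by tracking the opposite orientation of $\Gamma_2$ versus $\Gamma_1$---equivalently, that the $\sigma_1$-shift exchanges $\Omega_+$ and $\Omega_-$ as recorded in \eqref{symmetry}---does the symmetrized sum coherently satisfy the single jump relation $G(\lambda,x)$ required of $Y$.
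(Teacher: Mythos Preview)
Your approach is essentially the paper's: verify that the symmetrized combination satisfies the same jump as $\Phi$, observe that its determinant is therefore a $\lambda$-independent constant, and normalize. Your treatment of the jump is in fact more careful than the paper's terse ``They imply that the numerator has the same jump as $\Phi$''; your explanation of how the swap $\Omega_+\leftrightarrow\Omega_-$ under the $2\ii K'$-shift converts the naive $G^{-1}$ back into $G$ is the right mechanism.

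There is, however, a genuine gap in your argument that $c\neq 0$. Knowing $\Phi(\ii K')=\mathbb{1}$ does \emph{not} let you conclude $Z(\ii K')\neq 0$: the value
\[
Z(\ii K')=\mathbb{1}+\sigma_3\Phi(2K+\ii K')\sigma_3+\sigma_1\Phi(3\ii K')\sigma_1+\sigma_2\Phi(2K+3\ii K')\sigma_2
\]
involves three unknown matrices that could cancel the identity. In fact, your own relation $Z=aY$ can be made explicit: writing $\Phi=CY$ with $C=Y(\ii K')^{-1}$ and using the Pauli identity $C+\sigma_3 C\sigma_3+\sigma_1 C\sigma_1+\sigma_2 C\sigma_2=2\operatorname{tr}(C)\,\mathbb{1}$, one gets $a=2\operatorname{tr}\,Y(\ii K')^{-1}$, and a unit-determinant matrix can certainly have zero trace. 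So the ``direct evaluation'' does not rule out $a=0$ at a given $x$. The paper handles this point differently: it defers to the large-$x$ analysis of the following subsection, where $\Phi(\lambda,x)\to\mathbb{1}$ and hence $Z\to 4\mathbb{1}$, $c\to 16$, establishing that $c$ is not identically zero. You should replace your evaluation-at-$\ii K'$ step with an appeal to that asymptotic (or an equivalent argument).
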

\begin{proof}
    Using symmetries \eqref{eq:a_symm}, \eqref{eq:b_symm} we can get symmetries for the jump $G(\lambda,x,t)$:
    \begin{align}
G(\lambda,x,t)=\sigma_3G(\lambda+2K,x,t)\sigma_3,&&G(\lambda,x,t)=\sigma_1G^{-1}(\lambda+2\ii K',x,t)\sigma_1,
\end{align}
\begin{align}
G(\lambda,x,t)=\sigma_2G^{-1}(\lambda+2K+2\ii K',x,t)\sigma_2.
    \end{align}
    They imply that the numerator of \eqref{eq:N-Z} has the same jump as $\Phi(\lambda,x,t)$. In particular, it implies that its determinant does not depend on $\lambda$ and dividing by the determinant does not violate jump and analyticity condition. As a result, we get the unimodular function $Y(\lambda,x,t)$. The points where the determinant $c$ is zero correspond to the singularities of the solution $Y(\lambda,x,t)$. Based on the large $x$ asymptotic of $\Phi(\lambda,x)$ obtained in the next section \ref{sec:initial_condition_asymptotic} we can tell that the determinant $c$ is not identically zero.
\end{proof}
\subsection{Reconstruction of the linear equation \eqref{def:LaxU} from $Y(\lambda, x)$} \label{sec:initial_condition_asymptotic}   
The solution of the RHP \ref{rhp1} solves the differential equation \eqref{def:LaxU}. We will omit the proof of this statement and instead refer the reader to Proposition \ref{prop:lax_pair_proof} where we prove an analogous statement for the time depended  RHP ,
(\ref{rhp2}). Considering $t=0$ case of it we can see that solution of  the RHP \ref{rhp1} denoted by $Y(\lambda,x,0)$ solves equation \eqref{eq:Ydeq}.

\begin{proposition}\label{prop:init_solution_formula}
Consider solution of  the RHP \ref{rhp1} denoted by $Y(\lambda,x)$. It solves equation \eqref{eq:Ydeq} with $L(x)$ appearing in the coefficients of the differential equation. It can be recovered from $Y(\lambda,x)$ using the relation
\begin{align}\label{sol_LL_RHP_init}
   Y(0,x) \sigma_3 \left(Y(0,x)\right)^{-1} = \sum_{j=1}^3 L_j(x) \sigma_j .
\end{align}
\end{proposition}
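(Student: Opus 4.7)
The plan is to follow the standard dressing-type argument: form the $x$-logarithmic derivative of $Y$ corrected by the expected $w_3 \sigma_3$ term, and show that the resulting object is elliptic with exactly the singularity structure of $U(\lambda,x)$. Concretely, I would introduce
\begin{equation}
A(\lambda, x) := \bigl(\partial_x Y(\lambda, x)\bigr) Y(\lambda, x)^{-1} - \ii\, w_3(\lambda)\, Y(\lambda, x)\, \sigma_3\, Y(\lambda, x)^{-1},
\end{equation}
and aim to prove $A(\lambda, x) = U(\lambda, x)$, with $L_j(x)$ defined by \eqref{sol_LL_RHP_init}. Note this definition is consistent because $Y(0,x)\sigma_3 Y(0,x)^{-1}$ is traceless and the conjugation symmetry inherited from \eqref{eq:conjugation-symmetry} makes it Hermitian, so its expansion in Pauli matrices has real coefficients.

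First, I would verify that $A$ has no jump across $\Gamma_1 \cup \Gamma_2$. A direct differentiation of the explicit jump matrix in \eqref{rhp1:jump} gives the key identity $\partial_x G(\lambda, x) = -\ii\, w_3(\lambda)\bigl[\sigma_3, G(\lambda, x)\bigr]$, and combining this with $Y_+ = Y_- G$ shows, by a standard computation, that the expressions for $A$ formed from $Y_+$ and from $Y_-$ coincide on the contour. Hence $A$ is a single-valued meromorphic matrix function on $\mathbb{T}^2$.

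Next, I would exploit the symmetries \eqref{symm_Y} of $Y$ together with the transformation rules of $w_3$ under $\lambda \mapsto \lambda + 2K$ and $\lambda \mapsto \lambda + 2\ii K'$ to establish $\sigma_3 A(\lambda + 2K, x)\sigma_3 = A(\lambda, x)$ and $\sigma_1 A(\lambda + 2\ii K', x)\sigma_1 = A(\lambda, x)$, which are exactly the symmetries \eqref{symm:U} of $U$. For the singularity analysis, $Y$ and $Y^{-1} = (\det Y)^{-1}\,\mathrm{adj}(Y)$ are bounded on $\mathbb{T}^2$, so poles of $A$ can only come from the factor $w_3$, i.e.\ at $\lambda = 0, 2K, 2\ii K', 2K+2\ii K'$. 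Near $\lambda = 0$ all three weights expand as $w_j(\lambda) = \rho/\lambda + O(\lambda)$, so
\begin{equation}
\mathrm{Res}_{\lambda = 0} A(\lambda, x) = -\ii \rho\, Y(0, x)\sigma_3 Y(0,x)^{-1}, \qquad \mathrm{Res}_{\lambda = 0} U(\lambda, x) = -\ii \rho \sum_{j=1}^{3} L_j(x)\sigma_j .
\end{equation}
Defining $L_j(x)$ through \eqref{sol_LL_RHP_init} matches the residues at $\lambda = 0$, and the symmetries already established then propagate the matching to the three other singular points. Consequently $A - U$ is a holomorphic elliptic function obeying both $\sigma_3$- and $\sigma_1$-conjugation symmetries, hence a constant commuting with $\sigma_1$ and $\sigma_3$, i.e.\ a scalar; but since it is also traceless (both $A$ and $U$ are), this scalar must vanish. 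Thus $A = U$, which is precisely equation \eqref{eq:Ydeq} for $Y$.

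The main technical obstacle, in my opinion, is making sense of the expansion of $A$ at the four points $\lambda = 0, 2K, 2\ii K', 2K + 2\ii K'$, because the jump matrix $G$ has essential singularities there through the exponentials $e^{\pm 2\ii x w_3(\lambda)}$. This is where one must invoke property \eqref{r_property_6}: the reflection coefficient $\mathsf{r}$ vanishes together with all derivatives at these points, so $G(\lambda,x) - \mathbb{1}$ decays to the identity faster than any polynomial and $Y$ admits a $C^\infty$ (smooth, though not analytic) extension through the puncture points. With this regularity, the Laurent expansion of $A$ at each singular point is justified and the residue computation is legitimate, completing the identification $A = U$ and the reconstruction formula.
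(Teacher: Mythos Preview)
Your proposal is correct and follows essentially the same strategy as the paper: the paper defers the derivation of \eqref{eq:Ydeq} to Proposition~\ref{prop:lax_pair_proof}, where it forms $\partial_x\Psi\,\Psi^{-1}$ with $\Psi = Y\ee^{-\ii x w_3\sigma_3}$ (which is exactly your $A$), observes it has no jump and hence is elliptic, reads off the simple-pole residue at $\lambda=0$, propagates by the $\sigma_3/\sigma_1$ symmetries, and kills the remaining constant by the trace and symmetry argument. Your presentation is in fact somewhat more explicit than the paper's---you write out the identity $\partial_x G=-\ii w_3[\sigma_3,G]$ verifying the no-jump claim, and you flag the regularity issue at $\lambda=0,2K,2\ii K',2K+2\ii K'$ coming from the essential singularities of $G$, invoking property~\eqref{r_property_6}, whereas the paper leaves both points implicit.
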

\begin{proof}
The above expression is obtained by considering asymptotic behaviour $\lambda\to 0$ of \eqref{eq:Ydeq}. Precisely, according to Figure \ref{fig:foobar} near $\lambda=0$, the elliptic functions $w_{\alpha}$ behave as $1/\lambda$ for all $\alpha=1,2,3$. Therefore, comparing the coefficients of $1/\lambda$ in \eqref{eq:Ydeq}, we obtain \eqref{sol_LL_RHP_init}.
\end{proof}
Now, we are coming to the main part of this section.

\begin{proof}[Proof of Theorem \ref{thm:initial_condition_asymptotic}]
To apply standard asymptotic analysis of Riemann-Hilbert problems to our case we need to construct the analytic continuation of reflection coefficient $\sfr(\lambda)$. We follow closely \cite{deift_zhou_annals,deift1994long}.

Observing that $w_3(\lambda)$ is monotone function on the interval $[0,2K]$, we can introduce a new variable $\Theta=w_3(\lambda)$ such that $-\infty<\Theta<\infty$.  Denoting $\Rho(\lambda)= \frac{{\sf r}(\lambda) }{ 1 + \vert {\sf r}(\lambda) \vert^2}$, let us now split $\Rho(\lambda)$ into analytic and decaying parts respectively \begin{align}\Rho(\lambda(\Theta))\ee^{2\ii x \Theta}=\Rho_{dec}(\lambda(\Theta),x)+\Rho_{ana}(\lambda(\Theta),x)\label{eq:Rho_split_init}
\end{align}
with
        \begin{align}
\Rho_{dec}(\Theta,x)=\intop_{x}^{\infty}\widehat{{\sf \Rho}}(s) \ee^{\ii (2x-s) \Theta}ds,&& \Rho_{ana}(\Theta,x)=\intop_{-\infty}^{x}\widehat{{\sf \Rho}}(s) \ee^{\ii (2x-s) \Theta}ds,
\end{align}
\begin{align}
\widehat{\Rho}(s)=\frac{1}{2\pi}\intop_{-\infty}^{\infty}{{\sf \Rho}}(\lambda(\Theta)) \ee^{\ii s \Theta}d\Theta.\label{eq:tzerofourier}
    \end{align}
Since $\Rho(\lambda(\Theta))\in \mathcal{S}(\mathbb{R})$, we have $\widehat{\Rho}(s)\in \mathcal{S}(\mathbb{R})$. It implies that $\Rho_{dec}(\Theta,x)$ is decaying as $x\to +\infty$ faster than any power together with all its derivatives. We also observe that $\Rho_{ana}(\Theta,x)$ is analytic in $\Theta$ and decaying exponentially as $x\to \infty$ for $\Theta\in \mathbb{C^+}$. Besides properties with respect to variable $x$ we can observe that $\Rho_{dec}(\Theta,x)$ and $\Rho_{ana}(\Theta,x)$ behave as $\mathcal{O}(\frac{1}{\Theta})$ for $\Theta\to\infty$. We further note that the properties \eqref{eq:theta-lambda-map} imply that the function $w_3(\lambda)$ maps the neighborhood of $[0,2K]$ in $\Omega_\pm$ to the neighborhood of real line in $\mathbb{C}_{\mp}$.

We construct the analogs of $\Rho_{ana}(\Theta({\lambda}),x)$ and   $\Rho_{dec}(\Theta({\lambda}),x)$ for the other parts of $\Gamma_1\cup\Gamma_2$. We can now factorize the jump in \eqref{rhp1:jump} as follows
    \begin{align*}
G(\lambda,x)    &=\left( \begin{array}{cc}
      1 & 0 \\
       \Rho_{ana}(\Theta(\lambda),x)  & 1 
    \end{array} \right) \left( \begin{array}{cc}
      1 & 0 \\
       \Rho_{dec}(\Theta(\lambda),x)  & 1 
    \end{array} \right)\\  &\times \left( \begin{array}{cc}
      1 + \vert {\sf r}(\lambda) \vert^2   & 0\\
       0 & \frac{1}{ 1 + \vert {\sf r}(\lambda) \vert^2} 
    \end{array} \right) \left( \begin{array}{cc}
      1   &  \overline{\Rho_{dec}(\Theta({\lambda}),x)}\\
       0  & 1 
    \end{array} \right)\left( \begin{array}{cc}
      1   &  \overline{\Rho_{ana}(\Theta(\bar{\lambda}),x)}\\
       0  & 1 
    \end{array} \right).
    \end{align*}
     To get rid of the diagonal jump, we introduce the function
    \begin{align}
        \label{eq:delta}\delta(\lambda) = \exp\left[\frac{1}{2\pi \ii} \int_{-2K}^{0} \log\left(1+ |r(\eta)|^2 \right) \frac{w_3(\eta-\lambda)}{\rho} d\eta \right].
    \end{align}
It satisfies the jump condition 
\begin{align}
\delta_+(\lambda)=\delta_-(\lambda)(1+|\sfr(\lambda)|^2),\quad \lambda\in\Gamma_1\cup\Gamma_2.
\end{align}
In particular, using the identity $1+|\sfr(\lambda)|^2=\frac{1}{|\sfa(\lambda)|^2}$, we can notice that 
\begin{align*}
    \delta(\lambda)=\begin{cases}
        \frac{1}{\sfa(\lambda)},\quad \lambda\in\Omega_+,
        \\ \\
{\overline{\sfa(\bar{\lambda})},\quad \lambda\in\Omega_-}.
    \end{cases}
\end{align*}

which confirms relation \eqref{ar}. We can then define
\begin{align}
    T^{(0)}(\lambda,x):=\begin{cases}
Y(\lambda,x)\left( \begin{array}{cc}
      1 & 0 \\
       {\Rho_{ana}(\Theta(\lambda),x) } & 1 
    \end{array} \right)\delta(\lambda)^{-\sigma_3},\\\hspace{4cm}\mbox{for} \begin{array}{c}-\varepsilon<\Im(\lambda)<0,\quad\mbox{and}\\ 
    -2K'<\Im(\lambda)<-2K'+\varepsilon,\quad \lambda \in \Omega_-\end{array}\\ 
    Y(\lambda,x)\left( \begin{array}{cc}
      1   & - \overline{\Rho_{ana}(\Theta(\bar{\lambda}),x)}\\
       0  & 1 
    \end{array} \right)\delta(\lambda)^{-\sigma_3},
    \\\hspace{4cm}\mbox{for}\begin{array}{c}0<\Im(\lambda)<\varepsilon,\quad\mbox{and}\\
    2K'-\varepsilon<\Im(\lambda)<2K',\quad \lambda \in \Omega_+\end{array}\\
    Y(\lambda,x)\delta(\lambda)^{-\sigma_3},\\\hspace{4cm}\mbox{otherwise.} 
\end{cases}\label{eq:T_0_def}
\end{align}
The resulting jump for $T^{(0)}(\lambda,x)$ is given by $G_{T^{(0)}}(\lambda,x)$:
\begin{align}
G_{T^{(0)}}(\lambda,x):=\begin{cases}\left( \begin{array}{cc}
      1 & 0 \\
(\delta(\lambda))^{-2}\Rho_{ana}(\Theta(\lambda),x) & 1 
    \end{array} \right),\quad \begin{array}{c}\Im(\lambda)=-\varepsilon,\quad\mbox{and}\\
\Im(\lambda)=-2K'+\varepsilon,\quad \lambda \in \Omega_-\end{array}\\ \\
    \left( \begin{array}{cc}
      1   &  (\delta(\lambda))^2\overline{\Rho_{ana}(\Theta(\bar{\lambda}),x)}\\
       0  & 1 
    \end{array} \right),\quad \begin{array}{c}\Im(\lambda)=\varepsilon,\quad\mbox{and}\\
    \Im(\lambda)=2K'-\varepsilon,\quad \lambda \in \Omega_+\end{array}\\ \\
    \left( \begin{array}{cc}
      1 & 0 \\
      (\delta_+(\lambda)\delta_-(\lambda))^{-1} \Rho_{dec}(\Theta(\lambda),x)  & 1 
    \end{array} \right)\times\\
    \left( \begin{array}{cc}
      1   & \delta_+(\lambda)\delta_-(\lambda) \overline{\Rho}_{dec}(\Theta({\lambda}),x)\\
       0  & 1 
    \end{array} \right),\quad \lambda\in\Gamma_1\cup\Gamma_2
    \end{cases}
\end{align}
Note that $G_{T^{(0)}}(\lambda,x)-\mathbb{1}$ is small for positive $x$ and large $\Theta$, that is $|G_{T^{(0)}}-\mathbb{1}|=\mathcal{O}(\frac{x^{-k}}{1+|\Theta(\lambda)|}), \ x \to \infty$ for any $k\in\mathbb{
N}$. 
 We now use the singular integral equation and the symmetrization procedure to derive the asymptotic of $L(x)$ following section \ref{sec:singular_integral_equation}. To begin with, consider singular integral equation
 \begin{align}\label{def:chi-_zero}
    \chi^{(0)}(\lambda,x) = \mathbb{1} + \frac{1}{2\pi \ii } \int_{\Sigma^{(0)}} \chi^{(0)}(\mu,x) \left( G_{{T}^{(0)}}(\mu,x)- \mathbb{1}\right) C(\mu, \lambda-\ii 0) d\mu,
\end{align}
and
\begin{align}
    \Phi^{(0)}(\lambda,x) = \mathbb{1} + \frac{1}{2\pi \ii } \int_{\Sigma^{(0)}} \chi^{(0)}(\mu,x) \left( G_{{T}^{(0)}}(\mu,x)- \mathbb{1}\right) C(\mu, \lambda) d\mu.
\end{align}
where 
 \begin{align*}
\Sigma^{(0)}=\Gamma_1\cup\Gamma_2 \cup\{\Im(\lambda)=\varepsilon,\lambda\in \Omega_+\}\cup\{\Im(\lambda)=2K'-\varepsilon,\lambda\in \Omega_+\}\\
 \hspace*{2cm}\cup\{\Im(\lambda)=-\varepsilon,\lambda\in \Omega_-\}\cup\{\Im(\lambda)=-2K'+\varepsilon,\lambda\in \Omega_-\}.
 \end{align*}
According to \cite{rodin1989} normalized version of RHP \ref{rhp1} is solvable. Therefore the solution of normalized version of Riemann-Hilbert problem for $T^{(0)}(\lambda,x)$ is also solvable and solution is given by $ \Phi^{(0)}(\lambda,x)$. We proceed with standard small norm theorem computation. The estimate for $G_{T^{(0)}}$ and the boundedness of singular integral operator with Cauchy kernel in $L_2$ provides us with estimate $\|\chi^{(0)}-\mathbb{1}\|_{L_2(\Sigma^{(0)})}=\mathcal{O}(x^{-k})$ for any $k\in\mathbb{N}$. We proceed to estimate $\Phi^{(0)}(\lambda,x)$
\begin{align}
   |\Phi^{(0)}(\lambda,x)-\mathbb{1}|\leq \frac{1}{2\pi }\left| \int_{\Sigma^{(0)}}  \left( G_{{T}^{(0)}}(\mu,x)- \mathbb{1}\right) C(\mu, \lambda) d\mu\right|\\+\frac{1}{2\pi  } \left|\int_{\Sigma^{(0)}} (\chi^{(0)}(\mu,x)-\mathbb{1}) \left( G_{{T}^{(0)}}(\mu,x)- \mathbb{1}\right) C(\mu, \lambda) d\mu\right|
\end{align}
Using the estimate for $G_{T^{(0)}}$ we deduce that
\begin{equation}
|\Phi^{(0)}(\lambda,x)-\mathbb{1}|=\mathcal{O}(x^{-k}), \ x \to \infty \mbox{ for any } k\in\mathbb{
N} \mbox{ and for } \mathrm{dist}(\lambda,\Sigma^{(0)})\geq \frac{c}{1+|w_3(\lambda)|}.
\end{equation}
We can notice that function 
$T^{(0)}(\lambda,x) $ can be obtained from  $\Phi^{(0)}(\lambda,x) $ using symmetrization procedure
    \begin{align}
T^{(0)}(\lambda)=\frac{1}{\sqrt{c}}\left(\Phi^{(0)}(\lambda)+ \sigma_{3} \Phi^{(0)}(\lambda+2K) \sigma_{3}+ \sigma_{1}\Phi^{(0)}(\lambda+2 \ii K') \sigma_{1}\right.\\\left.+\sigma_{2} \Phi^{(0)}(\lambda+2K+2\ii K')\sigma_{2}\right),\\c=\det\left(\Phi^{(0)}(\lambda)+ \sigma_{3} \Phi^{(0)}(\lambda+2K) \sigma_{3}+ \sigma_{1}\Phi^{(0)}(\lambda+2 \ii K') \sigma_{1}\right.\\\left.+\sigma_{2} \Phi^{(0)}(\lambda+2K+2\ii K')\sigma_{2}\right).
\end{align}
and $|T^{(0)}(\lambda,x)-\mathbb{1}|=\mathcal{O}(x^{-k})$, for any $k\in\mathbb{
N}$ and for $\mathrm{dist}(\lambda,\Sigma^{(0)})\geq \frac{c}{1+|w_3(\lambda)|}$. To obtain scattering data corresponding to solution $L(x)$ we look at $x\to+\infty$ asymptotics of $Y(\lambda,x)$ which we just obtained and relate them to Jost solutions $F_+(\lambda,x)$ corresponding to $L(x)$. More precisely,
\begin{align*}
Y_+(\lambda,x)=T^{(0)}(\lambda,x)\delta(\lambda)^{\sigma_3}\begin{pmatrix}
    1& \Rho_{ana}(\Theta(\lambda),x)\\
   0&1
\end{pmatrix}\\
Y_-(\lambda,x)=T^{(0)}(\lambda,x)\delta(\lambda)^{\sigma_3}\begin{pmatrix}
    1&0\\
    -\Rho_{ana}(\Theta(\lambda),x)&1
\end{pmatrix}.
\end{align*}
Here $|T^{(0)}(\lambda,x)-\mathbb{1}|=\mathcal{O}(x^{-k}),\quad x\to\infty$ for any $k\in\mathbb{
N}$ and for $\mathrm{dist}(\lambda,\Sigma^{(0)})\geq \frac{c}{1+|w_3(\lambda)|}$.
Denote
\begin{align}
    Y_\pm(\lambda,x) = \left(y_{\pm}^{(1)}(\lambda,x), y_{\pm}^{(2)}(\lambda,x) \right).
\end{align}
Then from the computation above we can observe that 
\begin{align*}
        F_{+}(\lambda,x)=(y_+^{(1)}(\lambda,x)(\delta_+(\lambda))^{-1},y_-^{(2)}(\lambda,x) \delta_-(\lambda))\ee^{- \ii x w_3(\lambda)\sigma_3}. 
\end{align*}
As a result, we also can get decay properties of $L(x)$ for $x\to\infty$. Actually, we see that
\begin{align*}
Y(0,x)=T^{(0)}(0,x),
\end{align*}
which follows from the relations $\Rho_{ana}(\Theta(0),x)=0,\quad \sfa(0)=1$. The last relation follows from the identity
\begin{align*}
    \sfa(0)=\exp\left[\frac{1}{2\pi \ii} \int_{-2K}^{0} \log\left(1+ |\sfr(\eta)|^2 \right) \frac{w_3(\eta)}{\rho} d\eta \right]\\=\exp\left[\frac{1}{2\pi \ii} \int_{0}^{2K} \log\left(1+ |\sfr(\eta)|^2 \right) \frac{w_3(\eta)}{\rho} d\eta \right].
\end{align*}
Here we used the symmetries $\lambda\to-\lambda$ and $\lambda\to\lambda+2K$.

Similarly, for $x\to-\infty$ we can use alternative factorization
 \begin{align*}
G(\lambda,x)    =\left( \begin{array}{cc}
      1 &  \overline{{\sf r}(\lambda)}\ee^{-2 \ii x w_3(\lambda)}  \\
      0 & 1 
    \end{array} \right) \left( \begin{array}{cc}
      1 & 0\\
      {{\sf r}(\lambda)}\ee^{2 \ii x w_3(\lambda)}  & 1 
    \end{array} \right) .
    \end{align*}
    Now, instead of splitting $\Rho(\lambda)\ee^{2\ii x w_3(\lambda)}$ we split ${\sf r}(\lambda)\ee^{2 \ii x w_3(\lambda)}$ in decaying and analytic terms. Repeating similar computations as above, we obtain the decay properties of $L(x)$ for $x\to -\infty$. Moreover, we can write the second Jost solution in terms of $Y(\lambda,x)$.
    \begin{align*}
    F_{-}(\lambda,x)=(y_-^{(1)}(\lambda,x),y_+^{(2)}(\lambda,x))\ee^{- \ii x w_3(\lambda)\sigma_3} 
\end{align*}

Using the jump condition \eqref{rhp1:jump} we can check that Jost solution satisfy \eqref{jump_jost}, with $\sfa(\lambda)=\frac{1}{\delta(\lambda)}$ and $\sfb(\lambda)=\sfr(\lambda)\sfa(\lambda)$, as expected.
\end{proof}
\section{Turning on time }\label{sec:time}

We shall start with the fundamental (though conditional) fact established in \cite{Sklyanin:121210} that,  assuming that the LL equation has solution in the
Schwartz class, the corresponding scattering data evolve in a very simple way.
\begin{proposition}(\cite{Sklyanin:121210})\label{prop:t_dependence} Assume $t>0$ is fixed and $$L_1(x,t), L_2(x,t), L_3(x,t)-1 \in \mathcal{S}(\mathbb{R}).$$ Denote by $F_\pm(\lambda,x,t)$ the Jost solutions corresponding to $L(x,t)$ with $t>0$.
    Then the matrix-valued functions $$J_\pm(\lambda,x,t)=F_\pm(\lambda,x,t)\ee^{2\ii t w_1(\lambda) w_2(\lambda)\sigma_3}$$ solves equations \eqref{def:LaxU}, \eqref{def:LaxV}. Moreover the scattering data depends on time as \begin{align}\label{eq:scat_evolve}
{\sf a}(\lambda,t)={\sf a}(\lambda), \quad {\sf b}(\lambda,t)={\sf b}(\lambda)\ee^{-4\ii tw_1(\lambda)w_2(\lambda)}.
\end{align}
\end{proposition}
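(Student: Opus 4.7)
The plan is to verify the two Lax equations for $J_\pm$ in turn and then read off the scattering data evolution from \eqref{jump_jost}. The $x$-equation \eqref{def:LaxU} for $J_\pm$ is immediate: since the conjugation factor $\ee^{2\ii t w_1(\lambda) w_2(\lambda) \sigma_3}$ is independent of $x$, one has $\partial_x J_\pm = (\partial_x F_\pm)\,\ee^{2\ii t w_1 w_2 \sigma_3} = U F_\pm\,\ee^{2\ii t w_1 w_2 \sigma_3} = U J_\pm$. Note also that $\det J_\pm = \det F_\pm = 1$ because $\tr U = 0$, so $J_\pm$ is invertible throughout the argument.

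For the $t$-equation I would introduce $\widetilde{V} := (\partial_t J_\pm)\,J_\pm^{-1}$ and use the cross-derivative trick. Differentiating $\partial_x J_\pm = U J_\pm$ in $t$ and comparing with $\partial_t J_\pm = \widetilde{V} J_\pm$ gives $\partial_x \widetilde{V} = \partial_t U + [U, \widetilde{V}]$. The LL equation is, by \eqref{eq:comp-cond}, equivalent to $\partial_x V = \partial_t U + [U, V]$, so $\partial_x (V - \widetilde{V}) = [U, V - \widetilde{V}]$, which says precisely that $J_\pm^{-1}(V - \widetilde{V}) J_\pm$ is $x$-independent. To identify this constant with zero I would pass to $x \to \pm\infty$: under $L(x,t) \to (0,0,1)$ and $\partial_x L \to 0$, only the $j = 3$ term in the double sum of \eqref{def:UV} survives and the $P_j$-sum vanishes, yielding $V \to 2\ii w_1 w_2 \sigma_3$; on the other hand, the Jost asymptotic $F_\pm \sim \ee^{-\ii x w_3 \sigma_3}$ is $t$-independent, so $J_\pm \sim \ee^{-\ii x w_3 \sigma_3}\,\ee^{2\ii t w_1 w_2 \sigma_3}$ and therefore $\widetilde{V} \to 2\ii w_1 w_2 \sigma_3$ as well. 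Hence $V \equiv \widetilde{V}$ and \eqref{def:LaxV} holds for $J_\pm$.

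Once $J_\pm$ both solve the full Lax pair, the gauge-transition matrix $\widetilde{S}(\lambda) := J_-^{-1}(\lambda,x,t) J_+(\lambda,x,t)$ is independent of both $x$ and $t$, since $\partial_x \widetilde{S} = -J_-^{-1} U J_+ + J_-^{-1} U J_+ = 0$ and similarly for $\partial_t$. Substituting $F_\pm = J_\pm\,\ee^{-2\ii t w_1 w_2 \sigma_3}$ into the scattering relation \eqref{jump_jost} yields $\widetilde{S}(\lambda) = \ee^{-2\ii t w_1 w_2 \sigma_3} S(\lambda, t)\,\ee^{2\ii t w_1 w_2 \sigma_3}$. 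Evaluating at $t = 0$ fixes $\widetilde{S}(\lambda) = S(\lambda)$, and inverting gives $S(\lambda, t) = \ee^{2\ii t w_1 w_2 \sigma_3} S(\lambda)\,\ee^{-2\ii t w_1 w_2 \sigma_3}$. Conjugating the matrix $S(\lambda)$ in \eqref{jump_jost_scattering} by the diagonal $\ee^{2\ii t w_1 w_2 \sigma_3}$ leaves the diagonal entries $\sfa, \overline{\sfa}$ untouched and multiplies $\sfb$ by $\ee^{-4\ii t w_1 w_2}$, which is exactly \eqref{eq:scat_evolve}.

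The main obstacle is the rigorous passage to the limit $x \to \pm\infty$ in the second step: beyond the pointwise Jost asymptotic one also needs $\partial_t F_\pm \to 0$ uniformly in $t$, so that the limit of $\widetilde{V} = (\partial_t J_\pm)J_\pm^{-1}$ is controlled. I would obtain this by differentiating the Volterra integral equations \eqref{eq:int_ypm1}--\eqref{eq:int_ypm2} with respect to $t$ (which enters only through $U$), noting that $\partial_t U$ inherits Schwartz decay from the hypothesis $L(\cdot, t) - (0,0,1) \in \mathcal{S}(\mathbb{R})$, and bounding the resulting Neumann series in the same way as for the original Volterra kernel. All remaining estimates---boundedness of $J_\pm$, convergence of $V$, differentiability of scattering data in $t$---follow directly from the standing Schwartz assumption.
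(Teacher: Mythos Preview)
Your proof is correct and follows essentially the same route as the paper: the paper packages the $t$-equation step as $W_\pm := \partial_t J_\pm - V J_\pm$ satisfying \eqref{def:LaxU} (equivalent to your $\partial_x(V-\widetilde V)=[U,V-\widetilde V]$, since $W_\pm = (\widetilde V - V)J_\pm$), then kills the resulting $x$-constant by the Jost asymptotics, and for the scattering evolution it differentiates $S(\lambda,t)$ to obtain the ODE $\partial_t S = -2\ii w_1 w_2[S,\sigma_3]$, whose solution is exactly your conjugation formula. Your explicit discussion of why $\partial_t F_\pm \to 0$ at $x\to\pm\infty$ is more careful than the paper, which simply invokes \eqref{asymp_jost}.
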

\begin{proof}
As \eqref{def:LaxU} is the derivative w.r.t $x$, it is immediate that $J_\pm(\lambda,x,t)$ solves it. To show that \eqref{def:LaxV} is satisfied by $J_\pm(\lambda,x,t)$, denote $$W_\pm(\lambda,x,t)=\partial_t J_\pm(\lambda,x,t) -V(\lambda,x,t)J_\pm(\lambda,x,t).$$ Using compatibility condition \eqref{eq:comp-cond} we can see that $W_\pm(\lambda,x,t)$ satisfies \eqref{def:LaxU}, which implies that $W_\pm(\lambda,x,t)=F_\pm(\lambda,x,t) C_\pm(\lambda,t) $ for some constant in $x$ matrices $C_\pm(\lambda,t)$. Using asymptotic condition \eqref{asymp_jost} we see that $C_\pm(\lambda,t)\equiv0$. This shows that $J_\pm(\lambda,x,t)$ solves \eqref{def:LaxV}.

Finally, consider the scattering matrix $$S(\lambda,t)=\ee^{2\ii t w_1(\lambda) w_2(\lambda)\sigma_3}\left(J_-(\lambda,x,t)\right)^{-1}J_+(\lambda,x,t)\ee^{-2\ii t w_1(\lambda) w_2(\lambda)\sigma_3}.$$ Differentiating the expression above, we see that $S(\lambda,t)$ solves equation 
\begin{align}\label{eq:S_diff}
        {\partial_t S(\lambda,t)}{}=-2\ii w_1(\lambda)w_2(\lambda)[S(\lambda,t),\sigma_3],
    \end{align}
    which implies \eqref{eq:scat_evolve}.
\end{proof}

Proposition (\ref{prop:t_dependence}) shows that the Riemann-Hilbert problem associated with the LL equation \eqref{def:eqLL},
i.e. whose solution yields the solution of the Cauchy problem for the LL equation, can be formulated
in the following way.
 .

\begin{rhp}\label{rhp2}
\begin{enumerate}
    \item The function $Y(\lambda,x,t)$
    is bounded and piecewise analytic for $\lambda\in \mathbb{T}^2/(\Gamma_{1} \cup \Gamma_{2})$, with $\mathbb{T}$ denoting the fundamental domain of a torus, and the contours $\Gamma_{1}, \Gamma_{2}$ as specified in Figure \ref{Contour_1}. Note that $Y(\lambda,x,t)$ is doubly periodic in $\lambda$.
    \item For $\lambda \in \Gamma_{1}, \Gamma_{2}$, the following jump condition holds
    \begin{equation}\label{rhp2:jump}
    Y_{+} (\lambda,x,t) = 
    Y_{-}(\lambda,x,t)G(\lambda,x,t); \qquad G(\lambda,x,t) = \begin{pmatrix}
 1+|{\sf r}(\lambda,t)|^2 & \overline{{\sf r}(\lambda,t) }\ee^{-2\ii t p(\lambda,\kappa)} \\
         {\sf r} (\lambda,t)\ee^{2\ii t p(\lambda,\varkappa)} & 1 \end{pmatrix}.
\end{equation}
 where
\begin{align}\label{def:plambda}
   p(\lambda,\varkappa):= \varkappa w_3(\lambda)-2w_1(\lambda)w_2(\lambda),\quad \varkappa=\frac{x}{t}.
\end{align}
\item The function $Y(\lambda,x,t)$ satisfies the following symmetry conditions, as can be see though the identities in \eqref{eq:w-derivatives}:
  \begin{align}\label{rhp2:symmetry}
 \sigma_{3} Y(\lambda + 2K,x,t) \sigma_{3} = Y(\lambda,x,t), &&
 \sigma_{1} Y(\lambda + 2\ii K',x,t) \sigma_{1} = Y(\lambda,x,t).
\end{align}
\item Function $Y(\lambda,x,t)$ satisfies normalization condition
$\det(Y(\lambda,x,t))=1$.
\end{enumerate}
\end{rhp}

It is convenient for further computation to introduce notation
\begin{equation}\label{def:asympY}
    Y(0,x,t)=\Psi_1(x,t)
\end{equation}
\begin{proposition}
The solution $Y(\lambda,x,t)$ of the RHP \ref{rhp2} is unique up to a sign.
\end{proposition}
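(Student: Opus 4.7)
The plan is to mimic the proof of Proposition \ref{prop:uniq_init} (uniqueness for RHP \ref{rhp1}) verbatim, since the only difference between RHP \ref{rhp1} and RHP \ref{rhp2} is that the jump data and the unknown now depend on the extra parameter $t$, while the analytic and symmetry structure in the spectral variable $\lambda$ is identical.

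Concretely, I would suppose that $Y(\lambda,x,t)$ and $\widetilde{Y}(\lambda,x,t)$ are two solutions of RHP \ref{rhp2}, and study the ratio
\begin{equation}
R(\lambda,x,t) := \widetilde{Y}(\lambda,x,t)\bigl(Y(\lambda,x,t)\bigr)^{-1}.
\end{equation}
First I would check that $R$ has no jump across $\Gamma_{1}\cup\Gamma_{2}$: from \eqref{rhp2:jump} applied to both $Y$ and $\widetilde{Y}$, the jump matrix $G(\lambda,x,t)$ cancels out, so $R$ extends to a function that is analytic on the whole fundamental domain. Note that to even form $R$ one needs $\det Y \neq 0$, which is granted by the normalization $\det Y \equiv 1$ from item (4) of RHP \ref{rhp2}. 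Since $Y$ and $\widetilde{Y}$ are bounded and doubly periodic in $\lambda$ by item (1), so is $R$, so $R(\cdot,x,t)$ is a bounded entire function on the torus and must therefore be a constant matrix $C(x,t)$ in $\lambda$.

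Next I would exploit the symmetry conditions \eqref{rhp2:symmetry}. Applying them to both $Y$ and $\widetilde{Y}$ yields $\sigma_{3} C(x,t) \sigma_{3} = C(x,t)$ and $\sigma_{1} C(x,t)\sigma_{1} = C(x,t)$, i.e.\ $C(x,t)$ commutes with both $\sigma_{1}$ and $\sigma_{3}$. Since $\{\sigma_{1},\sigma_{3}\}$ generates a full matrix algebra up to scalars, the commutant is exactly the scalar matrices, so $C(x,t) = c(x,t)\,\mathbb{1}$ for some scalar function $c(x,t)$. Finally, the normalization $\det Y = \det \widetilde{Y} = 1$ forces $c(x,t)^{2} = 1$, hence $c(x,t) = \pm 1$ and $\widetilde{Y} = \pm Y$.

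I do not anticipate a real obstacle: the argument is purely algebraic once one has the structural ingredients (periodicity, shared jump, symmetry, and unit determinant), all of which are explicit in the statement of RHP \ref{rhp2}. The only minor point of care is the commutant computation in the symmetry step, but that is immediate from the anticommutation relations of the Pauli matrices.
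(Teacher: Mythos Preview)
Your argument is correct and is exactly the one the paper intends: it explicitly says the proof is similar to that of Proposition~\ref{prop:uniq_init}, which is precisely the ratio--Liouville--symmetry--determinant argument you wrote out in detail.
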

\begin{proof}
The proof is similar to the proof of Proposition \ref{prop:uniq_init} (see also \cite{rodin1984}). 
\end{proof}
\begin{proposition}
    The RHP above is solvable for any $\sfr(\lambda)$ satisfying properties \eqref{r_property_1}-\eqref{r_property_6} and the solution is smooth function
    with respect to $x$ and $t$
\end{proposition}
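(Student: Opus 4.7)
The plan is to mirror the analysis of section \ref{sec:inverse} in the time-dependent setting, leveraging the structural similarity to Riemann--Hilbert problem \ref{rhp1}. The jump matrix $G(\lambda,x,t)$ has exactly the same algebraic form as $G(\lambda,x)$ from Riemann--Hilbert problem \ref{rhp1} once one replaces $\sfr(\lambda)$ by the time-evolved coefficient $\sfr(\lambda,t)=\sfr(\lambda)\ee^{-4\ii tw_1(\lambda)w_2(\lambda)}$ from Proposition \ref{prop:t_dependence}. This modified coefficient has identical modulus $|\sfr(\lambda,t)|=|\sfr(\lambda)|$, inherits the periodicity properties \eqref{r_property_3}--\eqref{r_property_4} because $w_1w_2$ shifts consistently with $\sfb(\lambda)$ under $\lambda\mapsto \lambda+2K$ and $\lambda\mapsto\lambda+2\ii K'$, and, crucially, satisfies the rapid vanishing condition \eqref{r_property_6} at the four singular points $0,2K,2\ii K',2K+2\ii K'$ since any polynomial growth of $w_1w_2$ there is dominated by the Schwartz-type decay of $\sfr$. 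Hence Rodin's theorem from \cite{rodin1989}, already invoked for Riemann--Hilbert problem \ref{rhp1}, applies verbatim to yield solvability of Riemann--Hilbert problem \ref{rhp2} for every $(x,t)\in\mathbb{R}\times\mathbb{R}_{\geq 0}$.

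Smoothness in $(x,t)$ is then obtained by adapting the Gusman--Rodin singular integral equation of section \ref{sec:singular_integral_equation} to the time-dependent setting. One considers
\begin{equation*}
\chi(\lambda,x,t)=\mathbb{1}+\frac{1}{2\pi\ii}\int_{\Gamma_1\cup\Gamma_2}\chi(\mu,x,t)\bigl(G(\mu,x,t)-\mathbb{1}\bigr)C(\mu,\lambda-\ii 0)\,\dd\mu,
\end{equation*}
recovers a normalized solution $\Phi(\lambda,x,t)$ via the analog of \eqref{def:N}, and reconstructs $Y(\lambda,x,t)$ through the symmetrization \eqref{eq:N-Z}. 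Solvability guarantees that the associated integral operator $\mathbb{1}-\mathcal{C}_w$ is boundedly invertible on $L^2(\Gamma_1\cup\Gamma_2)$. The only $(x,t)$-dependence of $G$ enters through the smooth oscillatory factors $\ee^{\pm 2\ii tp(\lambda,\varkappa)}$; differentiating the integral equation in $x$ or $t$ produces a new equation with the same operator and an inhomogeneous right-hand side built from derivatives of $G$, which by \eqref{r_property_6} remain smooth and $L^2$-integrable on $\Gamma_1\cup\Gamma_2$. A standard bootstrap then shows that $\chi$, $\Phi$, and $Y$ are $C^\infty$ in $(x,t)$ for $\lambda$ bounded away from the contours.

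The main obstacle I anticipate is verifying that the determinant $c$ appearing in the symmetrization formula \eqref{eq:N-Z} does not vanish anywhere in $(x,t)\in\mathbb{R}\times\mathbb{R}_{\geq 0}$, since otherwise the division by $\sqrt{c}$ would destroy smoothness. At $t=0$ nonvanishing was established in section \ref{sec:initial_condition_asymptotic} via large-$|x|$ asymptotics of $\Phi^{(0)}(\lambda,x)$. For $t>0$ one proceeds by continuity, invoking a uniqueness argument analogous to Proposition \ref{prop:uniq_init} to argue that a zero of $c$ at some $(x_*,t_*)$ would be incompatible with the unimodular normalization of Riemann--Hilbert problem \ref{rhp2} already guaranteed by solvability. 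A secondary technical issue is the interplay between the pole of the Cauchy kernel $C(\mu,\lambda)$ at $\mu=\ii K'$ and the singularities of $w_1w_2w_3$ at the four distinguished points of the fundamental domain; once again the vanishing condition \eqref{r_property_6} is precisely calibrated to neutralize these, ensuring all integrals remain convergent and the Fredholm theory goes through unchanged.
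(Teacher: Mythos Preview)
Your proposal is correct and follows essentially the same route as the paper: solvability is delegated to Rodin's result \cite{rodin1989}, and smoothness in $(x,t)$ is attributed to the standard singular integral equation argument (the paper cites \cite{zhou1989riemann} for the Riemann sphere case and declares the extension to the torus straightforward, omitting details). The paper in fact provides only a brief paragraph rather than a formal proof, so your sketch is actually more detailed than the paper's own treatment; in particular, your explicit identification of property \eqref{r_property_6} as the mechanism controlling the singularities of $p(\lambda,\varkappa)$ at $0,2K,2\ii K',2K+2\ii K'$, and your flagging of the nonvanishing of the symmetrization determinant $c$ as a point requiring care, go beyond what the paper spells out.
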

Solvability of the RHP \ref{rhp2} was demonstrated in \cite{rodin1989}. The smoothness in $x$ and $t$ follows from property
(6) of the reflection coefficient $\sfr(\lambda)$. Strictly speaking, this fact is a standard properties of the Riemann-Hilbert
problems posed on the Riemann sphere (see e.g. \cite{zhou1989riemann}). the extension to the Torus is quite straightforward and
we omit it for the sake of not making our paper too long.

As usual with the integrable equations, the RHP above is instrumental in studying the asymptotics of the LL equation in the following way. First, it gives the solution to the linear system \eqref{def:LaxU}, \eqref{def:LaxV}. Secondly, studying its asymptotics will lead to the asymptotic behaviour of $L(x,t)$ in \eqref{def:eqLL}. We start  with the following results.
\begin{proposition}\label{prop:lax_pair_proof}
   The function 
   \begin{align}
      \Psi(\lambda,x,t)=Y(\lambda,x,t)\ee^{-\ii t p(\lambda, \varkappa)  \sigma_3}
   \end{align}
   solves \eqref{def:LaxU}, \eqref{def:LaxV} with $L_j(x,t)$ given by \eqref{sol_LL_RHP_init_1}.
\end{proposition}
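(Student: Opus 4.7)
The plan is to verify the two Lax equations by computing the logarithmic derivatives $(\partial_x\Psi)\Psi^{-1}$ and $(\partial_t\Psi)\Psi^{-1}$ and identifying them with the matrices $U$ and $V$ from \eqref{def:UV}. The twisting factor $E(\lambda,x,t):=\ee^{-\ii t p(\lambda,\varkappa)\sigma_3}$ is engineered precisely so that $\Psi = YE$ has an $(x,t)$-independent jump matrix: a direct substitution into \eqref{rhp2:jump} gives $\Psi_+ = \Psi_-\,\widetilde G$ with $\widetilde G(\lambda) = \bigl(\begin{smallmatrix} 1+|\sfr|^2 & \overline{\sfr}\\ \sfr & 1\end{smallmatrix}\bigr)$, independent of $x$ and $t$. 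Consequently $(\partial_x\Psi)\Psi^{-1}$ and $(\partial_t\Psi)\Psi^{-1}$ extend without jump across $\Gamma_1\cup\Gamma_2$ and are meromorphic on $\mathbb{T}^2$, with possible singularities confined to the four half-periods $\lambda\in\{0,2K,2\ii K',2K+2\ii K'\}$ where $w_1,w_2,w_3$ have simple poles.

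For the $x$-equation, I would use $\partial_x(tp)=w_3(\lambda)$ to write
\begin{align*}
(\partial_x\Psi)\Psi^{-1} = (\partial_x Y)Y^{-1} - \ii\, w_3(\lambda)\, Y\sigma_3 Y^{-1},
\end{align*}
so that the polar contribution at each half-period comes entirely from the second summand. Near $\lambda=0$, $w_3(\lambda)\sim\rho/\lambda$ together with \eqref{sol_LL_RHP_init_1} give residue $-\ii\rho\sum_j L_j(x,t)\sigma_j$, which matches the residue of $U(\lambda,x,t)$ at $\lambda=0$ computed from \eqref{def:UV} using $w_j(\lambda)\sim\rho/\lambda$. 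The residues at the other three half-periods are obtained by combining the symmetries \eqref{rhp2:symmetry} of $Y$ with the symmetries \eqref{symm:U} of $U$ and the sign pattern of the residues of the $w_j$'s recorded in \eqref{symm:w1}--\eqref{symm:w3}. The difference $(\partial_x\Psi)\Psi^{-1}-U$ is then a pole-free elliptic function, hence a constant matrix $C(x,t)$; it is traceless since $\det\Psi\equiv 1$, and the constraints $\sigma_3 C\sigma_3=C$ and $\sigma_1 C\sigma_1=C$ together with tracelessness force $C=0$.

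The $t$-equation is handled by the same pole-matching recipe, but now $\partial_t(tp)=-2w_1(\lambda)w_2(\lambda)$, so
\begin{align*}
(\partial_t\Psi)\Psi^{-1} = (\partial_t Y)Y^{-1} + 2\ii\, w_1(\lambda)w_2(\lambda)\, Y\sigma_3 Y^{-1},
\end{align*}
which has a double pole at each half-period. The leading $1/\lambda^2$ coefficient at $\lambda=0$ is $2\ii\rho^2\sum_j L_j\sigma_j$, in agreement with the leading coefficient of $V$. The hard part will be matching the $1/\lambda$ coefficient, which requires identifying $\partial_\lambda(Y\sigma_3 Y^{-1})\vert_{\lambda=0}$ with $\tfrac{1}{2\rho}\sum_j P_j\sigma_j$ for $P_j=(\partial_x L\times L)_j$ as in \eqref{eq:momentum}. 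I would extract this identification from the $x$-equation just established: rewriting $\partial_x\Psi=U\Psi$ through $\Psi=YE$ gives $\partial_x Y=UY+\ii w_3 Y\sigma_3$, and expanding this relation in Laurent series at $\lambda=0$ and reading off the coefficient of $\lambda^0$ expresses $\partial_x Y_0 Y_0^{-1}$ in terms of the first two Taylor coefficients of $Y$ at $\lambda=0$. The vector identity $|L|^2=1$ (hence $L\cdot\partial_x L=0$) together with the double cross-product formula $(\partial_x L\times L)\times L=-\partial_x L$ then converts this into exactly the momentum combination demanded by \eqref{def:UV}. Once the principal parts at $\lambda=0$ match, the remaining three half-periods are handled by the symmetries \eqref{rhp2:symmetry}, and the residual constant is eliminated by the same trace/symmetry argument as in the $x$-case, completing the proof.
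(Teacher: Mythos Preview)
Your argument is correct and follows the same Liouville-type scheme as the paper: the undressed jump is $(x,t)$-independent, so $(\partial_x\Psi)\Psi^{-1}$ and $(\partial_t\Psi)\Psi^{-1}$ are elliptic, and one identifies them with $U$ and $V$ by matching principal parts at the four half-periods and killing the residual constant via trace and the $\sigma_1$/$\sigma_3$ symmetries.

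The one substantive difference is how the simple-pole term in the $t$-equation is pinned down. The paper leaves the coefficients $d_j$ abstract (coming from $\Psi_2\sigma_3\Psi_1^{-1}-\Psi_1\sigma_3\Psi_1^{-1}\Psi_2\Psi_1^{-1}$), establishes the form \eqref{eq:B-ansatz}, and then invokes the compatibility (zero-curvature) relation between \eqref{eq:A-ansatz} and \eqref{eq:B-ansatz} to force $d_j=(\partial_x L\times L)_j$. You instead feed the already-established $x$-equation back in at order $\lambda^0$ to obtain
\[
(\partial_x Y_0)Y_0^{-1}=\ii\rho\,[\,Y_1Y_0^{-1},\,\mathbf L\,],\qquad \mathbf L:=\sum_j L_j\sigma_j,
\]
and then identify this with $\tfrac{\ii}{2}\sum_j P_j\sigma_j$. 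That route is legitimate, but your invocation of ``$|L|^2=1$ and the triple product identity'' only checks that $\tfrac{\ii}{2}\sum P_j\sigma_j$ satisfies $[A,\mathbf L]=\partial_x\mathbf L$; it does not by itself exclude an additive piece proportional to $\mathbf L$. The missing observation is that the displayed commutator anticommutes with $\mathbf L$ (because $\mathbf L^2=\mathbb 1$), hence its Pauli coefficients are orthogonal to $L$, which is exactly what forces the $\mathbf L$-component to vanish and yields $A=\tfrac{\ii}{2}\sum_j P_j\sigma_j$. With that sentence added, your argument is complete and slightly more direct than the paper's compatibility appeal; the paper's version has the side benefit of simultaneously exhibiting that the $L_j$ solve the LL equation.
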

\begin{proof}
Let us begin with the logarithmic derivative of $\Psi(x,t)$ w.r.t $x$. We can notice that the expression $\partial_{x} \Psi(\lambda, x, t) \Psi(\lambda, x, t)^{-1}$ has no jump, {\it i.e}
    \begin{align}
        \partial_{x}\Psi_{+}(\lambda, x, t) \Psi_{+}(\lambda, x, t)^{-1} =  \partial_x \Psi_{-}(\lambda, x, t) \Psi_{-}(\lambda, x, t)^{-1}.
    \end{align}
Therefore    $\partial_{x} \Psi(\lambda, x, t) \Psi(\lambda, x, t)^{-1} $ is an elliptic function. We analyse its behaviour at $\lambda\to 0$:
    \begin{align}
    \partial_{x} \Psi(\lambda, x, t) \Psi(\lambda, x, t)^{-1} = \partial_{x} Y(\lambda, x, t) Y(\lambda, x, t)^{-1} - \ii w_{3}(\lambda) Y(\lambda, x, t) \sigma_{3} Y(\lambda, x, t)^{-1},
\end{align}
We rewrite it using notation 
\begin{align}\label{eq:asymp_Psi}
    \partial_{x} \Psi(\lambda, x, t) \Psi(\lambda, x, t)^{-1} = - \ii w_{3}(\lambda) \Psi_1(x, t) \sigma_{3} \Psi_1(x, t)^{-1} + \mathcal{O}(1),\quad \lambda\to 0.
\end{align}
We conclude that $\partial_{x} \Psi(\lambda, x, t) \Psi(\lambda, x, t)^{-1} $ has a simple pole at $\lambda=0$ with residue $$- \ii w_{3}(\lambda) \Psi_1(x, t) \sigma_{3} \Psi_1(x, t)^{-1}.$$ Moreover, it satisfies the symmetries
\begin{align}\label{eq:log_der_symm_1}
 \sigma_{3}  \partial_{x}\Psi(\lambda+2K, x, t) \Psi(\lambda+2K, x, t)^{-1} \sigma_{3} &=  \partial_{x}\Psi(\lambda, x, t) \Psi(\lambda, x, t)^{-1}, \\
 \sigma_{1}  \partial_{x}\Psi(\lambda+2\ii K', x, t) \Psi(\lambda+2\ii K', x, t)^{-1} \sigma_{1} &=  \partial_{x}\Psi(\lambda, x, t) \Psi(\lambda, x, t)^{-1},\label{eq:log_der_symm_2}
\end{align}
which imply the presence of simple poles at $\lambda=2K, 2\ii K', 2K+2\ii K'$ with residues \begin{align*}
    - \ii \sigma_3 w_{3}(\lambda) \Psi_1(x, t) \sigma_{3} \Psi_1(x, t)^{-1}\sigma_3, &&
    - \ii \sigma_1 w_{3}(\lambda) \Psi_1(x, t) \sigma_{3} \Psi_1(x, t)^{-1}\sigma_1, 
    \end{align*}
    \begin{align*}
    - \ii \sigma_2 w_{3}(\lambda) \Psi_1(x, t) \sigma_{3} \Psi_1(x, t)^{-1}\sigma_2
\end{align*} respectively. 
Let's write 
\begin{align}\label{eq:this}
   -\ii \Psi_1(x, t) \sigma_{3} \Psi_1(x, t)^{-1} {=:} - \ii \sum_{j=1}^3 c_{j}(x,t) \sigma_j.
\end{align}
Since the determinant of both the sides above is one, we deduce that $c_j(x,t)$ is a unit vector. Using the properties \eqref{symm:w1}-\eqref{symm:w3} we can notice that the expression\\ $-\ii\sum_{j=1}^3 c_{j}(x,t) \sigma_j w_j(\lambda)$ is an elliptic function which has the same residues as \\ $-\ii \Psi_1(x, t) \sigma_{3} \Psi_1(x, t)^{-1}$ at $\lambda=0, 2K, 2K+2\ii K', 2K+2\ii K'$. Therefore

\begin{align}\label{eq:A-ansatz}
    \partial_{x} \Psi(\lambda, x, t) \Psi(\lambda, x, t)^{-1} = -\ii \sum_{j=1}^3 c_{j}(x,t) \sigma_j w_j(\lambda)+\mathrm{const}.
\end{align}

\noindent Symmetries \eqref{eq:log_der_symm_1}-\eqref{eq:log_der_symm_2} imply that the constant matrix above is diagonal.
Since \\$\det(\Psi(\lambda,x,t))=1$, trace of the right hand side of \eqref{eq:A-ansatz} is zero and the constant matrix above is zero.

We can carry out a parallel study for the $t$ derivative. The above properties (1), (2) hold and the logarithmic derivative
    \begin{align}
    \partial_{t} \Psi(\lambda, x, t) \Psi(\lambda, x, t)^{-1} = \partial_{t} Y(\lambda, x, t) Y(\lambda, x, t)^{-1} +2\ii w_{1}(\lambda) w_2(\lambda) Y(\lambda, x, t) \sigma_{3} Y(\lambda, x, t)^{-1}.
\end{align}
In the limit $\lambda\to 0$, we consider the subleading term
\begin{align}
    Y(\lambda,x,t) = \Psi_1(x,t) + \lambda\, \Psi_2(x,t)+ \mathcal{O}(\lambda^2),
\end{align}
and substituting in the expression above, we obtain that
\begin{align}
    \partial_{t} \Psi(\lambda, x, t) \Psi(\lambda, x, t)^{-1} &= 2\ii w_{1}(\lambda) w_2(\lambda)  \Psi_1(x, t) \sigma_{3} \Psi_1(x, t)^{-1} \nonumber\\
    &+2 \ii \lambda w_{1}(\lambda) w_2(\lambda) \left( \Psi_{2}(x,t) \sigma_3 \Psi_1(x,t) - \Psi_{1}(x,t) \sigma_3 \Psi_2(x,t)\right)+ \mathcal{O}(1).
    \end{align}
    The expression $\partial_{t} \Psi(\lambda, x, t) \Psi(\lambda, x, t)^{-1}$ therefore has terms with double poles and simple poles at $\lambda=0$. Along with \eqref{eq:this} we write 
\begin{align}\label{eq:that}
   2 \ii\left( \Psi_{2}(x,t) \sigma_3 \Psi_1(x,t) - \Psi_{1}(x,t) \sigma_3 \Psi_2(x,t)\right) {=} \ii \sum_{j=1}^3 d_{j}(x,t) \sigma_j.
\end{align}
    Producing the argument similar to above one gets the following form
    \begin{align}
      \partial_{t} \Psi(\lambda, x, t) \Psi(\lambda, x, t)^{-1} &=    2\ii c_1(x,t) w_{2}(\lambda)w_{3}(\lambda) \sigma_1 + 2\ii c_2(x,t)  w_{3}(\lambda)w_{1}(\lambda)\sigma_2 \nonumber\\+2\ii c_3(x,t)  w_{1}(\lambda)w_{2}(\lambda)\sigma_3 
      &+ \ii d_1(x,t) w_{1}(\lambda) \sigma_1 + \ii d_2(x,t) w_{2}(\lambda) \sigma_2 + \ii d_3(x,t) w_{3}(\lambda) \sigma_3.\label{eq:B-ansatz}
    \end{align}
    The compatibility relation of \eqref{eq:A-ansatz}, \eqref{eq:B-ansatz} implies that $d_j(x,t)=\left({\partial_x c}(x,t) \times c(x,t)\right)_j$ and $c_j(x,t)$ solve the equations \eqref{def:eqLL}. Therefore, $c_i(x,t) = L_i(x,t)$, $d_i(x,t) =P_i(x,t)$ and \eqref{eq:this} becomes \eqref{sol_LL_RHP_init_1}. 
\end{proof}

\begin{proposition}
The solutions of the Landau-Lifshitz equation $L_j(x,t)$ are real valued.
\end{proposition}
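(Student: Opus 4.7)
The plan is to establish a conjugation symmetry for $Y(\lambda,x,t)$ and then evaluate it at $\lambda=0$ in the reconstruction formula \eqref{sol_LL_RHP_init_1}. Concretely, I would define
\begin{equation*}
\widetilde{Y}(\lambda,x,t) := \sigma_2 \,\overline{Y(\bar{\lambda},x,t)}\, \sigma_2,
\end{equation*}
show that $\widetilde{Y}$ solves the same Riemann-Hilbert problem \ref{rhp2} as $Y$, and invoke uniqueness up to sign to conclude $\widetilde{Y}(\lambda,x,t) = \epsilon\, Y(\lambda,x,t)$ for some $\epsilon = \pm 1$. The sign $\epsilon$ will drop out of the reconstruction formula because $Y(0,x,t)$ enters quadratically in the combination $Y \sigma_3 Y^{-1}$.

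The main technical step is the jump condition. Since complex conjugation interchanges $\Omega_+$ and $\Omega_-$, a direct computation shows that $\widetilde{Y}$ has jump $\widetilde{G}(\lambda,x,t) = \sigma_2\, \overline{G^{-1}(\lambda,x,t)}\, \sigma_2$ on $\Gamma_1 \cup \Gamma_2$. Using the $SL_2$ identity $\sigma_2 M^{-T}\sigma_2 = M$ valid for matrices of unit determinant, $\widetilde{G} = G$ is equivalent to $G$ being Hermitian, $G = \overline{G}^T$. On $\Gamma_1$ this is immediate because $p(\lambda,\varkappa)$ is real for real $\lambda$. On $\Gamma_2$, where $\lambda = \xi + 2\ii K'$ with $\xi \in \mathbb{R}$, the Jacobi shift identities yield $w_1(\lambda+2\ii K') = w_1(\lambda)$, $w_2(\lambda+2\ii K') = -w_2(\lambda)$, $w_3(\lambda+2\ii K') = -w_3(\lambda)$, so $p(\xi + 2\ii K', \varkappa) = -p(\xi, \varkappa) \in \mathbb{R}$, and the symmetry $\sfr(\lambda+2\ii K') = -\overline{\sfr(\bar\lambda)}$ from property \eqref{r_property_4} arranges the off-diagonal entries of $G$ in Hermitian form. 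The two period-symmetry conditions \eqref{rhp2:symmetry} for $\widetilde{Y}$ follow from $\sigma_3 \sigma_2 \sigma_3 = -\sigma_2$ and $\sigma_1 \sigma_2 \sigma_1 = -\sigma_2$, which absorb the extra Pauli conjugation under the conjugate-and-shift operations; boundedness, double periodicity and $\det \widetilde{Y} = 1$ are inherited from $Y$.

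With $\widetilde{Y} = \epsilon\, Y$ established, evaluation at $\lambda = 0$ gives $\sigma_2 \overline{Y(0,x,t)} \sigma_2 = \epsilon\, Y(0,x,t)$. Substituting in \eqref{sol_LL_RHP_init_1} and using $\sigma_2 \sigma_3 \sigma_2 = -\sigma_3$ together with $\overline{\sigma_2} = -\sigma_2$ produces
\begin{equation*}
\overline{L_1}\sigma_1 - \overline{L_2}\sigma_2 + \overline{L_3}\sigma_3 \;=\; \overline{\sum_{j=1}^{3} L_j(x,t)\, \sigma_j} \;=\; -\sigma_2 \Bigl(\sum_{j=1}^{3} L_j(x,t)\, \sigma_j\Bigr) \sigma_2 \;=\; L_1\sigma_1 - L_2\sigma_2 + L_3\sigma_3,
\end{equation*}
and linear independence of the Pauli matrices yields $\overline{L_j(x,t)} = L_j(x,t)$ for $j = 1,2,3$, which is the claim. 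The main obstacle will be the bookkeeping of the jump on $\Gamma_2$: one must verify that the shifted value of $p$ is real and that $\sfr$ combines correctly with the $2\ii K'$-shift to preserve Hermiticity of $G$. Once that is handled the argument closes quickly.
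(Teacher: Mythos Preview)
Your approach is correct and matches the paper's proof: both establish the $\sigma_2$-conjugation symmetry $\sigma_2\overline{Y(\bar\lambda,x,t)}\sigma_2=\pm Y(\lambda,x,t)$ from the Hermitian structure of the jump matrix $G$, invoke uniqueness up to sign, and then evaluate at $\lambda=0$ in the reconstruction formula. The paper's proof is terse (it simply asserts the symmetry and its origin), whereas you spell out the jump verification on $\Gamma_2$ and the Pauli-matrix algebra at the end, but the argument is the same.
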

\begin{proof}
  The solution of the Riemann-Hilbert problem \eqref{rhp2} has the symmetry
  \begin{align}
      \Psi(\lambda,x,t) =\pm \sigma_2 \overline{\Psi(\overline{\lambda},x,t)} \sigma_2,
  \end{align}
  as can be seen through the symmetry of the matrix $G(\lambda, x,t)$ in \eqref{rhp2:jump}. Plugging in the behaviour at $\lambda=0$ we see that $L_j(x,t)$ are real valued.
\end{proof}

 \section{Nonlinear steepest descent analysis}\label{sec:NSD}
We are going to develop nonlinear steepest descent method to compute asymptotics of $L(x,t)$ as $t\to \infty$ and $0< m\leq\varkappa=\frac{x}{t}\leq M$. The procedure of nonlinear steepest descent involves constructing the solution of RHP with jump close to identity. This construction is performed in several steps. First we construct the analytic continuations of jump matrices and use them to open lenses. Then we construct global parametrix in Subsection \ref{subsec:global} using the fact that the corresponding jumps are diagonal. Finally we construct local parametrices in Subsection \ref{subsec:local} in the neighbourhood of stationary points of characteristic exponent, which in the present case is $p(\lambda,\varkappa)$ defined in \eqref{def:plambda}. We then use the singular integral equation \eqref{def:W} and symmetrisation formula \eqref{eq:N-Z} to implement the analog of small norm theorem in our setup (see Subsection \ref{subsec:small_norm}).

The characteristic exponent $p(\lambda,\varkappa)$ has the periodicity properties
\begin{align}\label{symm:p}
    p(\lambda + 2K,\varkappa) = p(\lambda,\varkappa), && p(\lambda + 2\ii K',\varkappa) = - p(\lambda,\varkappa),
\end{align}
which can determined through the equations using identities \eqref{eq:w-derivatives}. Compute the derivative
\begin{equation}
    \partial_\lambda p(\lambda,\varkappa)
    =\frac{1}{\rho}\left(2w_3(\lambda)(w_1^2(\lambda)+w_2^2(\lambda))-{\varkappa}w_1(\lambda)w_2(\lambda)\right). \label{eq:pprimelambda}\end{equation}
We are interested in the critical points which are zeroes of $\partial_\lambda p(\lambda,\varkappa)$. It is an elliptic function with periods $2K$, $2\ii K'$ and pole of third order at $\lambda=0$. Therefore it has three zeroes at the quadrant $-2K\leq\mathrm{Re}(\lambda)\leq 0$, $0\leq \mathrm{Im}(\lambda)\leq 2\ii K'$. We are interested at the real zeroes. We consider the ratio \begin{equation}
W(\lambda)=\dfrac{w_3(\lambda)(w_1^2(\lambda)+w_2^2(\lambda))
}{w_1(\lambda)w_2(\lambda)}=\rho\dfrac{\cn(\lambda,k)(\dn^2(\lambda,k)+1)}{\sn(\lambda,k)\dn(\lambda,k)}.
\end{equation}
Using differential identities \cite[\href{http://dlmf.nist.gov/22.13.T1 }{ 23.13.1}]{DLMF} and elementary identities \cite[\href{http://dlmf.nist.gov/22.6.E1 }{ 22.6.1}]{DLMF} we compute \begin{equation}
W'(\lambda)=\rho\dfrac{3k^2\sn^2(\lambda,k)-2-k^4\sn^6(\lambda,k)}{\sn^2(\lambda,k)\dn^2(\lambda,k)}.
\end{equation}
The numerator of the expression above attains maximum of $2(k-1)$ for $\sn^2(\lambda,k)=k^{-1}$, which implies that $W'(\lambda)<0$ and $W(\lambda)$ is monotone for $-2K<\lambda<0$.    That means that we have only one stationary point on the segment $\left[-2K,0 \right] \subset\Gamma_1$, which we denote $\lambda_0$.
\begin{equation}
     \partial_\lambda p(\lambda_0,\kappa)=0, \quad\lambda_0<0
\end{equation}

\noindent The periodicity properties of $p(\lambda,\varkappa)$ in \eqref{symm:p} then imply that the points $\lambda_0+2K$, $\lambda_0+2\ii K'$, $\lambda_0+2K+2\ii K'$ are also stationary points of the characteristic exponent. 
We illustrate the sign chart of $\Im(p(\lambda,\varkappa))$ on Figure \ref{fig:sdc}. We see that there is only one stationary point on the interval $[-2K,0]$.
\begin{figure}[h!]
 \includegraphics[trim={0cm 0cm 0cm 0cm},clip, width=12cm]{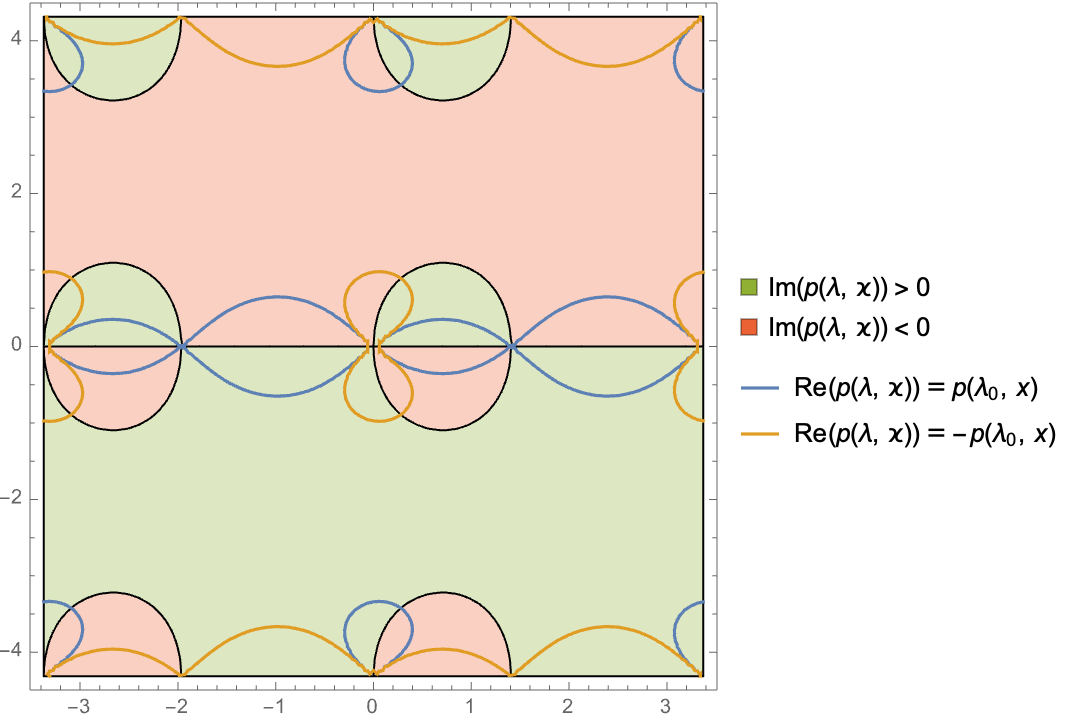}
\caption{Take $\varkappa=1$, $k=\frac{1}{2}$. We draw the following sign chart: the red and green parts denote the negative and positive domains of the imaginary part of the characteristic exponent $\Im(p(\lambda,\varkappa))$. We also draw the stokes lines $\Re(p(\lambda,\varkappa)=p(\lambda_0,\varkappa)$ centered at $\lambda_0$, $\lambda_0+2K$ with blue color and the stokes lines $\Re(p(\lambda,\varkappa)=-p(\lambda_0,\varkappa)$ centered at $\lambda_0+2\ii K'$, $\lambda_0+2K+2\ii K'$ with orange color.}\label{fig:sdc}
\end{figure}
We now restrict to the neighbourhood of the point $\lambda_0$ and define the subdomain to the torus
\begin{align*}
   \mathbb{T}^2(\lambda_0):=\left\lbrace \lambda: -2K\leq \Re \lambda\leq 0, |\Im \lambda|\leq K'\right\rbrace.
\end{align*}
The lens opening procedure is then carried out by noting that the jump matrix $G(\lambda,x,t)$ defined in \eqref{rhp2:jump} can be factorized in terms of upper triangular, lower triangular, and diagonal matrices as 
\begin{align}
    G &= \left( \begin{array}{cc}
      1    & \bar{{\sf r}} \ee^{-2 \ii t p} \\
     0  & 1 
    \end{array} \right) \left( \begin{array}{cc}
      1    & 0 \\
       {\sf r} \ee^{2 \ii t p}  & 1 
    \end{array} \right)=\widetilde{U}\widetilde{L}  \label{facG:1}\\
     & =\left( \begin{array}{cc}
      1 & 0 \\
       \frac{{\sf r} \ee^{2 \ii t p}}{ 1 + \vert {\sf r} \vert^2}  & 1 
    \end{array} \right)  \left( \begin{array}{cc}
      1 + \vert {\sf r} \vert^2   & 0\\
       0 & \frac{1}{ 1 + \vert {\sf r} \vert^2} 
    \end{array} \right) \left( \begin{array}{cc}
      1   & \frac{\bar{{\sf r}} \ee^{-2 \ii t p}}{ 1 + \vert {\sf r} \vert^2} \\
       0  & 1 
    \end{array} \right)=LDU\label{facG:2}.
\end{align}

In order to define the lens opening for generic reflection coefficient $\sf r$, we need to introduce the analytic continuation of the reflection coefficient in the upper and lower half-planes. Based on Figure \ref{fig:sdc} we can notice that the function $p(\lambda,\varkappa)$ maps the lenses to the upper and lower half-planes respectively, and the segment $[-2K,\lambda_0]$ to the half line $(-\infty,p(\lambda_0,\varkappa)]$.

Following \cite{deift_zhou_annals,deift1994long} we now consider the Taylor polynomial of degree $k$ of ${\sf r}(\lambda(p))(p+\ii)^n$ at $p_0=p(\lambda_0,\varkappa)$, $n>k$. Denote its error term by ${\mathcal{E}}_k(p)$. We then observe that ${\mathcal{E}}_k(p)$ vanishes up to the $k$th derivative at $p_0$, and continue it by zero for $p>p_0$. 

The analytic continuation can be implemented using Fourier transform in a similar way to \eqref{eq:tzerofourier}: 
\begin{align}
{\sf r}(\lambda(p))\ee^{2\ii t p}&={\sf r}_{dec}(p,t)+{\sf r}_{ana}(p,t)\label{eq:analytic_continuation},
\end{align}
where
\begin{align}
&{\sf r}_{dec}(p,t):=\intop_{t}^{\infty}\widehat{{\mathcal{R}}_k}(s,t) \ee^{\ii (2t-s) p}ds,\\& {\sf r}_{ana}(p,t):=\frac{\ee^{2\ii t p}}{(p+\ii)^n}\sum_{j=0}^k\dfrac{\partial^j ({\sf r}(\lambda(p))(p+\ii)^n)}{\partial p^j}(p_0)\dfrac{(p-p_0)^j}{j!}+\intop_{-\infty}^{t}\widehat{{\mathcal{R}}_k}(s,t)  \ee^{\ii (2t-s) p}ds,
\end{align}
and the Fourier transform of the reflection coefficient
\begin{align}
\widehat{{\mathcal{R}}_k}(s,t) =\frac{1}{2\pi}\intop_{-\infty}^{\infty}{\mathcal{R}}_k(\lambda(p),t)\ee^{\ii s p}dp, && {\mathcal{R}}_k(\lambda(p),t)=\frac{{\mathcal{E}}_k(\lambda(p),t)}{(p+\ii)^n}.
    \end{align}
We see that ${\sf r}_{ana}(p(\lambda),t)$ is analytic in the upper lens and ${\sf r}_{dec}(p(\lambda),t)=O(t^{-k})$, $t\to\infty$.
One can now mimic \cite{deift1994long} and  proceed with the lens opening procedure, by taking into account the analytic part of $\sfr(\lambda)$ in the factorization of the jump matrix $G$.
    
    In order to simplify the corresponding analysis, we shall, in fact,
    assume, from the very beginning, that the   reflection coefficient $\sfr(\lambda)$
    enjoys the proper analytic properties which  would allow us to use the factorizations
    \eqref{facG:1} and \eqref{facG:2} directly. That is, we would not need to use splitting  \eqref{eq:analytic_continuation} for performing the
    lens opening. Following the same arguments as in \cite{deift1994long}, one can then show that
     the presence of $\sfr_{dec}$  will not affect  the given in Theorem \ref{thm:asymp_LL}  leading terms
    of the large time  asymptotics of the Cauchy problem for the LL equations.

\subsection*{Lens opening considering analytic reflection coefficient:}
Assuming $\sf r$ is analytic in the upper lenses, we can open lenses in the usual way.
The resulting contour $\Sigma$ is depicted in Figure \ref{fig:lens_contour1}. We then have the following identification:
\begin{align}\label{domsubdom:Sigma}
    \Sigma= \Sigma(\lambda_0)+\Sigma(\lambda_0+2K)+ \Sigma^{-1}(\lambda_0+2\ii K')+ \Sigma^{-1}(\lambda_0+2K+2\ii K')
\end{align}
where $\Sigma^{-1}(\lambda_0+2\ii K')$ and $\Sigma^{-1}(\lambda_0+2K+2\ii K')$ have arrows inverted compared to $\Sigma(\lambda_0+2\ii K')$ and $\Sigma(\lambda_0+2K+2\ii K')$.
Similarly, the torus $\mathbb{T}^2$ is given by the sub-domain $\mathbb{T}^2(\lambda_0)$ with appropriate shifts
\begin{align}\label{domsubdom:T}
    \mathbb{T}^2= \mathbb{T}^2(\lambda_0) + \mathbb{T}^2(\lambda_0+ 2K) + \mathbb{T}^2(\lambda_0+ 2\ii K') + \mathbb{T}^2(\lambda_0+2K+2\ii K').   
\end{align}
Contour $\Sigma(\lambda_0)$ and domain $\mathbb{T}^2(\lambda_0)$ are depicted in Figure \ref{fig:lens_contour}.

\begin{figure}[htb]
            \centering
            \includegraphics[trim={0cm 0cm 0cm 0cm},clip, width=15cm]{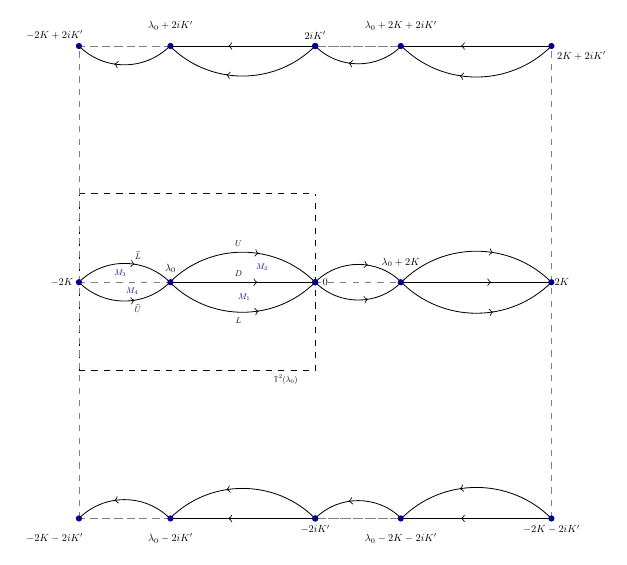}            \caption{Jump contour $\Sigma$ on the fundamental domain.}
            \label{fig:lens_contour1}
        \end{figure}
        Restricting to $ \Sigma(\lambda_0)$ in the domain $\mathbb{T}^2(\lambda_0)$ and
define a piecewise analytic function $M(\lambda,x,t)$ indicated with blue font on Figure \ref{fig:lens_contour} where

\begin{align}
         M_1 = L^{-1}, \quad
          M_2 = U, \quad
          M_3= \widetilde{L}, \quad
          M_4 = \widetilde{U}^{-1}.
\end{align}
where $L, U, \widetilde{L}, \widetilde{U}$ are indicated in \eqref{facG:1}, \eqref{facG:2}. On other parts of torus it is defined by symmetries
\begin{equation}
    M(\lambda+2K,x,t)=\sigma_3 M(\lambda,x,t)\sigma_3,\quad  M(\lambda+2\ii K',x,t)=\sigma_1 M(\lambda,x,t)\sigma_1.
\end{equation}

We can notice that
\begin{align}\label{asymp:M}
    M(\lambda,x,t) =\mathbb{1}+ \mathcal{O}(\lambda),\quad \lambda \to 0.
\end{align}
Moreover
\begin{align}\label{asymp_t:M}
    M(\lambda,x,t) =\mathbb{1}+ \mathcal{O}(t^{-k}),\quad \forall k\in\mathbb{N}\quad t\to\infty,\quad \lambda\neq \lambda_0,\lambda_0+2K,\lambda_0+2\ii K', \lambda_0+2K+2\ii K'.
\end{align}
We then define the function $T(\lambda,x,t)$ as a ratio of $M(\lambda,x,t)$ and the solution of the RHP \ref{rhp2}:
\begin{align}\label{transf:YT}
    T(\lambda,x,t):= Y(\lambda,x,t) M(\lambda,x,t)^{-1}.
\end{align}
        Note that the contour in Figure \ref{fig:lens_contour1} is simply the contour $\Sigma(\lambda_0)$ defined in the Figure \ref{fig:lens_contour} with appropriate shifts by $2K$, $2\ii K'$, and $2K + 2\ii K'$.

        \begin{figure}[H]
            \centering
            \includegraphics[trim={0cm 0cm 0cm 0cm},clip, width=14cm]{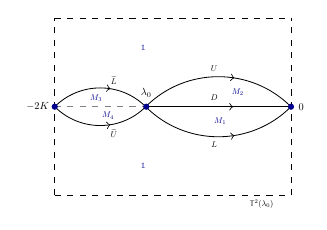}            \caption{Contour $\Sigma(\lambda_0)$ in the sub-domain $\mathbb{T}^2(\lambda_0)$.}
            \label{fig:lens_contour}
        \end{figure}

The following RHP is then solved by $T(\lambda,x,t)$.
\begin{rhp}\label{rhp:3}
    \begin{enumerate}
        \item The function $T(\lambda,x,t)$ is piecewise analytic on $\mathbb{T}^2\backslash \Sigma$,
        \item For $\lambda\in \Sigma$, it satisfies the jump condition
        \begin{align}
            T_{+}(\lambda,x,t) = T_{-}(\lambda,x,t) G_T(\lambda,x,t),
        \end{align}
        with the jump $G_T(\lambda,x,t)$ defined through \eqref{facG:1}, \eqref{facG:2} on each of the shifted contours $\Sigma(\lambda_0)$ as shown in the Figures \ref{fig:lens_contour}, \ref{fig:lens_contour1}.

        \item The symmetries conditions hold \begin{align}\label{rhp3:symmetry}
 \sigma_{3} T(\lambda + 2K,x,t) \sigma_{3} = T(\lambda,x,t), &&
 \sigma_{1} T(\lambda + 2\ii K',x,t) \sigma_{1} = T(\lambda,x,t).
\end{align}
    
\item The normalization $\det(T(\lambda,x,t))=1$ holds.
\end{enumerate}

\end{rhp}
We can notice that
        \begin{align}
            T(\lambda,x,t)= \Psi_1(x,t) + \mathcal{O}(\lambda),\quad \lambda\to 0.
        \end{align}

Also, on the contour $\Sigma (\lambda_0)$, the jump matrix $G_T$ approaches the identity as $t \to \infty$. Consequently, the asymptotic analysis of the RHP \ref{rhp:3} reduces to solving the corresponding global and local parametrices.

\subsection{Global parametrix}\label{subsec:global}
The {\it global} parametrix $T^{(gl)}(\lambda)$ solves the RHP below with the diagonal jump $D$ defined in \eqref{facG:2}.

\begin{rhp}\label{rhp:global_par}
\begin{enumerate}
\item The $t$-independent function $T^{(gl)}(\lambda)$ is analytic in \\$\mathbb{T}^2\backslash\left(\left[\lambda_0,0\right]\cup \left[\lambda_0+2K,2K\right]\cup \left[\lambda_0+2\ii K',2\ii K'\right]\cup \left[\lambda_0+2K+2\ii K',2K+2\ii K'\right]\right)$,
\item For $\lambda \in \left[\lambda_0,0\right]\cup \left[\lambda_0+2K,2K\right]\cup \left[\lambda_0+2\ii K',2\ii K'\right]\cup \left[\lambda_0+2K+2\ii K',2K+2\ii K'\right]$ it has a diagonal jump
\begin{align}
    T^{(gl)}_{+}(\lambda) = T^{(gl)}_{-}(\lambda) \left( 1+ |{\sf r}(\lambda)|^2 \right)^{\sigma_3}.\label{jump:Tgl}
\end{align}
\item The symmetry conditions hold \begin{align}\label{rhp:global_symmetry}
 \sigma_{3} T^{(gl)}(\lambda + 2K) \sigma_{3} = T^{(gl)}(\lambda), &&
 \sigma_{1} T^{(gl)}(\lambda + 2\ii K') \sigma_{1} = T^{(gl)}(\lambda).
\end{align}
\item The normalization $\det(T^{(gl)}(\lambda))=1$ holds.

\end{enumerate}
\end{rhp}
The RHP above is solvable and its solution reads
\begin{align}\label{def:Tgl_alpha}
    T^{(gl)}(\lambda)= \alpha(\lambda)^{\sigma_{3}}, 
\end{align}
where 
\begin{align}\label{def:global_alpha}
 \alpha(\lambda) := \exp\left\lbrace \frac{1}{2\pi \ii}\int^{0}_{\lambda_0} \log \left( 1+ |{\sf r}(\eta)|^2 \right) \frac{w_3(\eta-\lambda)}{\rho} d\eta \right\rbrace.
\end{align}
Note that, in the above expression, the elliptic function $w_3(\lambda)$ has a simple pole at $\lambda=0, 2K, 2\ii K', 2K+2\ii K'$ (see Figure \ref{fig:foobar}). This fact together with the symmetries \eqref{symm:w3} of function $w_3(\lambda)$ imply that $T^{(gl)}(\lambda)$ satisfies jump condition \eqref{jump:Tgl} and symmetry conditions \eqref{rhp:global_symmetry}.
Near $\lambda=0$,
\begin{align}
 \alpha(0) = \exp\left\lbrace -\frac{1}{2\pi \ii}\int_{0}^{\lambda_0} \log \left( 1+ |{\sf r}(\eta)|^2 \right) \frac{w_3(\eta)}{\rho} d\eta \right\rbrace.\label{eq:phi1}
\end{align}

We would like to deduce the behavior of the global parametrix \eqref{def:Tgl_alpha} near point $\lambda=\lambda_0$. We observe that using \eqref{eq:zeta_sigma}, \eqref{eq:w3-zeta} the function $w_3(\lambda)$ can be written as a total derivative:
\begin{align}\label{def:beta}
  \frac{ w_3(\lambda)}{\rho}= \partial_{\lambda}\left(\log \beta(\lambda)\right), &&
    \beta(\lambda) := \frac{\sigma(\lambda) \sigma(\lambda-2K)}{\sigma(\lambda +2\ii K') \sigma(\lambda-2\ii K'-2K)},
\end{align}
where $\sigma(\lambda)$ is the Weierstrass sigma function. Furthermore, the behaviour of $\sigma(\lambda)$ near $\lambda\to 0$ implies the following
\begin{equation}
  \lim_{\lambda\rightarrow 0} \frac{\sigma(\lambda)}{\lambda} =1 \quad\Rightarrow\quad  \lim_{\lambda\to \lambda_0} \frac{\beta(\lambda- \lambda_0)}{\lambda - \lambda_0} = \beta_0; \qquad\label{def:beta0}
  \beta_0 := \frac{\sigma(-2K)}{\sigma(2\ii K') \sigma(-2\ii K'-2K)}.
\end{equation} 
Using \cite[\href{http://dlmf.nist.gov/23.9.E7}{(23.9.7)}]{DLMF} we see that $\sigma(2\ii K')$ is pure imaginary. Using \cite[(6.2.18),(6.2.19)]{lawden2013elliptic} we see that $\sigma(2K+2\ii K')$ is also pure imaginary making $\beta_0$ real. 

Integrating by parts, we can then rewrite $\alpha(\lambda)$ in \eqref{def:global_alpha} as
\begin{align}
    \alpha(\lambda) &= \exp\left\lbrace \frac{1}{2\pi \ii}\int^{0}_{\lambda_0} \log \left( 1+ |{\sf r}(\eta)|^2 \right) \frac{w_3(\eta-\lambda)}{\rho} d\eta \right\rbrace \nonumber \\
    &= \exp\left\lbrace \frac{1}{2\pi \ii}\int^{0}_{\lambda_0} \log \left( 1+ |{\sf r}(\eta)|^2 \right)  d \log\beta(\eta- \lambda)\right\rbrace \nonumber \\
    &=  \exp\left\lbrace -\frac{1}{2\pi \ii}\left(\log \left( 1+ |{\sf r}(\lambda_0)|^2 \right)   \log\beta(\lambda_0-\lambda) \right)\right\rbrace \\
    &\hspace*{5cm}\times\exp\left\lbrace -\frac{1}{2\pi \ii}\int^{0}_{\lambda_0} d\left(\log \left( 1+ |{\sf r}(\eta)|^2 \right) \right)  \log\beta(\eta- \lambda)\right\rbrace \nonumber \\
     &= \left( \beta(\lambda_0 - \lambda)  \right)^{\ii\nu} 
    \exp\left\lbrace -\frac{1}{2\pi \ii}\int^{0}_{\lambda_0} d\left(\log \left( 1+ |{\sf r}(\eta)|^2 \right) \right)  \log\beta(\eta- \lambda)\right\rbrace \nonumber \\
    &=:\left( \beta(\lambda_0 - \lambda)  \right)^{\ii\nu}  e^{\ii c_0(\lambda)\sigma_3}, \label{def:c0lambda}
\end{align}
with the parameter
\begin{gather}\label{def:nu}
    \nu:=\frac{1}{2\pi }\log(1+|{\sf r}(\lambda_0)|^2),
\end{gather}
and
\begin{align}\label{def:c_0}
        c_0(\lambda):=\frac{1}{2\pi }\int^{0}_{\lambda_0} d\left(\log \left( 1+ |{\sf r}(\eta)|^2 \right) \right)  \log\beta(\eta- \lambda).
\end{align}
In the computations above we used the fact that ${\sf r}(0)=0$.

Finally, with the expression above, we have
\begin{align}\label{Tgl}
    T^{(gl)}(\lambda)= \left( \beta(\lambda_0 - \lambda)  \right)^{\ii\nu\sigma_3}  \ee^{\ii c_0(\lambda)\sigma_3}.
\end{align}
This alternative formula is convenient for studying the global parametrix near $\lambda=\lambda_0$. We now proceed to obtain the local parametrix, which in the proper asymptotic regime, should coincide with $T^{(gl)}(\lambda)$.
\subsection{Local parametrix}\label{subsec:local}
Before defining the local parametrix, we analyse the lenses contours in the neighbourhood of $\lambda_0$. In the vicinity of the stationary point $\lambda_0$, the characteristic exponential function behaves as
\begin{align}
    \ee^{-\ii t \sigma_{3} p(\lambda,\varkappa)} \sim \ee^{-\ii t \sigma_{3} \left( p(\lambda_0,\varkappa)+\frac{1}{2} \partial_\lambda^2p(\lambda_0,\varkappa)(\lambda - \lambda_0)^2\right) }, && \arg (\partial_\lambda^2p(\lambda_0,\varkappa)) = \pi,
\end{align}
where
 \begin{equation}
     \partial_\lambda^2 p(\lambda,\varkappa) =-\frac{1}{\rho^2}\left( 8 w_1(\lambda) w_2(\lambda) w_3^2(\lambda)+(w_1^2(\lambda)+w_2^2(\lambda))(2w_1(\lambda)w_2(\lambda)-\varkappa w_3(\lambda) \right). 
\label{eq:ppprimelambda}
    \end{equation}
The fact that $\partial_\lambda^2p(\lambda_0,\varkappa)<0$ can be shown in the following way. First, we eliminate parameter $\varkappa$ in the expression \eqref{eq:ppprimelambda} using \eqref{eq:pprimelambda}. Then rewrite it using \eqref{def:elliptic_curve} in terms of $w_1(\lambda_0)$. The result is 
\begin{align}
\partial_\lambda^2p(\lambda_0,\varkappa)=-\frac{1}{\rho^2}\left(4w_1^6(\lambda_0)-6k^2\rho^2 w_1^4(\lambda_0)+2k^4\rho^6. \right)  
\end{align}
This expression has maximum $-\frac{2}{\rho}k^4(1-k^2)\rho^6$ at $w_1^2(\lambda_0)=k^2\rho^2$, which confirms that $\partial_\lambda^2p(\lambda_0,\varkappa)<0$. That motivates us to introduce notation for $-\partial_\lambda^2p(\lambda_0,\varkappa)$
\begin{equation}\label{def:phi_0}
    \varphi_0=\frac{1}{\rho^2}\left( 8 w_1(\lambda_0) w_2(\lambda_0) w_3^2(\lambda_0)+(w_1^2(\lambda_0)+w_2^2(\lambda_0))(2w_1(\lambda_0)w_2(\lambda_0)-\varkappa w_3(\lambda_0)) \right)
\end{equation}
A suitable conformal transformation to obtain the local parametrix is then defined by
\begin{equation}\label{def:xi}
   \xi(\lambda) :=\sqrt{4\ii t(p(\lambda_0,\varkappa)-p(\lambda,\varkappa))}\simeq \ee^{\ii \pi/4}\sqrt{2t\varphi_0} (\lambda-\lambda_0) = \gamma\, t^{1/2}(\lambda-\lambda_0),
\end{equation}
with the parameter
\begin{align}\label{def:gamma}
    \gamma:= \ee^{\ii \pi/4}\sqrt{2\varphi_0} . 
\end{align}

Note that, although the local parametrix will be initially defined on the subdomain $\mathbb{T}(\lambda_0)$, the symmetric extension to the torus $\mathbb{T}$ with the identification \eqref{domsubdom:T} is crucial to obtain our final result. To this end, we note that the properties of $p(\lambda,\varkappa)$ in \eqref{symm:p} imply that 
$$
\xi(\lambda+2K)=\xi(\lambda),
$$
and we need to introduce 
\begin{align}\label{def:xitil}
   \widetilde{\xi}(\lambda) :=\xi(\lambda+2\ii K').
\end{align}
We will use $\xi(\lambda)$ in the neighborhood of $\lambda_0$, $\lambda_0+2K$ and $\widetilde{\xi}(\lambda)$ in the neighborhood of $\lambda_0+2\ii K'$, $\lambda_0+2K+2\ii K'$.

We would like to take the jump of the local parametrix near $\lambda_0$ as shown on Figure \ref{fig:contour_local}. Here $\sfr_0=\sfr(\lambda_0)$.
\begin{figure}[h!]
        \centering
\includegraphics[trim={0cm 0.5cm 0cm, 0.5cm},clip,width=1.0\columnwidth]{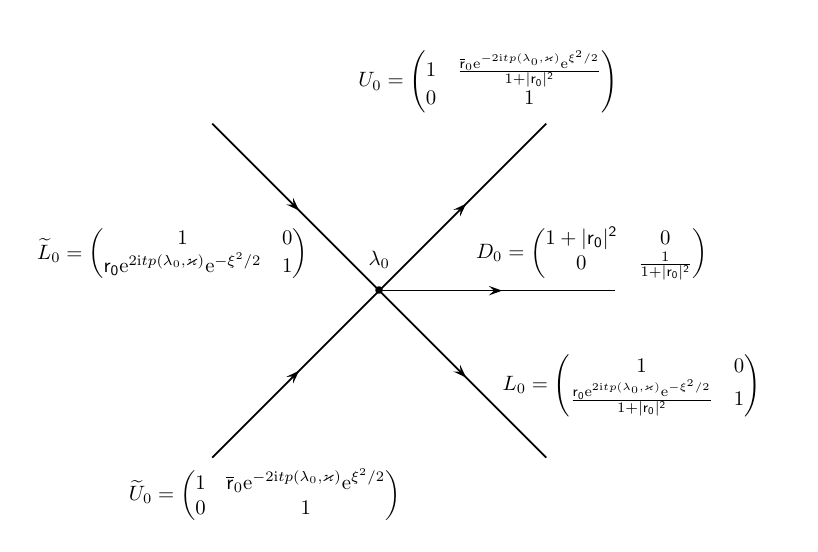}
    \caption{Contour $\Sigma^{(loc)}(\lambda_0)$ and the respective jumps in the disc $\mathbb{D}(\lambda_0)$.}
\label{fig:contour_local}
    \end{figure} 
    In order to provide matching with the jump of $T(\lambda,x,t)$ we need to take discs of local parametrix of shrinking radius. Denote the disc near point $\lambda_0$ described by the condition $|\lambda-\lambda_0|\leq t^{-\frac{1}{2}+\varepsilon}$ with $0<\varepsilon<\frac{1}{2}$ as $\mathbb{D}(\lambda_0)$. We will introduce local parametrix on the collection of its shifts
\begin{align}\label{disc_local}
    \mathbb{D}= \mathbb{D}(\lambda_0)+ \mathbb{D}(\lambda_0+2K)+  \mathbb{D}(\lambda_0+2\ii K')+  \mathbb{D}(\lambda_0+2K+2\ii K').
\end{align}
We also introduce the notation $\Sigma^{(loc)}(\lambda_0)$ for the jump  contour  for the local parametrix in the disc $\mathbb{D}(\lambda_0)$.   
Now using the shift we can introduce the jump contour for the local parametrix
\begin{align}\label{domsubdom:Sigma_local}
    \Sigma^{(loc)}= \Sigma^{(loc)}(\lambda_0)+\Sigma^{(loc)}(\lambda_0+2K)+ \left(\Sigma^{(loc)}(\lambda_0+2\ii K')\right)^{-1}+ \left(\Sigma^{(loc)}(\lambda_0+2K+2\ii K')\right)^{-1}.
\end{align}
As the result on the $\lambda$-plane, the {\it local} parametrix $T^{(loc)}(\lambda,x,t)$, solves the following RHP.
\begin{rhp}\label{rhp:localpar}
\begin{enumerate}
    \item The function $T^{(loc)}(\lambda,x,t)$ is analytic on $\mathbb{D}\backslash \Sigma^{(loc)}$, with the contour $\Sigma^{(loc)}$ given by \eqref{domsubdom:Sigma_local}, the collection of discs $\mathbb{D}$ is given by \eqref{disc_local} and contour $\Sigma^{(loc)}(\lambda_0)$ is shown on Figure \ref{fig:contour_local}.

    \item For $\lambda \in \Sigma^{(loc)}$, 
    \begin{align}
        T^{(loc)}_{+}(\lambda,x,t) = T^{(loc)}_{-}(\lambda,x,t) G_T^{(loc)}(\lambda,x,t),
    \end{align}
    with the jump $G_T^{(loc)}(\lambda,x,t)$ piecewise defined on each ray of the contour $\Sigma^{(loc)}$ as shown on Figure \ref{fig:contour_local}.
  
   \item The symmetry conditions hold \begin{align}\label{rhp:local_symmetry}
 \sigma_{3} T^{(loc)}(\lambda + 2K,x,t) \sigma_{3} = T^{(loc)}(\lambda,x,t), &&
 \sigma_{1} T^{(loc)}(\lambda + 2\ii K',x,t) \sigma_{1} = T^{(loc)}(\lambda,x,t).
\end{align}
\item The normalization $\det(T^{(loc)}(\lambda,x,t))=1$ holds.
    \item In asymptotic regime $\lambda\in \partial\mathbb{D}$, $t\to\infty$ we have
    \begin{align}
        T^{(loc)}(\lambda,x,t)= T^{(gl)}(\lambda) \left(\mathbb{1}+\mathcal{O}(t^{-\frac{1}{2}}) \right) = \alpha(\lambda)^{\sigma_3}\left(\mathbb{1}+\mathcal{O}(t^{-\frac{1}{2}}) \right).
    \end{align}
\end{enumerate}    
\end{rhp}

Solution of the RHP \ref{rhp:localpar} can be expressed in terms of parabolic cylinder functions parametrix presented in appendix \ref{sec:parabolic}. More precisely we compare RHP \ref{rhp:localpar} with RHP \ref{rhp:parabolic}. Comparing the jumps we see that the parameters $a, b$ should be chosen as
\begin{align}
-\frac{\ii\sqrt{2\pi}\ee^{-2\pi \nu}}{a \Gamma(-\ii\nu)}:=-{\sf r}_0\ee^{2\ii tp(\lambda_0,\varkappa)},&& -\frac{\sqrt{2\pi}\ee^{3\pi  \nu}}{b \Gamma(\ii\nu)}:=-\overline{{\sf r}}_0\ee^{-2\ii tp(\lambda_0,\varkappa)}, && ab=\ii\nu. \label{def:abnu}
\end{align}

The local parametrix $T^{(loc)}(\lambda,x,t)$ near $\lambda_0$ can then be written in terms of parabolic cylinders through the matrix $\mathbf{D}_{a,b}(\xi)$ defined in \eqref{eq:parabolic_cylinder_parametrix} as
\begin{equation}\label{Pl}
T^{(loc)}(\lambda)=\alpha(\lambda)^{\sigma_3}\left( \xi (\lambda)  \right)^{-\ii\nu \sigma_3}  \mathbf{D}_{a,b}(\xi) \ee^{-\frac{\xi^2}{4}\sigma_3},\quad \lambda\in\mathbb{D}(\lambda_0),\quad \frac{\pi}{4}\leq\arg(\xi(\lambda))\leq \frac{9\pi}{4}
\end{equation}
Using the behaviour of $p(\lambda,\varkappa)$ in \eqref{symm:p} along with the transformation $ \widetilde{\xi}(\lambda)$ defined in \eqref{def:xitil}, the local parametrix is determined to be

\begin{align}\label{def:PDab}
    T^{(loc)}(\lambda,x,t)&:= \begin{cases}
        \alpha(\lambda)^{\sigma_3}\left( \xi (\lambda)  \right)^{-\ii\nu \sigma_3}  \mathbf{D}_{a,b}(\xi) \ee^{-\frac{\xi^2}{4}\sigma_3},\quad \lambda\in\mathbb{D}(\lambda_0),\\
        \sigma_3 \alpha(\lambda)^{\sigma_3}\left( \xi (\lambda)  \right)^{-\ii\nu \sigma_3}  \mathbf{D}_{a,b}(\xi) \ee^{-\frac{\xi^2}{4}\sigma_3}\sigma_3,\quad \lambda\in\mathbb{D}(\lambda_0+2K),\\
         \sigma_1 \alpha(\lambda+2\ii K')^{\sigma_3}\left( \widetilde{\xi}(\lambda)  \right)^{-\ii\nu \sigma_3}  \mathbf{D}_{a,b}(\widetilde{\xi}) \ee^{-\frac{\widetilde{\xi}^2}{4}\sigma_3}\sigma_1, \quad \lambda\in\mathbb{D}(\lambda_0+2\ii K'),\\
      \sigma_2   \alpha(\lambda+2K+2\ii K')^{\sigma_3}\left( \widetilde{\xi}(\lambda)  \right)^{-\ii\nu \sigma_3}  \mathbf{D}_{a,b}(\widetilde{\xi}) \ee^{-\frac{\widetilde{\xi}^2}{4}\sigma_3}\sigma_2,\\\lambda\in\mathbb{D}(\lambda_0+2K+2\ii K').
    \end{cases}
\end{align}
The final step in the nonlinear steepest descent analysis is the use of the small norm theorem. 

\subsection{Small norm problem}\label{subsec:small_norm}
 We denote the circle of the boundary of disc $\mathbb{D}(\lambda_0)$ by $\mathcal{C}(\lambda_0)$, and the union of such circles in each of the subdomains of the torus \eqref{domsubdom:T} is denoted by
\begin{align}
    \mathcal{C} = \mathcal{C}(\lambda_0)+\mathcal{C}(\lambda_0+2K)+ \mathcal{C}(\lambda_0+2\ii K') + \mathcal{C}(\lambda_0+2K+2\ii K').
\end{align}
  
        \begin{figure}[H]
        \centering
        \includegraphics[trim={0.3cm 0.3cm 0.2cm 0cm},clip, width=0.8\columnwidth]{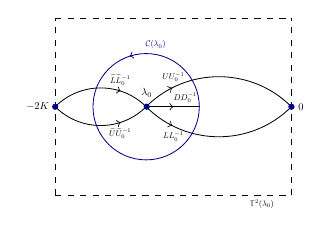}
        \caption{Jump contour $\Sigma^{(R)}(\lambda_0)$. It extends to the domain $\mathbb{T}(\lambda_0)$.}
        \label{fig:contour_boundary}
    \end{figure}
We define the ratio of the solution of the RHP \ref{rhp:3} and the local, global parametrices in \eqref{Pl}, \eqref{Tgl} respectively as
 \begin{align}\label{transf:2}
    R(\lambda,x,t):= \begin{cases}
        T(\lambda,x,t) \left(T^{(gl)}(\lambda)\right)^{-1} , \qquad \lambda\in\mathbb{T}\setminus\mathbb{D},\\
        T(\lambda,x,t) \left(T^{(loc)} (\lambda,x,t)\right)^{-1},\qquad \lambda\in\mathbb{D}.
    \end{cases}
\end{align}
It has the jump in the contour  $\Sigma^{(R)}(\lambda_0)$ shown on Figure \ref{fig:contour_boundary} in $\mathbb{T}(\lambda_0)$. We also extend it to $\mathbb{T}$.

\begin{align}
    \Sigma^{(R)} =\Sigma^{(R)}(\lambda_0)+\Sigma^{(R)}(\lambda_0+2K)+ \Sigma^{(R)}(\lambda_0+2\ii K')^{-1} + \Sigma^{(R)}(\lambda_0+2K+2\ii K')^{-1}.
\end{align}
\begin{rhp}\label{rhp:R}
    \begin{enumerate}
        \item The function $R(\lambda,x,t)$ is piecewise analytic on $\mathbb{T}^2\backslash\Sigma^{(R)}$.

\item On the circles $\mathcal{C}$, it has the jump
\begin{equation}\label{def:G_R}
    R_{+}(\lambda,x,t) = R_{-}(\lambda,x,t) G_{R}(\lambda,x,t), \qquad G_{R}(\lambda,x,t) = T^{(gl)}(\lambda) \left( T^{(loc)}(\lambda,x,t)\right)^{-1}.
\end{equation}
 Inside the circles $\mathcal{C}$, it has a piecewise jump on each of the rays. It is given by the expression indicated on the Figure \ref{fig:contour_boundary} and conjugated by $T^{(loc)}_-(\lambda,x,t)$.
 
\noindent Outside the circles $\mathcal{C}$, it has a piecewise jump 
 $$G_R(\lambda,x,t)=T^{(gl)}(\lambda)G_T(\lambda,x,t)\left(T^{(gl)}(\lambda)\right)^{-1}.$$

   \item The symmetry conditions hold \begin{align}\label{rhp:small_symmetry}
 \sigma_{3} R(\lambda + 2K,x,t) \sigma_{3} = R(\lambda,x,t), &&
 \sigma_{1} R(\lambda + 2\ii K',x,t) \sigma_{1} = T^{(loc)}(\lambda,x,t).
\end{align}
\item The normalization $\det(R(\lambda,x,t))=1$ holds.
    \end{enumerate}
\end{rhp}

   We can observe that $G_R(\lambda,x,t)$ satisfies
    \begin{equation}\label{symm:GR}
        G_{R}(\lambda+2K,x,t) = \sigma_3 G_R(\lambda,x,t) \sigma_3,\qquad G_{R}(\lambda+2\ii K',x,t) = \sigma_1 G_R(\lambda,x,t) \sigma_1.
    \end{equation}

An important step for our final analysis is the $t \to\infty$ behavior of the jump $G_R(\lambda,x,t)$.  which can be explicitly determined by the asymptotics of the matrix of parabolic cylinder functions \eqref{asymp:Dab} on the circles $\mathcal{C}$, and we have the following statement.
\begin{proposition}
 In the limit $t\to \infty$, the jump $G_R(\lambda,x,t)$ for $\lambda\in \mathcal{C}(\lambda_0)$ behaves as
    \begin{align}
    G_R(\lambda,x,t)
    =\mathbb{1} + \frac{1}{\xi(\lambda)}\left(\begin{array}{cc}
       0  & -a  \widetilde{\gamma}\ee^{2\ii c_0(\lambda)} \\
       b \widetilde{\gamma}^{-1}\ee^{-2\ii c_0(\lambda)}  & 0
    \end{array} \right)+ \mathcal{O}(\xi(\lambda)^{-2}), \\  \widetilde{\gamma} := \left(\frac{\gamma^2 t}{\beta_0^2}\right)^{-\ii\nu}\ee^{2\pi\nu}, \label{asymp:GR}
\end{align}
with
 $c_0(\lambda), \beta_0, \xi(\lambda), \gamma, a, b$ defined in \eqref{def:beta0}, \eqref{def:c_0}, \eqref{def:xi}, \eqref{def:gamma},  \eqref{def:abnu} respectively.
 Moreover let function $F(\lambda)$ be analytic in the neighborhood of $\lambda_0$. In the asymptotics $t\to \infty$, the integral of $(G_R(\lambda,x,t)-\mathbb{1})F(\lambda)$ over $\mathcal{C}(\lambda_0)$ behaves as
    \begin{align}
    \ointctrclockwise_{\mathcal{C}(\lambda_0)} (G_R(\lambda,x,t)-\mathbb{1})&F(\lambda)\frac{d\lambda}{2\pi \ii}   
    =\mathbb{1} + \frac{1}{\gamma t^{\frac{1}{2}}}
    \left(\begin{array}{cc}
       0  & -a  \widetilde{\gamma}\ee^{2\ii \sfc_0} \\
       b \widetilde{\gamma}^{-1}\ee^{-2\ii \sfc_0}  & 0
    \end{array} \right)
    F(\lambda_0)+ \mathcal{O}(t^{-1}),\\&\qquad \sfc_0=c_0(\lambda_0).\label{asymp:GRint}
\end{align}
\end{proposition}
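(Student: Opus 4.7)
The plan is to reduce both claims to explicit substitution and residue calculus, using the large-$\xi$ asymptotic of the parabolic cylinder model $\mathbf{D}_{a,b}(\xi)$ from Appendix \ref{sec:parabolic}. The key observation is that on the circle $\mathcal{C}(\lambda_0)$ of radius $t^{-1/2+\varepsilon}$, one has $|\xi(\lambda)|\sim t^{\varepsilon}\to\infty$, so the model asymptotic applies.

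First I would combine the formulas \eqref{Tgl} and \eqref{def:PDab} to write
\begin{equation*}
G_R(\lambda,x,t) \;=\; \alpha(\lambda)^{\sigma_3}\,\bigl[\,e^{(\xi^2/4)\sigma_3}\mathbf{D}_{a,b}(\xi)^{-1}\xi^{\ii\nu\sigma_3}\,\bigr]\,\alpha(\lambda)^{-\sigma_3},
\end{equation*}
and then invoke the large-$\xi$ behaviour of $\mathbf{D}_{a,b}$. Reading off the standard parabolic-cylinder result, the bracketed matrix equals $\mathbb{1}-\xi^{-i\nu\sigma_3}\mathbf{D}_1\xi^{i\nu\sigma_3}/\xi+O(\xi^{-2})$ for a constant off-diagonal $\mathbf{D}_1=\bigl(\begin{smallmatrix}0&a\\-b&0\end{smallmatrix}\bigr)$. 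The diagonal factors $e^{\pm\xi^2/4\sigma_3}$ cancel, and the conjugation by $\alpha^{\sigma_3}$ sends the off-diagonal entries to $\alpha^{\pm 2}\xi^{\mp 2i\nu}$ times $a$ or $-b$.

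Next I would identify $\alpha^{\pm 2}\xi^{\mp 2i\nu}$ with $\widetilde{\gamma}^{\pm 1}e^{\pm 2\ii c_0(\lambda)}$. Using the factorisation \eqref{def:c0lambda}, $\alpha(\lambda)^2=\beta(\lambda_0-\lambda)^{2\ii\nu}e^{2\ii c_0(\lambda)}$, together with the local expansions $\beta(\mu)\sim\beta_0\mu$ from \eqref{def:beta0} and $\xi(\lambda)\sim\gamma t^{1/2}(\lambda-\lambda_0)$ from \eqref{def:xi}, one finds $\beta(\lambda_0-\lambda)\sim-\beta_0\xi/(\gamma t^{1/2})$, hence
\begin{equation*}
\xi^{-2\ii\nu}\alpha^2 \;\sim\; (-1)^{2\ii\nu}\bigl(\gamma^2 t/\beta_0^2\bigr)^{-\ii\nu}e^{2\ii c_0(\lambda)}.
\end{equation*}
Picking the branch of $(-1)^{2\ii\nu}=e^{2\pi\nu}$ consistent with the domain $\pi/4\le\arg\xi\le 9\pi/4$ (cf.\ \eqref{Pl}) recovers precisely $\widetilde{\gamma}=(\gamma^2t/\beta_0^2)^{-\ii\nu}e^{2\pi\nu}$, and collecting signs yields \eqref{asymp:GR}.

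For the integral \eqref{asymp:GRint} I would apply the residue theorem. The leading term of $(G_R-\mathbb{1})F$ equals $\widetilde{\Delta}(\lambda)F(\lambda)/\xi(\lambda)$ where $\widetilde{\Delta}(\lambda)$ is analytic on a neighbourhood of $\mathbb{D}(\lambda_0)$; since $\xi$ has a simple zero at $\lambda_0$ with $\xi'(\lambda_0)=\gamma t^{1/2}$, the residue at $\lambda_0$ is $\widetilde{\Delta}(\lambda_0)F(\lambda_0)/(\gamma t^{1/2})$, giving the stated main term with $\sfc_0=c_0(\lambda_0)$. The remainder $O(\xi^{-2})$ contributes a double pole at $\lambda_0$ whose residue is $O(t^{-1})$, producing the announced error term. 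The main subtlety throughout is consistent tracking of the fractional power branches: the branch of $(-1)^{2\ii\nu}$ must be synchronised with the cut used to define $\xi^{-\ii\nu\sigma_3}$ inside the local parametrix, and a sign mistake here would shift the constant $\widetilde\gamma$ by a factor of $e^{\pm 4\pi\nu}$ and spoil the final asymptotic for $L(x,t)$.
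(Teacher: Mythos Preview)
Your approach is essentially identical to the paper's: both insert the large-$\xi$ expansion \eqref{asymp:Dab} of $\mathbf{D}_{a,b}$ into the definition \eqref{Pl} of $T^{(loc)}$, combine with the factorisation \eqref{def:c0lambda} of $T^{(gl)}=\alpha^{\sigma_3}$, and track the branch of $(\lambda_0-\lambda)^{\ii\nu}$ versus $\xi^{\ii\nu}$ to produce the factor $\ee^{2\pi\nu}$ in $\widetilde\gamma$; the integral is then handled by residues exactly as you describe. The only cosmetic difference is that the paper expands $T^{(gl)}$ and $T^{(loc)}$ separately before forming the product, whereas you write $G_R$ as a single conjugation and expand once---and you should double-check the sign of your $\mathbf{D}_1$ against $m_1$ in \eqref{asymp:Dab}, since $-m_1=\bigl(\begin{smallmatrix}0&a\\-b&0\end{smallmatrix}\bigr)$ enters with a plus sign after inverting.
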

\begin{proof}
The asymptotic behaviour of $T^{(loc)}(\lambda,x,t)$ for $t\to \infty$, or equivalently $\xi \to \infty$, is obtained from \eqref{asymp:Dab}, \eqref{Pl}:
\begin{align}
    T^{(loc)}(\lambda,x,t)\mathop{=} (\alpha(\lambda))^{\sigma_3} \xi^{-\ii\nu\sigma_3}\left( \mathbb{1} + \frac{1}{\xi}\begin{pmatrix}
        0&-a\\b&0
    \end{pmatrix}+ \mathcal{O}(\xi^{-2})\right)\xi^{\ii\nu\sigma_3}.
\end{align}
The values of $a, b$ are given in \eqref{def:abnu}.
Remembering the definition \eqref{Tgl} of global parametrix $T^{{(gl)}} (\lambda)$ we can see that for \\$|\lambda-\lambda_0|=\mathcal{O}(t^{-\frac{1}{2}+\varepsilon})$
\begin{align}
    T^{(gl)}(\lambda)&\mathop{=} \left( \lambda_0 - \lambda  \right)^{\ii\nu\sigma_3} \beta_0^{\ii\nu\sigma_3} \ee^{\ii c_0(\lambda)\sigma_3} \left(\mathbb{1} +\mathcal{O}(t^{-\frac{1}{2}+\varepsilon})\right),\quad -\pi\leq\arg(\lambda_0-\lambda)\leq\pi\\
    &=\left( \lambda - \lambda_0  \right)^{\ii\nu\sigma_3} \ee^{\pi\nu\sigma_3}\beta_0^{\ii\nu\sigma_3} \ee^{\ii c_0(\lambda)\sigma_3}\left(\mathbb{1} +\mathcal{O}(t^{-\frac{1}{2}+\varepsilon})\right),\quad 0\leq\arg(\lambda-\lambda_0)\leq 2\pi\\
    &=\xi^{\ii\nu\sigma_3}\left(\frac{\gamma t^{1/2}}{\beta_0}\right)^{-\ii\nu\sigma_3}\ee^{\pi\nu\sigma_3}\ee^{\ii c_0(\lambda)\sigma_3}\left(\mathbb{1} +\mathcal{O}(t^{-\frac{1}{2}+\varepsilon})\right).
\end{align}
We deduce that the jump \eqref{def:G_R} then has the following behaviour in the same limit
\begin{align}
    G_R(\lambda,x,t)&= T^{{(gl)}} (\lambda) \xi^{-\ii\nu\sigma_3}\left(\mathbb{1} - \frac{m_1}{\xi} + \mathcal{O}(\xi^{-2}) \right) \xi^{\ii\nu\sigma_3}\left(T^{{(gl)}} (\lambda)\right)^{-1}\nonumber\\
    &=\left(\begin{array}{cc}
       1  & -\frac{a \ee^{2\ii c_0(\lambda)+2\pi\nu} }{\xi} \left(\frac{\gamma^{2} t}{\beta_0^2}\right)^{-\ii\nu} \\
      \frac{ b \ee^{-2\ii c_0(\lambda)-2\pi\nu}}{\xi} \left(\frac{\gamma^{2} t}{\beta_0^2}\right)^{\ii\nu}  & 1
    \end{array} \right)+ \mathcal{O}(\xi^{-2}),
\end{align}
which, with the definition of $\widetilde{\gamma}$ gives \eqref{asymp:GR}. Evaluating the integral using residues we get \eqref{asymp:GRint}.
\end{proof}
Note that the off-diagonal elements in \eqref{asymp:GR} depend on $t$ as $t^{-\varepsilon \pm \ii\nu/2}$, but $\nu$ is real, so we can say
\begin{align}
G_R(\lambda,x,t)-\mathbb{1} = \mathcal{O}(t^{-\varepsilon}),\quad \lambda\in \mathcal{C}. 
\end{align}
Moreover for $\lambda\in\mathbb{D}\setminus (\Gamma_1\cup\Gamma_2)$ we need to have jump terms $\sfr(\lambda)\ee^{2\ii tp(\lambda,\varkappa)}-\sfr_0 \ee^{2\ii tp(\lambda_0,\varkappa)}\ee^{-\frac{\xi^2}{2}}$ and $\overline{\sfr(\lambda)}\ee^{-2\ii tp(\lambda,\varkappa)}- \overline{\sfr}_0 \ee^{-2\ii tp(\lambda_0,\varkappa)}\ee^{\frac{\xi^2}{2}}$ to be small. With the expression for $\xi(\lambda)$ in \eqref{def:xi}, we see that we want to minimize terms of type $|\lambda-\lambda_0|\ee^{-|\gamma|^2t|\lambda-\lambda_0|^2}$. The maximal value for this expression is achieved for $|\lambda-\lambda_0|\simeq \frac{1}{|\gamma|\sqrt{2t}}$. We use this value to estimate the jump and arrive at 
\begin{align}
G_R(\lambda,x,t)-\mathbb{1} = \mathcal{O}(t^{-\frac{1}{2}}),\quad \lambda\in (\Sigma^{(R)}\cap\mathbb{D})\setminus (\Gamma_1\cup\Gamma_2). 
\end{align}
On the $\mathbb{D}\cap(\Gamma_1\cup\Gamma_2)$ the estimate for the jump is given by $\mathcal{O}(\lambda-\lambda_0)$, so we get
\begin{align}
G_R(\lambda,x,t)-\mathbb{1} = \mathcal{O}(t^{-\frac{1}{2}+\varepsilon}),\quad \lambda\in \Sigma^{(R)}\cap\mathbb{D}\cap (\Gamma_1\cup\Gamma_2). 
\end{align}
Outside of discs $\mathbb{D}$ the sign of imaginary part of $p(\lambda,\varkappa)$ guarantees the exponential decay
\begin{align}
G_R(\lambda,x,t)-\mathbb{1} = \mathcal{O}(\ee^{-|\gamma|^2t^{2\varepsilon}}),\quad \lambda\in \Sigma^{(R)}\setminus\mathbb{D}. 
\end{align}
Keeping in mind the vanishing properties of reflection coefficient we get the estimate
\begin{align} \label{eq:GR_uniform_estimate}
G_R(\lambda,x,t)-\mathbb{1} = \mathcal{O}\left(\frac{t^{-\varepsilon_1}}{1+|w_3(\lambda)|}\right),\quad\varepsilon_1=\min\left(\varepsilon,\frac{1}{2}-\varepsilon\right),\quad \lambda\in \Sigma^{(R)}. 
\end{align}
Using the fact that domains $\mathbb{D}$ are shrinking as $t\to \infty$ we can get other useful estimates
\begin{align} \label{eq:GR_L2_estimate}
\|(G_R(\lambda,x,t)-\mathbb{1})C(\lambda,0)\|_{L_2(\Sigma^{(R)})} = \mathcal{O}\left(t^{-\varepsilon_2}\right),\quad\varepsilon_2=\min\left(\frac{1}{4}+\frac{\varepsilon}{2},\frac{3}{4}-\frac{3\varepsilon}{2}\right). 
\end{align}
\begin{align} \label{eq:GR_L1_estimate}
\|(G_R(\lambda,x,t)-\mathbb{1})C(\lambda,0)\|_{L_1(\Sigma^{(R)}\setminus \mathcal{C})} = \mathcal{O}\left(t^{-1+2\varepsilon}\right). 
\end{align}
\section{Proof of Theorem \ref{thm:asymp_LL}}\label{Sec:asymp}
Let us start by recollecting the relation between the solution of the Landau-Lifshitz equation and the solution of the RHP \ref{rhp2} given by \eqref{sol_LL_RHP_init_1}:
\begin{align}\label{sol_LL_RHP_init_2}
    \Psi_1(x,t) \sigma_3 \left(\Psi_1(x,t)\right)^{-1} = \sum_{j=1}^3 L_{j}(x,t) \sigma_{j} =:\mathbf{L}(x,t), 
\end{align}
where $\Psi_1(x,t)$ is the leading behaviour of the solution of the RHP \ref{rhp2} at $\lambda\to 0$, see \eqref{def:asympY}.
 Tracing the transformations so far we have
\begin{align}
Y(\lambda,x,t)&\mathop{=}^{\eqref{transf:YT}}T(\lambda,x,t) M(\lambda,x,t)
    \mathop{=}^{\eqref{transf:2}} R(\lambda,x,t) T^{(gl)}(\lambda) M(\lambda,x,t) ,\label{eq:Psiexp}
\end{align}
and in the vicinity of $\lambda\to 0$ due to \eqref{asymp:M}, \eqref{def:Tgl_alpha} and \eqref{eq:phi1} 
\begin{align}\label{eq:Psi1R0}
  \Psi_1(x,t)= R(0,x,t) T^{(gl)}(0)=R(0,x,t)\alpha(0)^{\sigma_3}.
\end{align}
To determine the above quantity, we compute $R(0,x,t)$ using the singular integral equations following section \ref{sec:singular_integral_equation}
\begin{align}\label{def:chi-}
    \chi^{(R)}(\lambda,x,t) = \mathbb{1} + \frac{1}{2\pi \ii}\int_{\Sigma^{(R)}} \chi^{(R)}(\mu,x,t) \left( G_{R}(\mu,x,t)- \mathbb{1}\right) C(\mu, \lambda-i0) d\mu,
\end{align}
and
\begin{align}
    \Phi^{(R)}(\lambda,x,t) = \mathbb{1} + \frac{1}{2\pi \ii}\int_{\Sigma^{(R)}}  \chi^{(R)}(\mu,x,t) \left( G_{R}(\mu,x,t)- \mathbb{1}\right) C(\mu, \lambda) d\mu.
\end{align}
According to \cite{rodin1989} normalized version of RHP \ref{rhp2} is solvable. Therefore the solution of normalized version of Riemann-Hilbert problem for $R(\lambda,x,t)$ is also solvable and solution is given by $ \Phi^{(R)}(\lambda,x,t)$. 
The estimate \eqref{eq:GR_uniform_estimate} implies that we can iterate equation \eqref{def:chi-}. Using \eqref{eq:GR_L2_estimate} we get
 \begin{equation}\label{eq:chiR_estimate}\|\chi^{(R)}(\lambda,x,t)-\mathbb{1}\|_{L_2(\Sigma^{(R)})}=\mathcal{O}(t^{-\varepsilon_2}).\end{equation}

The symmetrization of $\Phi^{(R)}(\lambda,x,t)$ determines $R(\lambda,x,t)$ 
\begin{align}
    R(\lambda) = \frac{1}{\sqrt{c}} \left(\Phi^{(R)}(\lambda) + \sigma_1 \Phi^{(R)}(\lambda+ 2\ii K') \sigma_1 + \sigma_2 \Phi^{(R)}(\lambda+ 2K+2\ii K') \sigma_2 \right.\\\left.+  \sigma_3 \Phi^{(R)}(\lambda+ 2K) \sigma_3\right),\\
    c=\det\left(\Phi^{(R)}(\lambda) + \sigma_1 \Phi^{(R)}(\lambda+ 2\ii K') \sigma_1 +\sigma_2 \Phi^{(R)}(\lambda+ 2K+2\ii K') \sigma_2 \right.\\\left. + \sigma_3 \Phi^{(R)}(\lambda+ 2K) \sigma_3\right).
\end{align}
Our final goal is to obtain the $t\to +\infty$ asymptotics of the solution of the LL equation through the identities \eqref{sol_LL_RHP_init_2}, \eqref{eq:Psi1R0}. 
To this end, the term $R(0)$ is given by
\begin{align}
    R(0)\sqrt{c} &=  \left(\Phi^{(R)}(0) + \sigma_1 \Phi^{(R)}( 2\ii K') \sigma_1 +\sigma_3 \Phi^{(R)}(2K) \sigma_3+ \sigma_2 \Phi^{(R)}( 2K+2\ii K') \sigma_2  \right)\nonumber \\
    & =  4 \mathbb{1} + \frac{1}{2\pi \ii}\int_{\Sigma^{(R)}}  \chi^{(R)}(\mu,x,t) \left( G_{R}(\mu,x,t)- \mathbb{1}\right) C(\mu,0) d\mu\nonumber\\
    &+  \sigma_3\frac{1}{2\pi \ii}\int_{\Sigma^{(R)}}  \chi^{(R)}(\mu,x,t) \left( G_{R}(\mu,x,t)- \mathbb{1}\right) C(\mu, 2K)\sigma_3 d\mu \nonumber \\
    &+  \sigma_1\frac{1}{2\pi \ii}\int_{\Sigma^{(R)}}  \chi^{(R)}(\mu,x,t) \left( G_{R}(\mu,x,t)- \mathbb{1}\right) C(\mu, 2\ii K')\sigma_1 d\mu  \nonumber\\
    &+\sigma_2\frac{1}{2\pi \ii}\int_{\Sigma^{(R)}}  \chi^{(R)}(\mu,x,t) \left( G_{R}(\mu,x,t)- \mathbb{1}\right) C(\mu, 2K+2\ii K')\sigma_2 d\mu.\label{eq:r01}
\end{align}
 So, for $t\to \infty$, using \eqref{eq:GR_L2_estimate}, \eqref{eq:chiR_estimate} we get
\begin{align}
   R(0,x,t)\sqrt{c}&=  4 \mathbb{1} + \frac{1}{2\pi \ii}\int_{\Sigma^{(R)}}   \left( G_{R}(\mu)- \mathbb{1}\right) C(\mu,0) d\mu\\&+  \sigma_3\frac{1}{2\pi \ii}\int_{\Sigma^{(R)}}  \left( G_{R}(\mu)- \mathbb{1}\right) C(\mu, 2K)\sigma_3 d\mu \nonumber \\
    &+  \sigma_1\frac{1}{2\pi \ii}\int_{\Sigma^{(R)}}   \left( G_{R}(\mu)- \mathbb{1}\right) C(\mu, 2\ii K')\sigma_1 d\mu \nonumber\\
    &+\sigma_2\frac{1}{2\pi \ii}\int_{\Sigma^{(R)}}  \left( G_{R}(\mu)- \mathbb{1}\right) C(\mu, 2K+2\ii K')\sigma_2 d\mu + \mathcal{O}(t^{-2\varepsilon_2}).
\end{align}
On the next step we replace contour $\Sigma^{(R)}$ with the collection of circles $\mathcal{C}$. We use estimate \eqref{eq:GR_L1_estimate} and get
\begin{align}
   R(0,x,t)\sqrt{c} &=  4 \mathbb{1} + \frac{1}{2\pi \ii}\int_{\mathcal{C}}  \left( G_{R}(\mu)- \mathbb{1}\right) C(\mu,0) d\mu+  \sigma_3\frac{1}{2\pi \ii}\int_{\mathcal{C}} \left( G_{R}(\mu)- \mathbb{1}\right) C(\mu, 2K)\sigma_3 d\mu \nonumber \\
    &+  \sigma_1\frac{1}{2\pi \ii}\int_{\mathcal{C}}  \left( G_{R}(\mu)- \mathbb{1}\right) C(\mu, 2\ii K')\sigma_1 d\mu \nonumber \\
    &+\sigma_2\frac{1}{2\pi \ii}\int_{\mathcal{C}} \left( G_{R}(\mu)- \mathbb{1}\right) C(\mu, 2K+2\ii K')\sigma_2 d\mu + \mathcal{O}(t^{-\varepsilon_1-\varepsilon_2})+\mathcal{O}(t^{-1+2\varepsilon}) \nonumber\\
    &=:  4 \mathbb{1}+I_1+I_2+I_3+I_4+ \mathcal{O}(t^{-\frac{2}{3}}).
\end{align}
On the last step we took $\varepsilon=\frac{1}{6}$, which corresponds to $\mathop{\max}\limits_{0<\varepsilon<\frac{1}{2}}\min(1-2\varepsilon,2\varepsilon_2)=\frac{2}{3}$ and optimizes the error term. 
Let us expand the individual integrals above:
\begin{align}
    &2\pi \ii I_1:= \int_{\mathcal{C}}  \left( G_{R}(\mu)- \mathbb{1}\right) C(\mu,0) d\mu\nonumber\\
    &=  \ointctrclockwise_{\lambda_0}  \left( G_{R}(\mu)- \mathbb{1}\right) C(\mu,0) d\mu+\ointctrclockwise_{\lambda_0+2K}  \left( G_{R}(\mu)- \mathbb{1}\right) C(\mu,0) d\mu \nonumber\\
    &+\ointctrclockwise_{\lambda_0+2\ii K'}  \left( G_{R}(\mu)- \mathbb{1}\right) C(\mu,0) d\mu +\ointctrclockwise_{\lambda_0+2K+2\ii K'}  \left( G_{R}(\mu)- \mathbb{1}\right) C(\mu,0) d\mu \nonumber   \\
    &= \ointctrclockwise_{\lambda_0}  \left( G_{R}(\mu)- \mathbb{1}\right) C(\mu,0) d\mu+\ointctrclockwise_{\lambda_0}  \left( G_{R}(\mu+2K)- \mathbb{1}\right) C(\mu+2K,0) d\mu \nonumber\\
    &+\ointctrclockwise_{\lambda_0}  \left( G_{R}(\mu+2\ii K')- \mathbb{1}\right) C(\mu+2\ii K',0) d\mu \nonumber \\
    &+\ointctrclockwise_{\lambda_0}  \left( G_{R}(\mu+2K+2\ii K')- \mathbb{1}\right) C(\mu+2K+2\ii K',0) d\mu\nonumber\\
    &\mathop{=}^{\eqref{symm:GR}} \ointctrclockwise_{\lambda_0}  \left( G_{R}(\mu)- \mathbb{1}\right) C(\mu,0) d\mu+\ointctrclockwise_{\lambda_0} \sigma_3 \left( G_{R}(\mu)- \mathbb{1}\right)\sigma_3 C(\mu+2K,0) d\mu \nonumber\\
    &+\ointctrclockwise_{\lambda_0} \sigma_1 \left( G_{R}(\mu)- \mathbb{1}\right)\sigma_1 C(\mu+2\ii K',0) d\mu \nonumber\\
    &+\ointctrclockwise_{\lambda_0}\sigma_2  \left( G_{R}(\mu)- \mathbb{1}\right) \sigma_2 C(\mu+2K+2\ii K',0) d\mu. \label{I1}
\end{align}
Remaining integrals are computed in a similar fashion:
\begin{align}
    2\pi \ii I_2&:= \int_{\mathcal{C}} \sigma_3 \left( G_{R}(\mu)- \mathbb{1}\right) \sigma_3 C(\mu,2K) d\mu\nonumber\\
    &\mathop{=}^{\eqref{symm:GR}} \ointctrclockwise_{\lambda_0} \sigma_3 \left( G_{R}(\mu)- \mathbb{1}\right) \sigma_3 C(\mu,2K) d\mu+\ointctrclockwise_{\lambda_0}  \left( G_{R}(\mu)- \mathbb{1}\right) C(\mu+2K,2K) d\mu \nonumber\\
    &+\ointctrclockwise_{\lambda_0} \sigma_2 \left( G_{R}(\mu)- \mathbb{1}\right) \sigma_2C(\mu+2\ii K',2K) d\mu \nonumber\\
    &+\ointctrclockwise_{\lambda_0} \sigma_1 \left( G_{R}(\mu)- \mathbb{1}\right) \sigma_1 C(\mu+2K+2\ii K',2K) d\mu, \label{I2}
\end{align}
\begin{align}
     2\pi \ii I_3&:= \int_{\mathcal{C}} \sigma_1 \left( G_{R}(\mu)- \mathbb{1}\right) \sigma_1 C(\mu,2\ii K') d\mu\nonumber\\
     &=\ointctrclockwise_{\lambda_0} \sigma_1 \left( G_{R}(\mu)- \mathbb{1}\right) \sigma_1 C(\mu,2\ii K') d\mu+\ointctrclockwise_{\lambda_0} \sigma_2 \left( G_{R}(\mu)- \mathbb{1}\right) \sigma_2 C(\mu+2K,2\ii K') d\mu \nonumber\\
    &+\ointctrclockwise_{\lambda_0}  \left( G_{R}(\mu)- \mathbb{1}\right) C(\mu+2\ii K',2\ii K') d\mu \\
    & \hspace{5cm}+\ointctrclockwise_{\lambda_0} \sigma_3 \left( G_{R}(\mu)- \mathbb{1}\right) \sigma_3 C(\mu+2K+2\ii K',2\ii K') d\mu,\label{I3}
\end{align}
\begin{align}
    & 2\pi \ii I_4:= \int_{\mathcal{C}} \sigma_2 \left( G_{R}(\mu)- \mathbb{1}\right) \sigma_2 C(\mu,2K+2\ii K') d\mu\nonumber\\
     &=\ointctrclockwise_{\lambda_0} \sigma_2 \left( G_{R}(\mu)- \mathbb{1}\right) \sigma_2 C(\mu,2K+2\ii K') d\mu \\
     & \hspace{6cm}+\ointctrclockwise_{\lambda_0} \sigma_1 \left( G_{R}(\mu)- \mathbb{1}\right) \sigma_1 C(\mu+2K,2K+2\ii K') d\mu \nonumber\\
    &+\ointctrclockwise_{\lambda_0} \sigma_3 \left( G_{R}(\mu)- \mathbb{1}\right)\sigma_3 C(\mu+2\ii K',2K+2\ii K') d\mu \nonumber\\
    &+\ointctrclockwise_{\lambda_0}  \left( G_{R}(\mu)- \mathbb{1}\right)  C(\mu+2K+2\ii K',2K+2\ii K') d\mu.\label{I4}
\end{align}
In order to simplify the computation of $R(0,x,t)$, we group the terms in \eqref{I1}-\eqref{I4} by introducing the following integrals. Define
\begin{align}
    &I_5:=\ointctrclockwise_{\lambda_0}  \left( G_{R}(\mu)- \mathbb{1}\right) C(\mu,0) \frac{d\mu}{2\pi \ii}+\ointctrclockwise_{\lambda_0}  \left( G_{R}(\mu)- \mathbb{1}\right) C(\mu+2K,2K) \frac{d\mu}{2\pi \ii}\nonumber\\
    &+\ointctrclockwise_{\lambda_0}  \left( G_{R}(\mu)- \mathbb{1}\right) C(\mu+2\ii K',2\ii K') \frac{d\mu}{2\pi \ii} \\
    & \hspace{6cm}+\ointctrclockwise_{\lambda_0}  \left( G_{R}(\mu)- \mathbb{1}\right)  C(\mu+2K+2\ii K',2K+2\ii K') \frac{d\mu}{2\pi \ii} \nonumber\\
    &=\ointctrclockwise_{\lambda_0}  \left( G_{R}(\mu)- \mathbb{1}\right) \Big(C(\mu,0) + C(\mu+2K,2K) \nonumber\\
    &\hspace{5cm}+C(\mu+2\ii K',2\ii K')  +C(\mu+2K+2\ii K',2K+2\ii K')\Big) \frac{d\mu}{2\pi \ii} \nonumber \\
    &=\ointctrclockwise_{\lambda_0}  \left( G_{R}(\mu)- \mathbb{1}\right) f(\mu) \frac{d\mu}{2\pi \ii}, \label{Ia}
\end{align}
where 
\begin{gather}\label{def:fmu}
    f(\mu):=C(\mu,0) + C(\mu+2K,2K) +C(\mu+2\ii K',2\ii K')  +C(\mu+2K+2\ii K',2K+2\ii K').
\end{gather}
Similarly, we define three other integrals
\begin{align}
    \sigma_3 I_6 \sigma_3&:=\ointctrclockwise_{\lambda_0}  \left( G_{R}(\mu)- \mathbb{1}\right) \Big(C(\mu+2K,0) + C(\mu,2K)+ \nonumber\\
    &\hspace{4cm}+C(\mu+2K+2\ii K',2\ii K')+  C(\mu+2\ii K',2K+2\ii K') \Big)\frac{d\mu}{2\pi \ii}\nonumber\\
    &=\ointctrclockwise_{\lambda_0}  \left( G_{R}(\mu)- \mathbb{1}\right) f(\mu+2K)\frac{d\mu}{2\pi \ii},\label{Ib}
\end{align}

\begin{align}
    \sigma_1 I_7 \sigma_1&:=\ointctrclockwise_{\lambda_0}  \left( G_{R}(\mu)- \mathbb{1}\right) \Big(C(\mu+2\ii K',0) +C(\mu+2K+2\ii K',2K)\nonumber\\
    & \hspace{6cm}+ C(\mu,2\ii K') + C(\mu+2K,2K+2\ii K') \Big)\frac{d\mu}{2\pi \ii}\nonumber\\
    &=\ointctrclockwise_{\lambda_0}  \left( G_{R}(\mu)- \mathbb{1}\right) f(\mu+2\ii K')\frac{d\mu}{2\pi \ii}\label{Ic}
\end{align}

\begin{align}
    \sigma_2 I_8 \sigma_2&:=\ointctrclockwise_{\lambda_0}  \left( G_{R}(\mu)- \mathbb{1}\right) \Big(C(\mu+2K+2\ii K',0) +C(\mu+2\ii K',2K) \nonumber\\
    & \hspace{6cm}+ C(\mu+2K,2\ii K') +C(\mu,2K+2\ii K')\Big) \frac{d\mu}{2\pi \ii} \nonumber\\
    &=\ointctrclockwise_{\lambda_0}  \left( G_{R}(\mu)- \mathbb{1}\right) f(\mu+2K+2\ii K') \frac{d\mu}{2\pi \ii}.\label{Id}
\end{align}
In terms of integrals above, \eqref{eq:r01} reads 
\begin{align}\label{R0abcd}
    R(0,x,t)\sqrt{c}&= 4\mathbb{1} + I_5 +  I_6 + I_7 + I_8  + \mathcal{O}(t^{-\frac{2}{3}}).
\end{align}
We now compute each of the terms individually. Let us begin with
\begin{align}
    I_5 &\mathop{=}^{\eqref{asymp:GRint}} \frac{1}{\gamma t^{1/2}} \left(\begin{array}{cc}
      0   &  -a  \widetilde{\gamma}\ee^{2\ii \sfc_0}\\
      b   \widetilde{\gamma}^{-1}\ee^{-2\ii \sfc_0}  & 0
    \end{array} \right) f(\lambda_0)+\mathcal{O}(t^{-1}).\label{exp:Ia}
\end{align}
With similar computation, we have
\begin{align}
    I_6 &=\frac{1}{\gamma t^{1/2}}  \left(\begin{array}{cc}
      0   &  a  \widetilde{\gamma}\ee^{2\ii \sfc_0}\\
      -b \widetilde{\gamma}^{-1} \ee^{-2\ii \sfc_0}  & 0
    \end{array} \right) f(\lambda_0+2K)+\mathcal{O}(t^{-1}),\label{exp:Ib}\\
    I_7 &=\frac{1}{\gamma t^{1/2}}   \left(\begin{array}{cc}
      0   &  b  \widetilde{\gamma}^{-1}\ee^{-2\ii \sfc_0} \\
     -a \widetilde{\gamma}\ee^{2\ii \sfc_0}  & 0
    \end{array} \right) f(\lambda_0+2\ii K')+\mathcal{O}(t^{-1}), \label{exp:Ic}\\
    I_8&= \frac{1}{\gamma t^{1/2}} \left(\begin{array}{cc}
      0   &  -b \widetilde{\gamma}^{-1}\ee^{-2\ii \sfc_0}\\
     a  \widetilde{\gamma}\ee^{2\ii \sfc_0}  & 0
    \end{array} \right)f(\lambda_0+2K+2\ii K')+\mathcal{O}(t^{-1}).\label{exp:Id}
\end{align}
Substituting the above expressions \eqref{exp:Ia}-\eqref{exp:Id} in \eqref{R0abcd} gives us
\begin{align}
     R(0,x,t) = \frac{4}{\sqrt{c}} \,\left(\begin{array}{cc}
        1  &  \frac{A}{4t^{1/2}{\gamma}} \\
        \frac{B}{4t^{1/2}{\gamma}}  & 1
     \end{array} \right)+\mathcal{O}(t^{-\frac{2}{3}}),\label{exp:R0}
\end{align}
where 
\begin{align}
   A&:= -a\ee^{2\ii \sfc_0}  \widetilde{\gamma} \left(f(\lambda_0) -f(\lambda_0+2K) \right) + b \ee^{-2\ii \sfc_0} \widetilde{\gamma}^{-1} \left(f(\lambda_0+2\ii K') - f(\lambda_0+2K+2\ii K') \right), \label{exp:A}\\
    B&:=  b \ee^{-2\ii \sfc_0} \widetilde{\gamma}^{-1} \left(f(\lambda_0) - f(\lambda_0+2K) \right) - a \ee^{2\ii \sfc_0} \widetilde{\gamma} \left(f(\lambda_0+2\ii K') - f(\lambda_0+2K+2\ii K') \right). \label{exp:B}
\end{align}
With \eqref{eq:Psi1R0}, and \eqref{sol_LL_RHP_init_2} we get 
\begin{align}
    R(0,x,t) \sigma_3 R(0,x,t)^{-1} = \mathbf{L}(x,t) = \left( \begin{array}{cc}
       L_3(x,t)  & L_1(x,t)-\ii L_2(x,t) \\
       L_1(x,t)+\ii L_2(x,t)  & -L_3(x,t)
    \end{array}\right).
\end{align}
Substituting the explicit value of $R(0,x,t)$ given by \eqref{exp:R0} in the equation above implies that the solution of the LL equation for $t\to \infty$ reads
\begin{align}
    \mathbf{L} &\mathop{=}^{\eqref{sol_LL_RHP_init_2}}  \left(\begin{array}{cc}
        1+ \mathcal{O}(t^{-\frac{2}{3}}) & -\frac{A}{2\gamma \sqrt{t}} + \mathcal{O}(t^{-\frac{2}{3}}) \\
       \frac{B}{2\gamma \sqrt{t}} + \mathcal{O}(t^{-\frac{2}{3}})  &  -1+ \mathcal{O}(t^{-\frac{2}{3}})
    \end{array} \right).
\end{align}
Element-wise, we have
\begin{align}\label{eq:LsAB}
    L_1 = \frac{B-A}{4\gamma \sqrt{t}}+ \mathcal{O}(t^{-\frac{2}{3}}), &&
    L_2 = -\frac{\ii(A+B)}{4\gamma\sqrt{t}}+ \mathcal{O}(t^{-\frac{2}{3}}),
    \end{align}
    \begin{align}\label{eq:LsAB1}
    L_3 =1- \frac{1}{2} \left(L_1^2 + L_2^2 \right)+\mathcal{O}(t^{-\frac{7}{6}}). 
\end{align}
As a final step, we simplify the terms above
\begin{align}
    A+B &= \left( f(\lambda_0) - f(\lambda_0 +2K) + f(\lambda_0 +2\ii K')\right.\\&\left.-f(\lambda_0+2K+2\ii K') \right) \left( b \widetilde{\gamma}^{-1}\ee^{-2\ii \sfc_0} - a \widetilde{\gamma}\ee^{2\ii \sfc_0}\right), \label{def:A+B}\\
    B-A &= \left( f(\lambda_0) - f(\lambda_0 +2K) - f(\lambda_0 +2\ii K')\right.\\ &\left.+f(\lambda_0+2K+2\ii K') \right) \left( b\widetilde{\gamma}^{-1}\ee^{-2\ii \sfc_0} + a  \widetilde{\gamma}\ee^{2\ii \sfc_0}\right).\label{def:B-A}
\end{align}
Recall the definitions of the parameters
\begin{align}\label{littlegammaanditstilde}
    \widetilde{\gamma} = \left(\frac{\gamma^2 t}{\beta_0^2} \right)^{-\ii\nu}\ee^{2\pi\nu}, && \gamma = \ee^{\ii \pi/4}\sqrt{2\varphi_0} ,
\end{align}
\begin{align*}
a=\frac{\ii\sqrt{2\pi}\ee^{-2\pi \nu}}{{\sf r_0}\Gamma(-\ii\nu)}\ee^{-2\ii tp(\lambda_0,\varkappa)},&& b=\frac{\sqrt{2\pi}\ee^{3\pi \nu}}{\overline{{\sf r}}_0\Gamma(\ii\nu)}\ee^{2\ii tp(\lambda_0,\varkappa)},&&{\sf r}(\lambda_0)=\sfr_0 &&ab=\ii\nu,
\end{align*}
\begin{align*}
    \sfc_0=\frac{1}{2\pi }\int_{0}^{\lambda_0} d\left(\log \left( 1+ |{\sf r_0}(\eta)|^2 \right) \right)  \log\beta(\eta- \lambda_0).
\end{align*}
Using the identities
\begin{align}\label{reflectionGammar}
    |\Gamma(\ii\nu)|^2 = \Gamma(\ii\nu) \Gamma(-\ii\nu) = \frac{\pi}{\nu \sinh(\pi \nu)}, && \nu = \frac{1}{2\pi}\log(1+|{\sfr_0}|^2) \Rightarrow |{\sf r}_0|^2= \ee^{ \pi \nu} 2  \sinh (\pi \nu),
\end{align}
we obtain the relations
\begin{align}\label{Gammaridentities}
   \Gamma(\ii\nu) \overline{{\sfr}}_0 = \frac{2\pi \ee^{\pi\nu}}{{\sfr_0}\Gamma(-\ii\nu) \nu}, && |\Gamma(\ii\nu)| |{\sf r}_0|\sqrt{\frac{\nu}{2\pi}} =  \ee^{\pi \nu/2} .
\end{align}
With all the expressions above, we simplify the following term appearing in $L_1$:
\begin{align}
   & b \widetilde{\gamma}^{-1}\ee^{-2\ii \sfc_0}+ a \widetilde{\gamma} \ee^{2\ii \sfc_0}=\frac{\sqrt{2\pi}\ee^{\pi  \nu} \ee^{2\ii tp(\lambda_0,\varkappa)-2\ii \sfc_0}}{\overline{{\sfr}}_0\, \Gamma(\ii\nu)}  \left(\frac{\gamma^2 t }{\beta_0^2} \right)^{\ii\nu}  + \frac{\ii \sqrt{2\pi}\ee^{-2\ii tp(\lambda_0,\varkappa)+2\ii \sfc_0}}{{\sf r}_0 \,\Gamma(-\ii\nu)}  \left(\frac{\gamma^2 t }{\beta_0^2} \right)^{-\ii\nu} \nonumber \\
   &\mathop{=}^{\eqref{littlegammaanditstilde}}\left(\frac{\sqrt{2\pi}\ee^{\pi  \nu/2}\ee^{\frac{\ii\pi}{4}} }{\overline{{\sf r}}_0\, \Gamma(\ii\nu)}\right)\ee^{-\frac{\ii\pi}{4}}  {\ee^{2\ii tp(\lambda_0,\varkappa)-2\ii \sfc_0} \left(\frac{2\varphi_0 t }{\beta_0^2} \right)^{\ii\nu}} \\
   &\hspace{6cm}+ \left(\frac{\ee^{\frac{\ii\pi}{4}}\sqrt{2\pi}\ee^{\pi  \nu/2} }{{\sf r}_0\, \Gamma(-\ii\nu)}\right) \ee^{\frac{\ii\pi}{4}} {\ee^{-2\ii tp(\lambda_0,\varkappa)+2\ii \sfc_0} \left(\frac{2\varphi_0 t }{\beta_0^2} \right)^{-\ii\nu}}  \nonumber \\
    &\mathop{=}^{\eqref{Gammaridentities}}\ee^{\frac{\ii\pi}{4}}\nu^{1/2}\Big[{ \ee^{-\ii\pi/4+\ii \arg(\Gamma(\ii\nu))+ \ii \arg(\sfr_0)}\ee^{2\ii tp(\lambda_0,\varkappa)-2\ii \sfc_0} \left(\frac{2\varphi_0 t }{\beta_0^2} \right)^{\ii\nu}} \nonumber\\
    &\hspace{5cm}+ { }{\ee^{\ii\pi/4-\ii \arg(\Gamma(\ii\nu))- \ii \arg(\sfr_0)}{\ee^{-2\ii tp(\lambda_0,\varkappa)+2\ii \sfc_0} \left(\frac{2\varphi_0 t }{\beta_0^2} \right)^{-\ii\nu}}} \Big] \nonumber\\
    &=2\ee^{\frac{\ii\pi}{4}}\nu^{1/2} \cos\theta(x,t),\label{eq:cos}
\end{align}
where
\begin{align}
    \theta(x,t) =  2tp(\lambda_0,\varkappa)+ \nu \log t-\frac{\pi}{4}- \arg\Gamma(\ii\nu) + \arg \sfr_0  - 2\sfc_0 + \nu \log\left(\frac{2\varphi_0 }{\beta_0^2} \right).
\end{align}
A similar computation gives the following expression for the term appearing in $L_2$ (see \eqref{eq:LsAB}, \eqref{eq:LsAB1}):
\begin{align}\label{eq:sin}
     b\ee^{-2\ii\sfc_0} \widetilde{\gamma}^{-1}- a\ee^{2\ii\sfc_0} \widetilde{\gamma} =2\ee^{\frac{3\pi\ii}{4}} \nu^{1/2} \sin\theta(x,t).
\end{align}
With \eqref{eq:cos} and \eqref{eq:sin}, $L_1$, $L_2$ in \eqref{eq:LsAB}, \eqref{eq:LsAB1} simplify as
\begin{align}
    L_1 &= \frac{1}{2}\left(\frac{\nu}{2 \varphi_0t} \right)^{1/2} {\cos\theta(x,t)}  \Big( f(\lambda_0) - f(\lambda_0 +2K) - f(\lambda_0 +2\ii K')\\
    & \hspace{5cm}+f(\lambda_0+2K+2\ii K') \Big) + \mathcal{O}(t^{-\frac{2}{3}}),\label{simpl:L1}
\end{align}
\begin{align}
      L_2 &= \frac{1}{2}\left(\frac{\nu}{2 \varphi_0t} \right)^{1/2} {\sin\theta(x,t)}{ } \Big( f(\lambda_0) - f(\lambda_0 +2K) + f(\lambda_0 +2\ii K')\\
      &\hspace{5cm}-f(\lambda_0+2K+2\ii K') \Big) + \mathcal{O}(t^{-\frac{2}{3}}).\label{simpl:L2}
\end{align}
The above expressions can be further simplified as follows. Substituting the Cauchy kernel \eqref{def:Cauchy} in the expression for $f(\lambda_0)$ in \eqref{def:fmu} gives
\begin{align*}
     f(\lambda_0)
     &= \zeta(\lambda_0) - \zeta(\lambda_0- \ii K') +\zeta(- K- \ii K')+ \zeta(K)\\
     &+ \zeta(\lambda_0) - \zeta(\lambda_0+2K- \ii K') +\zeta(2K- K- \ii K')+ \zeta(K)\\
     &+ \zeta(\lambda_0) - \zeta(\lambda_0+2\ii K'- \ii K') +\zeta(2\ii K'- K- \ii K')+ \zeta(K)\\
     &+ \zeta(\lambda_0) - \zeta(\lambda_0+2K+2\ii K'- \ii K') +\zeta(2K+2\ii K'- K- \ii K')+ \zeta(K).
\end{align*}
The identity below then follows
\begin{align*}
    f(\lambda_0)-f(\lambda_0+2K) &= 4\left(\zeta(\lambda_0)-\zeta(\lambda_0+2K) \right)-\left(\zeta(\lambda_0-\ii K')-\zeta(\lambda_0+2K-\ii K') \right)\\
    &-\left(\zeta(\lambda_0+2K-\ii K')-\zeta(\lambda_0+4K-\ii K') \right)\\
    &-\left(\zeta(\lambda_0+2\ii K'-\ii K')-\zeta(\lambda_0+2K+2\ii K'-\ii K') \right)\\
    &-\left(\zeta(\lambda_0+2K+2\ii K'-\ii K')-\zeta(\lambda_0+4K+2\ii K'-\ii K') \right) \\
&\mathop{=}^{\eqref{id:zeta2K}}4\left(\frac{w_1(\lambda_0)+w_2(\lambda_0)}{2\rho} \right).
\end{align*}
With the above expressions and the symmetry relations of $w_1$, $w_2$ in \eqref{symm:w1}, \eqref{symm:w2} respectively, the following terms in $L_1$, $L_2$ given by \eqref{simpl:L1}, \eqref{simpl:L2} further simplify
\begin{align}
f(\lambda_0) - f(\lambda_0+2K)+ f(\lambda_0+2\ii K')-f(\lambda_0+2K+2\ii K') 
    &= \frac{4w_1(\lambda_0)}{\rho},\label{simpl:fl1}\\
     f(\lambda_0) - f(\lambda_0+2K)- f(\lambda_0+2\ii K')+f(\lambda_0+2K+2\ii K')  &=\frac{4w_2(\lambda_0)}{\rho}.\label{simpl:fl2}
\end{align}
With the above equations, \eqref{simpl:L1}, \eqref{simpl:L2} are
\begin{align}
    L_1 
    &= \frac{1}{\rho}\left(\frac{2\nu}{t\varphi_0} \right)^{1/2} { w_2(\lambda_0) \cos\theta(x,t)}{}  + \mathcal{O}(t^{-\frac{2}{3}}),
\end{align}
and
\begin{align}
      L_2 
      &=\frac{1}{\rho}\left(\frac{2\nu}{t\varphi_0} \right)^{1/2} { w_1(\lambda_0) \sin\theta(x,t)}{} + \mathcal{O}(t^{-\frac{2}{3}}).
\end{align}

\appendix

\section{Jacobi elliptic functions}\label{sec:jacobi-peoperties}

We write the properties of functions $w_1(\lambda), w_2(\lambda), w_3(\lambda)$, see \eqref{def:w1w2w3}. 
We start with periodicity properties, see \cite[\href{https://dlmf.nist.gov/22.4.T3}{Table 22.4.3}]{DLMF} 
\begin{align}
w_1(\lambda + 2 K ) = -w_1 (\lambda), && 
w_1(\lambda + 2\ii K' ) = w_1 (\lambda), && 
w_1(\lambda + 2K + 2 \ii K' ) =- w_1 (\lambda),\\\label{symm:w1}\\
w_2(\lambda + 2 K ) = -w_2 (\lambda), && 
w_2(\lambda + 2 \ii K' ) = -w_2 (\lambda), && 
w_2(\lambda + 2K + 2 \ii K' ) = w_2 (\lambda),\\\label{symm:w2}\\
w_3(\lambda + 2 K ) = w_3 (\lambda), && 
w_3(\lambda + 2\ii K' ) = -w_3 (\lambda), && 
w_3(\lambda + 2K + 2 \ii K' ) =- w_3 (\lambda).\\ \label{symm:w3}
\end{align}
The poles and zeros are detailed in the Figure \ref{fig:foobar} below, see \cite[\href{http://dlmf.nist.gov/22.4.T1}{Table 22.4.1}]{DLMF}, \cite[\href{http://dlmf.nist.gov/22.4.T2}{Table 22.4.2}]{DLMF}. The residues at $\lambda=0$ are equal to 1, see \cite[\href{http://dlmf.nist.gov/22.10.i}{ \S 22.10(i)}]{DLMF}. 
\begin{figure}[H]
    \centering
    \subfigure[$w_1$]{\includegraphics[width=0.32\textwidth]{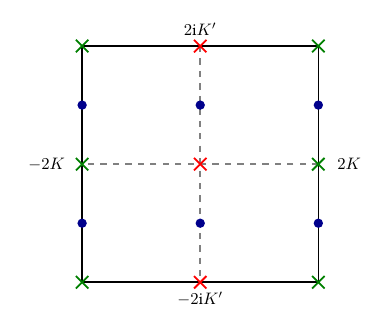}} 
    \subfigure[$w_2$]{\includegraphics[width=0.32\textwidth]{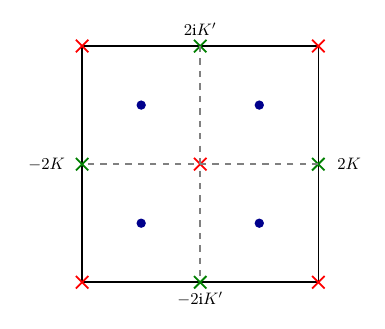}} 
    \subfigure[$w_3$]{\includegraphics[width=0.32\textwidth]{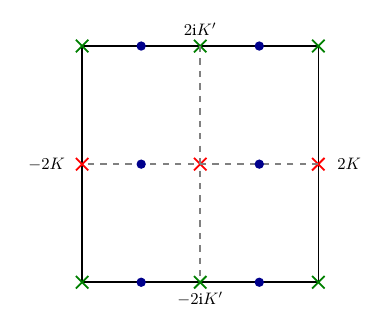}}
    \caption{{ Zeros} and { poles} of $w_1(\lambda)$, $w_2(\lambda)$, $w_3(\lambda)$ in the fundamental domain of the torus. Zeroes are denoted with color {\color{MyBlue}blue}, poles with residue $\rho$ are denoted with color {\color{red}red}, poles with residue $-\rho$ are denoted with color {\color{darkgreen}green}.  }
    \label{fig:foobar}
\end{figure}
The functions $w_1(\lambda), w_2(\lambda), w_3(\lambda)$ satisfy the identities with respect to complex conjugation and are odd functions, which can be derived from the Fourier series representations, see \cite[\href{http://dlmf.nist.gov/22.11}{\S 22.11}]{DLMF}
\begin{align}
    \label{conjugation-w}\overline{w_1(\lambda)}=w_1(\overline{\lambda}), &&
    \overline{w_2(\lambda)}=w_2(\overline{\lambda}),  && 
     \overline{w_3(\lambda)}=w_3(\overline{\lambda}).  && \\
     \label{odd-w}
    w_1(-\lambda) = -w_1(\lambda), && w_2(-\lambda)= -w_2(\lambda), && w_3(-\lambda) =-w_3(\lambda).
\end{align}
The derivatives are given below, see \cite[\href{http://dlmf.nist.gov/22.13.T1}{Table 22.13.1}]{DLMF}
\begin{align}
    \dfrac{\mathrm{d}w_1}{\mathrm{d}\lambda} =-\frac{w_2 w_3}{\rho}, &&  \dfrac{\mathrm{d}w_2}{\mathrm{d}\lambda} = - \frac{w_1 w_3}{\rho}, &&  \dfrac{\mathrm{d}w_3}{\mathrm{d}\lambda} = - \frac{w_1 w_2}{\rho}. \label{eq:w-derivatives}
\end{align}
The addition theorem \cite[\href{http://dlmf.nist.gov/22.8.i}{\S 22.8(i)}]{DLMF}  and Jacobi’s imaginary transformation formulas
\cite[\href{http://dlmf.nist.gov/22.6.T1}{Table 22.6.1}]{DLMF} imply
\begin{align}
{w_1(\lambda, k)}=\frac{w_1(\Re\lambda,k)\,w_2(\Im\lambda,k')\,w_1(\Im\lambda,k')}{w_3^2(\Re\lambda,k)+w_1^2(\Im\lambda,k')}-\ii\,\frac{w_3(\Im\lambda,k')\,w_3(\Re\lambda,k)\,w_2(\Re\lambda,k)}{w_3^2(\Re\lambda,k)+w_1^2(\Im\lambda,k')}\label{eq:w1-complexplane}\\
   {w_2(\lambda, k)}=\frac{w_2(\Re\lambda,k)\,w_1(\Im\lambda,k')\,w_3(\Im\lambda,k')}{w_3^2(\Re\lambda,k)+w_1^2(\Im\lambda,k')}-\ii\,\frac{w_2(\Im\lambda,k')\,w_3(\Re\lambda,k)\,w_1(\Re\lambda,k)}{w_3^2(\Re\lambda,k)+w_1^2(\Im\lambda,k')}\label{w2-complexplane}\\
    {w_3(\lambda, k)}=\frac{w_3(\Re\lambda,k)\,w_2(\Im\lambda,k')\,w_3(\Im\lambda,k')}{w_3^2(\Re\lambda,k)+w_1^2(\Im\lambda,k')}-\ii\,\frac{w_1(\Im\lambda,k')\,w_2(\Re\lambda,k)\,w_1(\Re\lambda,k)}{w_3^2(\Re\lambda,k)+w_1^2(\Im\lambda,k')}\\\label{w3-complexplane}.
\end{align}
Special values are given below, see \cite[\href{http://dlmf.nist.gov/22.5.T1}{Table  22.5.1}]{DLMF}
\begin{align}\label{eq:special values}
    w_1(K)=1,&&w_2(K)=k'.
\end{align}

\section{Weierstrass zeta function} \label{sec:Weierstrass_zeta}
Weierstrass $\zeta$-function can be written in terms of Weierstrass $\sigma$-function, see \cite[\href{http://dlmf.nist.gov/23.2.E8 }{ 23.2.8}]{DLMF}
\begin{align}\label{eq:zeta_sigma}
\zeta(\lambda)=\partial_\lambda(\log(\sigma(\lambda))).
\end{align}
The Weierstrass $\zeta$-function has the following periodicity properties, see \cite[\href{http://dlmf.nist.gov/23.2.iii}{\S 23.2(iii)}]{DLMF}
\begin{align}\label{per:zeta1}
    \zeta(\lambda+4K)-\zeta(\lambda) = 2 \zeta(2K), \\\label{per:zeta2} \zeta(\lambda+4K+4\ii K')-\zeta(\lambda) = 2\zeta(2K+2\ii K'),\\ \label{per:zeta3}\zeta(\lambda+4\ii K')-\zeta(\lambda) = 2\zeta(2\ii K').
\end{align}
Combining \cite[\href{http://dlmf.nist.gov/23.10.E2 }{23.10.2}]{DLMF},  \cite[\href{http://dlmf.nist.gov/23.2.E7}{23.2.7}]{DLMF}, and  \cite[\href{http://dlmf.nist.gov/23.2.E10}{ 23.2.10}]{DLMF} we can see the identity
\begin{align}\label{eq:zeta-identity}
    \zeta(2K)+\zeta(2\ii K')= \zeta(2K+2\ii K')
\end{align}
Following \cite[Chapter 8.11]{lawden2013elliptic} and \cite[\href{https://dlmf.nist.gov/23.6.ii}{\S 23.6(ii)}]{DLMF} we can derive the following identities
\begin{align}
    w_1(\lambda)&=\rho\Big(  \zeta(\lambda) - \zeta(\lambda+2K) - \zeta(\lambda+2K+2\ii K')+ \zeta(\lambda+2\ii K')\\
    &\hspace*{6cm}+ \zeta(2K) + \zeta(2K+2\ii K') -\zeta(2\ii K')\Big)\label{eq:w1-zeta}\\
        w_2(\lambda)&=\rho\Big(  \zeta(\lambda) - \zeta(\lambda+2K) +\zeta(\lambda+2K+2\ii K')- \zeta(\lambda+2\ii K')\\
        &\hspace*{6cm}+ \zeta(2K) - \zeta(2K+2\ii K') +\zeta(2\ii K')\Big)\label{eq:w2-zeta}\\   
        w_3(\lambda)&=\rho\Big(  \zeta(\lambda) + \zeta(\lambda+2K) - \zeta(\lambda+2K+2\ii K')- \zeta(\lambda+2\ii K')\\
        &\hspace*{6cm}- \zeta(2K) + \zeta(2K+2\ii K') +\zeta(2\ii K')\Big)\label{eq:w3-zeta}
\end{align}
Actually,  using \eqref{per:zeta1}--\eqref{per:zeta3}, \eqref{eq:zeta-identity}, Figure \ref{fig:foobar}, and the fact that $\zeta(\lambda)$ is odd function we can notice that left and right hand side of \eqref{eq:w1-zeta}--\eqref{eq:w3-zeta} have the same zeroes. Since their poles coincide too, together with residues, we get the identities. They can be rewritten as
\begin{align}
    \zeta(\lambda) - \zeta(\lambda+2K) + \zeta(2K) & = \frac{w_1(\lambda)+w_2(\lambda)}{2\rho},\label{id:zeta2K}\\
     \zeta(\lambda) - \zeta(\lambda+2\ii K') + \zeta(2\ii K') &= \frac{w_2(\lambda)+w_3(\lambda)}{2\rho},\label{id:zeta2iK'}\\
      \zeta(\lambda) - \zeta(\lambda+2K+2\ii K') + \zeta(2K+2\ii K') &= \frac{w_1(\lambda)+w_3(\lambda)}{2\rho}\label{id:zeta2Kp2iK'}.
\end{align}
\section{Parabolic cylinder parametrix}\label{sec:parabolic}
Following the ideas of \cite{its1981} define the matrix 
\begin{align}
&\mathbf{D}_{a,b}(\xi):=\begin{pmatrix}
     1&0\\-\frac{\xi}{2a}&\frac{1}{a} 
    \end{pmatrix}\\&\times \left\{\begin{array}{l}
     \begin{pmatrix}
     D_{ab}(\ee^{-\frac{\ii\pi}{2}}\xi)&D_{-ab-1}(\xi)\\ D'_{ab}(\ee^{-\frac{\ii\pi}{2}}\xi)&D'_{-ab-1}(\xi)
    \end{pmatrix}\begin{pmatrix}
     \ee^{\frac{\ii\pi}{2} ab }&0\\0&-a\end{pmatrix},\quad \frac{\pi}{4}<\arg (\xi)<\frac{\pi}{2},\\[0.35cm]
     \begin{pmatrix}
     D_{ab}(\ee^{-\frac{\ii\pi}{2}}\xi)&D_{-ab-1}(\ee^{-{\ii\pi }{}}\xi)\\ D'_{ab}(\ee^{-\frac{\ii\pi}{2}}\xi)&D'_{-ab-1}(\ee^{-{\ii\pi }{}}\xi)
    \end{pmatrix}\begin{pmatrix}
     \ee^{\frac{\ii\pi}{2} ab }&0\\0&a \ee^{-{\ii\pi}{} ab } \end{pmatrix},\quad \frac{\pi}{2}<\arg (\xi)<\pi,\\[0.35cm]
    \begin{pmatrix}
     D_{ab}(\ee^{-\frac{3\pi \ii}{2}}z)&D_{-ab-1}(\ee^{-{\ii\pi }{}}\xi)\\D'_{ab}(\ee^{-\frac{3\pi \ii}{2}}\xi)&D'_{-ab-1}(\ee^{-{\ii\pi }{}}\xi)
    \end{pmatrix} \begin{pmatrix}
     \ee^{\frac{3\pi \ii}{2} ab }&0\\0&a \ee^{-{\ii\pi}{} ab } \end{pmatrix},\quad {\pi}{}<\arg (\xi)<\frac{3\pi}{2},\\[0.35cm]
     \begin{pmatrix}
     D_{ab}(\ee^{-\frac{3\pi \ii}{2}}\xi)&D_{-ab-1}(\ee^{-{2\pi \ii}{}}\xi)\\ D'_{ab}(\ee^{-\frac{3\pi \ii}{2}}\xi)&D'_{-ab-1}(\ee^{-{2\pi \ii}{}}\xi)
    \end{pmatrix}\begin{pmatrix}
     \ee^{\frac{3\pi \ii}{2} ab }&0\\0&-a \ee^{-{2\pi \ii}{} ab } \end{pmatrix}.\quad \frac{3\pi}{2}<\arg (\xi)<2\pi,\\
     \begin{pmatrix}
     D_{ab}(\ee^{-\frac{5\pi \ii}{2}}\xi)&D_{-ab-1}(\ee^{-{2\pi \ii}{}}\xi)\\ D'_{ab}(\ee^{-\frac{5\pi \ii}{2}}\xi)& D'_{-ab-1}(\ee^{-{2\pi \ii}{}}\xi)
    \end{pmatrix}\begin{pmatrix}
     \ee^{\frac{5\pi \ii}{2} ab }&0\\0&-a \ee^{-{2\pi \ii}{} ab } \end{pmatrix},\quad 2\pi<\arg (\xi)<\frac{9\pi}{4},\\[0.35cm]
     \end{array}\right.,\label{eq:parabolic_cylinder_parametrix}
\end{align}
where $D_\nu(\xi)$ is the parabolic cylinder function.
The matrix $\mathbf{D}_{a,b}(\xi)$ solves the linear system
\begin{align}
\dfrac{d}{d\xi}\mathbf{D}_{a,b}(\xi)=\left(\frac{\xi}{2}\sigma_3+\begin{pmatrix}
0&a\\
b&0\end{pmatrix}
\right)\mathbf{D}_{a,b}(\xi),
\end{align}
and has the following behavior at infinity (see \cite[\href{http://dlmf.nist.gov/12.9.E1}{ 23.9(i)}]{DLMF})
\begin{equation}\label{asymp:Dab}
\mathbf{D}_{a,b}(\xi)=\left(\mathbb{1}+\frac{m_1}{\xi}+\frac{m_2}{\xi^2}+\mathcal{O}(\xi^{-3})\right)\xi^{ab\sigma_3}\ee^{\frac{\xi^2}{4}\sigma_3},\quad \xi\to \infty,\quad \frac{\pi}{4}<\arg (\xi)<\frac{9\pi}{4}.
\end{equation}
where
\begin{align*}
m_1=
\begin{pmatrix}
0& -a\\
b&0\end{pmatrix}, 
&&    m_2 = \left( \begin{array}{cc}
     \frac{ab(ab-1)}{2}    & 0 \\
       0  & \frac{ab (ab+1)}{2}
    \end{array} \right).
\end{align*}

Using properties \cite[\href{http://dlmf.nist.gov/12.2.E19It}{(12.2.19)}]{DLMF} of parabolic cylinder functions one can show that $\mathbf{D}_{a,b}(\xi)$ satisfies the following Riemann-Hilbert problem
\begin{rhp}\label{rhp:parabolic}
\begin{enumerate}
    \item The function $\mathbf{D}_{a,b}(\xi)$ is analytic for $\xi\in\mathbb{C}\setminus \Sigma^{(par)}$.

    \item For $\lambda \in \Sigma^{(par)}$ the boundary values satisfy the jump condition
    \begin{align}
        \mathbf{D}_{a,b,+}(\xi) = \mathbf{D}_{a,b,-}(\xi) G^{(par)}(\xi),
    \end{align}
    with the jump $G^{(par)}(\xi)$ piecewise defined on each ray of the contour $\Sigma^{(par)}$ as shown on Figure \ref{fig:contour_parabolic}.
    \item Function $\mathbf{D}_{a,b,+}(\xi)$ satisfies the asymptotic condition at infinity
    \begin{align}
        \mathbf{D}_{a,b}(\xi)= \mathbf{D}_{a,b}(\xi)=\left(\mathbb{1}+\mathcal{O}(\xi^{-1})\right)\xi^{ab\sigma_3}\ee^{\frac{\xi^2}{4}\sigma_3},\quad \xi\to\infty
    \end{align}
\end{enumerate}    
\end{rhp}
\begin{figure}[h!]
        \centering
\includegraphics[width=0.7\columnwidth]{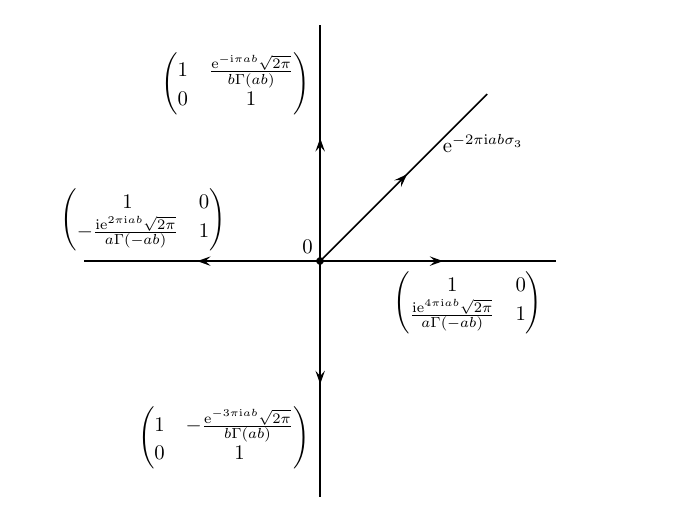}
    \caption{Jump contour $\Sigma^{(par)}$ for RHP \ref{rhp:parabolic}}
\label{fig:contour_parabolic}
    \end{figure}

\section{Proofs of Propositions \ref{Prop:Rodin}, \ref{prop:int_ab}, and \ref{prop:a_b_smooth}}
\label{prop_proof}

\begin{proof}[Proof of Proposition \ref{Prop:Rodin}]
    For $\lambda\neq 0, 2K, 2\ii K', 2K+2\ii K' $ we can obtain the result just by iterating the integral equation. 
We will focus our analysis on function $\upsilon_{-}^{(1)}(\lambda,x)$ first. We write the Neumann series expansion
\begin{align}\label{eq:neumann_series}
\upsilon_{-}^{(1)}(\lambda,x)=\sum_{n=0}^{\infty}\left(\upsilon_{-}^{(1)}\right)_n(\lambda,x)
\end{align}
where $\left(\upsilon_{-}^{(1)}\right)_n(\lambda,x)$ are defined recursively
\begin{align}
 \left(\upsilon_{-}^{(1)}\right)_{n+1}(\lambda,x)=  \int_{-\infty}^{x} \ee^{\ii (\mathbb{1}-\sigma_3) w_3(\lambda) (x-\tau)}\left(U(\lambda,\tau) +\ii w_3(\lambda) \sigma_3 \right)  \left(\upsilon_{-}^{(1)}\right)_{n}(\lambda,\tau)d\tau,\\
   \left(\upsilon_{-}^{(1)}\right)_{0}(\lambda,x)=    \left( \begin{array}{c}
         1  \\
         0 
    \end{array}\right).
\end{align}
Observe that for $\lambda\in \Omega_-$ we have
\begin{align}
\left|  \left(\upsilon_{-}^{(1)}\right)_{0}(\lambda,x)\right|\leq 1,\quad \left|  \left(\upsilon_{-}^{(1)}\right)_{1}(\lambda,x)\right|\leq \intop_{-\infty}^{x}
\left|U(\lambda,\tau) +\ii w_3(\lambda) \sigma_3\right|d\tau=\sigma_-(x),\\
\left|  \left(\upsilon_{-}^{(1)}\right)_{2}(\lambda,x)\right|\leq \intop_{-\infty}^{x}
\left|U(\lambda,\tau) +\ii w_3(\lambda) \sigma_3\right|\sigma_-(\tau)d\tau=\frac{\sigma_-^2(x)}{2}.
\end{align}
Here we denoted the Hilbert-Schmidt matrix norm by $|M|$. Then we have the following estimates proven by induction
\begin{align}
\left| \left(\upsilon_{-}^{(1)}\right)_{n}(\lambda,x)\right|\leq
\frac{\sigma_-^n(x)}{n!}\leq \frac{\sigma_-^n(\infty)}{n!}.
\end{align}
That shows that the series \eqref{eq:neumann_series} converges uniformly, function $\upsilon_{-}^{(1)}(\lambda,x)$ satisfies \eqref{eq:int_ypm1} and is analytic in $\Omega_-$ for $\lambda\neq 0, 2K, 2\ii K', 2K+2\ii K' $. 

Next, we construct $\widehat{\upsilon}_{+}^{(2)}(\lambda,x)$ using the Neumann series 
\begin{align}\label{eq:neumann_series2}
\upsilon_{+}^{(2)}(\lambda,x)=\sum_{n=0}^{\infty}\left(\upsilon_{+}^{(2)}\right)_n(\lambda,x)
\end{align}
with iterations given by
\begin{align}
 \left(\upsilon_{+}^{(2)}\right)_{n+1}(\lambda,x)=-  \int^{\infty}_{x} \ee^{-\ii (\mathbb{1}+\sigma_3) w_3(\lambda) (x-\tau)}\left(U(\lambda,\tau) +\ii w_3(\lambda) \sigma_3\right)  \left(\upsilon_{-}^{(2)}\right)_{n}(\lambda,\tau)d\tau,\\
   \left(\upsilon_{+}^{(2)}\right)_{0}(\lambda,x)=    \left( \begin{array}{c}
         0  \\
         1 
    \end{array}\right).
\end{align}
We can show similarly to above that the series \eqref{eq:neumann_series2} converges uniformly, function $\upsilon_{+}^{(2)}(\lambda,x)$ satisfies \eqref{eq:int_ypm2} and is analytic in $\Omega_-$ for $\lambda\neq 0, 2K, 2\ii K', 2K+2\ii K' $. This procedure can be repeated for $\upsilon_{+}^{(1)}(\lambda,x)$ and $\upsilon_{-}^{(2)}(\lambda,x)$.

Next, let's show that these functions are bounded at $\lambda=0$. We follow the idea of \cite{rodin1984} to approximate solution at that point with the solution of Lax pair of NLS equation. The fact that isotropic degeneration of Landau-Lifshitz equation is equivalent to NLS equation was shown in \cite{zakharov1979equivalence}. We introduce functions $\accentset{\circ}{\upsilon}_-^{(1)}(\lambda,x)$, $\accentset{\circ}{\upsilon}_+^{(2)}(\lambda,x)$ as the solutions of the integral equations
\begin{align}
\accentset{\circ}{\upsilon}_{-}^{(1)}(\lambda,x) &= \left( \begin{array}{c}
         1  \\
         0 
    \end{array}\right) +  \int_{-\infty}^{x} \ee^{{\ii (\mathbb{1}-\sigma_3)  (x-\tau)\rho}/{\lambda}}\left(\accentset{\circ}{U}(\lambda,\tau) +\frac{\ii  \rho\sigma_3 }{\lambda} \right) \accentset{\circ}{\upsilon}_{-}^{(1)}(\lambda,\tau) d\tau,
    \\
    \accentset{\circ}{\upsilon}_{+}^{(2)}(\lambda,x) &= \left( \begin{array}{c}
         0  \\
         1 
    \end{array}\right) -  \int^{\infty}_{x} \ee^{-\ii (\mathbb{1}+\sigma_3)  (x-\tau)\rho/\lambda}\left(\accentset{\circ}{U}(\lambda,\tau) +\frac{\ii  \rho\sigma_3}{\lambda} \right) \accentset{\circ}{\upsilon}_{+}^{(2)}(\lambda,\tau) d\tau,
\end{align}
where $\accentset{\circ}{U}(\lambda,x) = -\ii\sum_{j=1}^{3} \dfrac{L_{j}(x)\rho\sigma_{j}}{\lambda}$. We form matrix $\accentset{\circ}{v}_{-}(\lambda,\tau)=(\accentset{\circ}{\upsilon}_{-}^{(1)}(\lambda,\tau),\accentset{\circ}{\upsilon}_{+}^{(2)}(\lambda,\tau))$. It satisfies the differential equation
\begin{align}
   \frac{\partial \accentset{\circ}{v}_{-}^{} (\lambda, x)}{\partial x} = \accentset{\circ}{U}(\lambda,x)\accentset{\circ}{v}_{-}^{} (\lambda, x)+\frac{\ii\rho}{\lambda}\accentset{\circ}{v}_{-}^{} (\lambda, x)\sigma_3. 
\end{align}
In other words, we replaced the coefficient matrix $U(\lambda,x)$ with the leading term of its asymptotics at $\lambda=0$. This is part of the Lax pair for the isotropic Landau-Lifshitz equation. To deal with function $\accentset{\circ}{v}_{-}^{} (\lambda, x)$ we consider $\accentset{\circ}{f}_{-}^{} (\lambda, x)=\accentset{\circ}{v}_{-}^{} (\lambda, x)\ee^{-\frac{\ii x\rho}{\lambda}\sigma_3}$. It satisfies the differential equation
\begin{align}
    \frac{\partial \accentset{\circ}{f}_{-}^{(1)} (\lambda, x)}{\partial x} = \accentset{\circ}{U}(\lambda,x)\accentset{\circ}{f}_{-}^{(1)} (\lambda, x) 
\end{align}
We can notice $\accentset{\circ}{U}(\lambda,x)$ has eigenvalues $\pm\frac{\ii\rho}{\lambda}$. Let's diagonalize it. Its matrix of eigenvectors has two forms
\begin{align}T_1(x)=\begin{pmatrix}
\frac{1}{2}\left(L_3(x)+1\right)&-\dfrac{L_1(x)-\ii L_2(x)}{L_3(x)+1}\\\frac{1}{2}\left(L_1(x)+\ii L_2(x)\right)&1
\end{pmatrix},\quad\mbox{if } L_3(x)\neq -1,\\
T_2(x)=\begin{pmatrix}
-\dfrac{L_1(x)-\ii L_2(x)}{L_3(x)-1}&\frac{1}{2}({L_3(x)-1}{})\\1&\frac{1}{2}(L_1(x)+\ii L_2(x))
\end{pmatrix},\quad\mbox{if } L_3(x)\neq 1.
\end{align}
We can notice that $T_2(x)=T_1(x)\left(\frac{2}{L_1+\ii L_2}\right)^{\sigma_3}$. Since function $L_3(x)-1$ is smooth, there is $\varepsilon>0$ such that the  sets $\{x:|L_3(x)-1|<\varepsilon \}$ and $\{x:|L_3(x)+1|<\varepsilon\}$ are disjoint. We would like to construct the eigenvectors using the partition of unity. We take nonvanishing infinitely smooth function $c(x)$ such that 
\begin{align}
    c(x)=1,\quad \mbox{if } |L_3(x)-1|<\varepsilon,\quad \mbox{and}&& c(x)=\frac{2}{L_1(x)+\ii L_2(x)}, \quad \mbox{if } 0<|L_3(x)+1|<\varepsilon.
\end{align}
and define \begin{align}T(x)=T_1(x)c^{\sigma_3}(x)d^{\sigma_3}(x),\end{align}
where function $d(x)$ will be determined in a moment.
We can check the identity \begin{align}\left(T(x)\right)^{-1}\left(\sum_{j=1}^{3} {L_{j}(x)\sigma_{j}}\right)T(x)=\sigma_3.\end{align}
We introduce function $s(\lambda,x)=\left(T(x)\right)^{-1}\accentset{\circ}{f}_{-}^{} (\lambda, x) $. It satisfies the differential equation
\begin{align}\label{eq:almost_NLS}
   \frac{\partial s (\lambda, x)}{\partial x} = \left(-\frac{\ii \rho}{\lambda}\sigma_3-T^{-1}\dfrac{dT}{dx}\right)s (\lambda, x) 
\end{align}
It is already very similar to the Zakharov-Shabat system, which are part of the Lax pair of NLS equation. On the next step we would like to cancel the diagonal terms of $T^{-1}\dfrac{dT}{dx}$. For that we can choose function $d(x)$ to satisfy the equation
\begin{align}
\dfrac{d'(x)}{d(x)}=-\dfrac{c'(x)}{c(x)}-\frac{L_3'(x)}{2}-\dfrac{(L_1'(x)+\ii L_2'(x))(L_1(x)-\ii L_2(x))}{2(L_3(x)+1)}
\end{align}
We can notice that for $0<|L_3(x)+1|<\varepsilon$ it takes form
\begin{align}
    \dfrac{d'(x)}{d(x)}=-\frac{L_3'(x)}{2}-\dfrac{(L_1'(x)+\ii L_2'(x))(L_1(x)-\ii L_2(x))}{2(L_3(x)-1)}. 
\end{align}
Since $\frac{d'(x)}{d(x)}\to 0$ as $|x|\to \infty$  the function $d(x)$ approaches to nonvanishing constants at $\pm\infty$, one of which we can choose. 
As the result we have equation \eqref{eq:almost_NLS} becoming Zakharov-Shabat system with spectral parameter $\frac{1}{\lambda}$. More specifically the off-diagonal elements of $T^{-1}\dfrac{dT}{dx}$ which correspond to potentials are given by
\begin{align}
    T^{-1}\frac{dT}{dx}=\begin{pmatrix}
        0&-\dfrac{1}{c^2(x)d^2(x)}\left(\dfrac{L_1(x)-\ii L_2(x)}{L_3(x)+1}\right)'\\
      (L_3(x)+1)^2\dfrac{c^2(x)d^2(x)}{4}\left(\dfrac{L_1(x)+\ii L_2(x)}{L_3(x)+1}\right)' &0
    \end{pmatrix}.
\end{align}
\noindent We can notice that the entries of the matrix above belong to the Schwartz class $\mathcal{S}(\mathbb{R})$. Using standard methods of theory of NLS equation we can show that columns of $s(\lambda,x)=(s^{(1)}(\lambda,x),s^{(2)}(\lambda,x))$ admit the following integral representations, see \cite[Chapter 1, (5.10)]{faddeev1987hamiltonian}.
\begin{align}
    s^{(1)}(\lambda,x)=\begin{pmatrix}
        \ee^{-\frac{\ii\rho x}{\lambda}}\\0
    \end{pmatrix}+\intop_{-\infty}^x\ee^{-\frac{\ii\rho\tau}{\lambda}}\Gamma^{(1)}(x,\tau)d\tau,\quad \Gamma^{(1)}(x,\tau)\in L_1(-\infty,x)\\
    s^{(2)}(\lambda,x)=\begin{pmatrix}
     0  \\ \ee^{\frac{\ii \rho x}{\lambda}}
    \end{pmatrix}-\intop^{\infty}_x\ee^{\frac{\ii\rho \tau}{\lambda}}\Gamma^{(2)}(x,\tau)d\tau,\quad \Gamma^{(2)}(x,\tau)\in L_1(x,\infty)
\end{align}
We can notice that $s(\lambda,x)\ee^{\frac{\ii\rho  x}{\lambda}\sigma_3}$ is bounded for $\lambda\in \overline{\Omega}_-$. Therefore since $\accentset{\circ}{v}_{-}^{} (\lambda, x)=T(x)s^{}(\lambda,x)\ee^{\frac{\ii\rho x}{\lambda}\sigma_3}$ we see that $\accentset{\circ}{v}_{-}^{} (\lambda, x)$ is bounded near $\lambda=0$ as well.

Introduce the ratio between the matrix function $\accentset{\circ}{v}_{-}^{} (\lambda, x)$ and original vector function ${\upsilon}_{-}^{(1)} (\lambda, x)$ 
\begin{align}
{\eta}_{-}^{(1)} (\lambda, x) =\left(\accentset{\circ}{v}_{-}^{} (\lambda, x)\right)^{-1} {\upsilon}_{-}^{(1)} (\lambda, x). 
\end{align}
It satisfies the differential equation
\begin{align}
  \frac{\partial {\eta}_{-}^{(1)} (\lambda, x)}{\partial x}=\left(\accentset{\circ}{v}_{-}^{} (\lambda, x)\right)^{-1}\left(U(\lambda,x)-\accentset{\circ}{U}(\lambda,x)+\ii w_3(\lambda)-\frac{\ii\rho}{\lambda}\right)\accentset{\circ}{v}_{-}^{} (\lambda, x){\eta}_{-}^{(1)} (\lambda, x).
\end{align}
 We see that the coefficient matrix does not have singularity at $\lambda=0$ anymore. That implies that repeating the iteration procedure for the corresponding integral equation we can show that ${\eta}_{-}^{(1)} (\lambda, x)$ is bounded at zero. As result solution ${\upsilon}_{-}^{(1)} (\lambda, x)$ is bounded at zero as well, as desired. The proof for $\upsilon_{+}^{(2)}(\lambda,x)$, $\upsilon_{+}^{(1)}(\lambda,x)$, $\upsilon_{-}^{(2)}(\lambda,x)$ is similar.
\end{proof}

\begin{proof}[Proof of Proposition \ref{prop:int_ab}]\hspace*{5cm}
\begin{enumerate}
\item
Denote $\Lambda_\pm(\lambda,x)=\ee^{\ii x w_3(\lambda)  \sigma_3}F_\pm(\lambda,x)=(\phi_{\pm}^{(1)}(\lambda,x),\phi_{\pm}^{(2)}(\lambda,x))$.
In the limit $x\to \pm \infty$, we have
\begin{align}
\lim_{x\to \pm\infty}\Lambda_\pm(\lambda,x)=I,\qquad
    \lim_{x\to \mp\infty}\Lambda_\pm(\lambda,x)=S(\lambda).
\end{align}
We can then notice that
\begin{align}
 \upsilon_{\pm}^{(1)}(\lambda,x)=\ee^{\ii (\mathbb{1}-\sigma_3) x w_3(\lambda)}\phi_{\pm}^{(1)}(\lambda,x),  \\ 
     \upsilon_{\pm}^{(2)}(\lambda,x)=\ee^{-\ii (\mathbb{1}+\sigma_3) x w_3(\lambda)}\phi_{\pm}^{(2)}(\lambda,x) . 
\end{align}
The function $\phi_{-}^{(1)}(\lambda,x)$ can be expressed as the following integrals
\begin{align}
    \phi_{-}^{(1)}(\lambda,x) &= \left( \begin{array}{c}
         1  \\
         0 
\end{array}\right) +  \int_{-\infty}^{x} \ee^{-\ii (\mathbb{1}-\sigma_3) w_3(\lambda) \tau} \left(U(\lambda,\tau) +\ii w_3 (\lambda)\sigma_3 \right) \upsilon_{-}^{(1)}(\lambda,\tau) d\tau.
\end{align}
Taking $x\to+\infty$ limit of the above expressions, we get \eqref{eq:b_integral}. 
\item
Using \eqref{jump_jost} we can see that
\begin{align}\label{eq:explicit_jump}
\upsilon_{+}^{(1)}(\lambda,x)=\upsilon_{-}^{(1)}(\lambda,x){\sf a}(\lambda)+\upsilon_{-}^{(2)}(\lambda,x)\ee^{2\ii  w_3(\lambda) x}{\sf b}(\lambda)
\end{align}
We know that the Jost solutions are unimodular $\det(\Upsilon_{\pm}(\lambda,x))=1$. 
Taking the determinant of the right hand side of expressions \eqref{eq:a_det_form}, \eqref{eq:b_det_form} and using \eqref{eq:explicit_jump} we obtain the desired result.

\end{enumerate}
\end{proof}

\begin{proof}[Proof of Proposition \ref{prop:a_b_smooth}]
Equation \eqref{eq:a_det_form} and Proposition \ref{Prop:Rodin} imply that $\sfa(\lambda)$ is analytic in $\Omega_+$, while the expression \eqref{eq:b_det_form} and Proposition \ref{Prop:Rodin} say that ${\sf b}(\lambda)\in C^{\infty}(\Gamma_1\cup\Gamma_2)$. 

To obtain the behavior of ${\sf b}(\lambda)$ at zero, we look at the integral representation \eqref{eq:b_integral}. We rewrite $\upsilon_{-}^{(1)}(\lambda,\tau)$ using functions constructed in the proof of Proposition \ref{Prop:Rodin}:
\begin{align}
    \upsilon_{-}^{(1)}(\lambda,\tau)=T(\tau)s(\lambda,\tau)\ee^{\frac{\ii \rho \tau}{\lambda}\sigma_3} \eta_{-}^{(1)}(\lambda,\tau).
\end{align}
As the result we have 
\begin{align}
    {\sf b} (\lambda)=\int_{-\infty}^{ \infty} \ee^{-2\ii w_3(\lambda) \tau}  \left[\left(U(\lambda,\tau) +\ii w_3(\lambda) \sigma_3 \right)  T(\tau)s(\lambda,\tau)\ee^{\frac{\ii \rho\tau}{\lambda}\sigma_3} \eta_{-}^{(1)}(\lambda,\tau)\right]_2 d\tau.
\end{align}
 where columns of $s(\lambda,x)\ee^{\frac{\ii\rho x}{\lambda}\sigma_3}$ can be written as
\begin{align}
    s^{(1)}(\lambda,x)\ee^{\frac{\ii\rho x}{\lambda}}=\begin{pmatrix}
        1\\0
    \end{pmatrix}+\intop_{-\infty}^0\ee^{-\frac{\ii\rho y}{\lambda}}\Gamma^{(1)}(x,x+y)dy,\\
    s^{(2)}(\lambda,x)\ee^{-\frac{\ii\rho x}{\lambda}}=\begin{pmatrix}
     0  \\ 1
    \end{pmatrix}-\intop^{\infty}_0\ee^{\frac{\ii \rho y}{\lambda}}\Gamma^{(2)}(x,x+y)dy.
\end{align}
Where functions $\Gamma^{(1)}(x,\tau)$, $\Gamma^{(2)}(x,\tau)$ actually are infinitely smooth in both variables and decay faster than any power as $\tau\to\infty$.  The behavior of ${\sf b}(\lambda)$ for $\lambda\to 0$ can be obtained using integration by parts. More precisely, denote 
\begin{align}
    \mathcal{F}(\lambda,\tau)=\left[\left(U(\lambda,\tau) +\ii w_3(\lambda) \sigma_3 \right)  T(\tau)s(\lambda,\tau)\ee^{\frac{\ii \rho\tau}{\lambda}\sigma_3} \eta_{-}^{(1)}(\lambda,\tau)\right]_2
\end{align}

We can see that

\begin{align}
    {\sf b}(\lambda)=\left.\sum_{k=0}^{n-1}(-1)^k \frac{\ee^{-2\ii w_3(\lambda) \tau}}{(-2\ii w_3(\lambda))^{k+1}}\dfrac{d^k}{d\tau^k}\mathcal{F}(\lambda,\tau)\right|_{\tau=-\infty}^{\tau=\infty}\\+\frac{(-1)^n}{(-2\ii w_3(\lambda))^{n}}\int_{-\infty}^{ \infty} \ee^{-2\ii w_3(\lambda) \tau}\dfrac{d^n}{d\tau^n}\mathcal{F}(\lambda,\tau)d\tau,\quad \forall n\in \mathbb{N}.
\end{align}
Since the function $\mathcal{F}(\lambda,\tau)$ belongs to the Schwartz class $\mathcal{S}(\mathbb{R})$ in variable $\tau$, the substitution at $\pm\infty$ vanishes. In the same time derivatives with respect $\tau$ applied to $\mathcal{F}(\lambda,\tau)$ do not increase the power of singularity at $\lambda=0$. That provides as with decay of ${\sf b}(\lambda)$ at $\lambda=0$ faster than any power. 
The derivatives with respect to $\lambda$ preserve $\mathcal{F}(\lambda,\tau)$ in the Schwartz class $\mathcal{S}(\mathbb{R})$ with respect to the variable $\tau$, which allows us to repeat integration by parts argument and get decay of derivatives of ${\sf b}(\lambda)$ as well.
\end{proof}

\printbibliography

@article{del2020isomonodromic,
    AUTHOR = {Del Monte, Fabrizio and Desiraju, Harini and Gavrylenko,
              Pavlo},
     TITLE = {Isomonodromic tau functions on a torus as {F}redholm
              determinants, and charged partitions},
   JOURNAL = {Comm. Math. Phys.},
  FJOURNAL = {Communications in Mathematical Physics},
    VOLUME = {398},
      YEAR = {2023},
    NUMBER = {3},
     PAGES = {1029--1084},
      ISSN = {0010-3616,1432-0916},
   MRCLASS = {37N20 (47B35)},
  MRNUMBER = {4561797},
       DOI = {10.1007/s00220-022-04458-y},
    eprint={2011.06292},
      archivePrefix={arXiv},
      primaryClass={math-ph}
}

@article{gamayun2019domain,
  title = {Domain-wall dynamics in the Landau-Lifshitz magnet and the classical-quantum correspondence for spin transport},
  author = {Gamayun, Oleksandr and Miao, Yuan and Ilievski, Enej},
  journal = {Phys. Rev. B},
  volume = {99},
  issue = {14},
  pages = {140301},
  numpages = {6},
  year = {2019},
  month = {4},
  publisher = {American Physical Society},
  doi = {10.1103/PhysRevB.99.140301}
}

@article{bikbaev2014landau,
    AUTHOR = {Bikbaev, R. F. and Bobenko, A. I. and Its, A. R.},
     TITLE = {Landau-{L}ifshitz equation, uniaxial anisotropy case: theory
              of exact solutions},
      NOTE = {Translation of Teoret. Mat. Fiz. {\bf 178} (2014), no. 2,
              163--219},
   JOURNAL = {Theoret. and Math. Phys.},
  FJOURNAL = {Theoretical and Mathematical Physics},
    VOLUME = {178},
      YEAR = {2014},
    NUMBER = {2},
     PAGES = {143--193},
      ISSN = {0040-5779,1573-9333},
   MRCLASS = {35Q60 (35C05 35C08 35P25 35R30 37K40 78A25)},
  MRNUMBER = {3301512},
       DOI = {10.1007/s11232-014-0135-4}
}

@article{quispel1982equation,
    AUTHOR = {Quispel, G. R. W. and Capel, H. W.},
     TITLE = {Equation of motion for the {H}eisenberg spin chain},
   JOURNAL = {Phys. A},
  FJOURNAL = {Physica A},
    VOLUME = {110},
      YEAR = {1982},
    NUMBER = {1-2},
     PAGES = {41--80},
      ISSN = {0378-4371},
   MRCLASS = {82A68 (82A15)},
  MRNUMBER = {647411},
MRREVIEWER = {C.\ A.\ Hurst},
       DOI = {10.1016/0378-4371(82)90104-2}
}

@article{krichever1981holomorphic,
title = {Holomorphic bundles and nonlinear equations},
journal = {Physica D: Nonlinear Phenomena},
volume = {3},
number = {1},
pages = {267-293},
year = {1981},
issn = {0167-2789},
doi = {https://doi.org/10.1016/0167-2789(81)90132-9},
author = {I.M. Krichever and S.P. Novikov}
}

@article{hitchin1987stable,
    AUTHOR = {Hitchin, Nigel},
     TITLE = {Stable bundles and integrable systems},
   JOURNAL = {Duke Math. J.},
  FJOURNAL = {Duke Mathematical Journal},
    VOLUME = {54},
      YEAR = {1987},
    NUMBER = {1},
     PAGES = {91--114},
      ISSN = {0012-7094,1547-7398},
   MRCLASS = {58F07 (14F05 32G13 32L05 32L10)},
  MRNUMBER = {885778},
MRREVIEWER = {G.\ M.\ Khenkin},
       DOI = {10.1215/S0012-7094-87-05408-1}
}

@article{zakharov1976asymptotic,
    AUTHOR = {Zakharov, V. E. and Manakov, S. V.},
     TITLE = {Asymptotic behavior of non-linear wave systems integrated by
              the inverse scattering method},
journal={Soviet Physics JETP,
Soviet Physics. JETP. A translation of Zh. Èksperimentalʹnoĭ i Teoreticheskoĭ Fiziki of the USSR},
   VOLUME = {44},
      YEAR = {1976},
    NUMBER = {1},
     PAGES = {106--112},
   MRCLASS = {81.35 (35Q99)},
  MRNUMBER = {673411},
URL={http://jetp.ras.ru/cgi-bin/e/index/e/44/1/p106?a=list}
}

@article{kitaev1991justification,
    AUTHOR = {Kitaev, A. V.},
     TITLE = {The justification of asymptotic formulas that can be obtained
              by the method of isomonodromic deformations},
   JOURNAL = {Zap. Nauchn. Sem. Leningrad. Otdel. Mat. Inst. Steklov.
              (LOMI)},
  FJOURNAL = {Zapiski Nauchnykh Seminarov Leningradskogo Otdeleniya
              Matematicheskogo Instituta imeni V. A. Steklova Akademii Nauk
              SSSR (LOMI)},
    VOLUME = {179},
      YEAR = {1989},
     PAGES = {101--109, 189--190},
      ISSN = {0373-2703},
   MRCLASS = {34E05 (34A20 58F07)},
  MRNUMBER = {1039598},
MRREVIEWER = {Maria\ Clara\ Nucci},
       DOI = {10.1007/BF01098980}
}

@article{its1985ismonodromic,
    AUTHOR = {Its, A. R.},
     TITLE = {{I}somonodromic solutions of equations of zero curvature},
   JOURNAL = {Izv. Akad. Nauk SSSR Ser. Mat.},
  FJOURNAL = {Izvestiya Akademii Nauk SSSR. Seriya Matematicheskaya},
    VOLUME = {49},
      YEAR = {1985},
    NUMBER = {3},
     PAGES = {530--565, 672},
      ISSN = {0373-2436},
   MRCLASS = {35Q20 (58G25)},
  MRNUMBER = {794955},
MRREVIEWER = {S.\ V.\ Duzhin},
DOI={https://doi.org/10.1070/IM1986v026n03ABEH001157}
}

@Article{Gusman,
    AUTHOR = {Gusman, S. Ja. and Rodin, Ju. L.},
     TITLE = {The kernel of an integral of {C}auchy type on closed {R}iemann

              surfaces.},
   JOURNAL = {Sibirsk. Mat. \v{Z}.},
  FJOURNAL = {Akademija Nauk SSSR. Sibirskoe Otdelenie. Sibirski\u{\i}
              Matemati\v{c}eski\u{\i} \v{Z}urnal},
VOLUME={3},
      YEAR = {1962},
     PAGES = {527--531},
      ISSN = {0037-4474},
   MRCLASS = {30.86},
  MRNUMBER = {138752},
MRREVIEWER = {M.\ H.\ Heins},
url={https://www.mathnet.ru/eng/smj4847}
}

@book{lawden2013elliptic,
    AUTHOR = {Lawden, Derek F.},
     TITLE = {Elliptic functions and applications},
    SERIES = {Applied Mathematical Sciences},
    VOLUME = {80},
 PUBLISHER = {Springer-Verlag, New York},
      YEAR = {1989},
     PAGES = {xiv+334},
      ISBN = {0-387-96965-9},
   MRCLASS = {33-01 (33A25)},
  MRNUMBER = {1007595},
MRREVIEWER = {Bruce\ C.\ Berndt},
       DOI = {10.1007/978-1-4757-3980-0}
}

@article{mikhailov1982landau,
    AUTHOR = {Mikhailov, A. V.},
     TITLE = {The {L}andau-{L}ifschitz equation and the {R}iemann boundary
              problem on a torus},
   JOURNAL = {Phys. Lett. A},
  FJOURNAL = {Physics Letters. A},
    VOLUME = {92},
      YEAR = {1982},
    NUMBER = {2},
     PAGES = {51--55},
      ISSN = {0375-9601,1873-2429},
   MRCLASS = {35Q20 (58F07 82A60)},
  MRNUMBER = {677205},
MRREVIEWER = {V.\ Z.\ Enol\cprime ski\u{\i}},
       DOI = {10.1016/0375-9601(82)90289-4}
}

@article{zakharov1979equivalence,
    AUTHOR = {Zakharov, V. E. and Takhtajan, L. A.},
     TITLE = {Equivalence of a nonlinear {S}chr\"{o}dinger equation and a
              {H}eisenberg ferromagnet equation},
   JOURNAL = {Teoret. Mat. Fiz.},
  FJOURNAL = {Akademiya Nauk SSSR. Teoreticheskaya i Matematicheskaya
              Fizika},
    VOLUME = {38},
      YEAR = {1979},
    NUMBER = {1},
     PAGES = {26--35},
      ISSN = {0564-6162},
   MRCLASS = {82A05},
  MRNUMBER = {525848},
	doi = {10.1007/BF01030253}
}

@article{takhtajan1977integration,
    AUTHOR = {Takhtajan, L. A.},
     TITLE = {Integration of the continuous {H}eisenberg spin chain through
              the inverse scattering method},
   JOURNAL = {Phys. Lett. A},
  FJOURNAL = {Physics Letters. A},
    VOLUME = {64},
      YEAR = {1977},
    NUMBER = {2},
     PAGES = {235--237},
      ISSN = {0375-9601,1873-2429},
   MRCLASS = {81.45},
  MRNUMBER = {456051},
       DOI = {10.1016/0375-9601(77)90727-7}
}

@article{bertola2022nonlinear,
    AUTHOR = {Bertola, M.},
     TITLE = {Nonlinear steepest descent approach to orthogonality on
              elliptic curves},
   JOURNAL = {J. Approx. Theory},
  FJOURNAL = {Journal of Approximation Theory},
    VOLUME = {276},
      YEAR = {2022},
     PAGES = {Paper No. 105717, 33},
      ISSN = {0021-9045,1096-0430},
   MRCLASS = {30F99 (30E10 41A21)},
  MRNUMBER = {4379878},
MRREVIEWER = {A.\ Bultheel},
       DOI = {10.1016/j.jat.2022.105717},
      eprint={2108.11576},
      archivePrefix={arXiv},
      primaryClass={math.CV}
}

@article{bertola2021pade,
    AUTHOR = {Bertola, Marco},
     TITLE = {Pad\'{e} approximants on {R}iemann surfaces and {KP} tau
              functions},
   JOURNAL = {Anal. Math. Phys.},
  FJOURNAL = {Analysis and Mathematical Physics},
    VOLUME = {11},
      YEAR = {2021},
    NUMBER = {4},
     PAGES = {Paper No. 149, 38},
      ISSN = {1664-2368,1664-235X},
   MRCLASS = {30E25 (14H70 37K10 41A21 42C05)},
  MRNUMBER = {4298095},
MRREVIEWER = {Ahmed\ Lesfari},
       DOI = {10.1007/s13324-021-00585-2},
  eprint={2101.09557},
      archivePrefix={arXiv},
      primaryClass={nlin.SI}
}

@article{desiraju2023class,
      title={On a class of elliptic orthogonal polynomials and their integrability}, 
      author={Harini Desiraju and Tomas Lasic Latimer and Pieter Roffelsen},
      year={2023},
      eprint={2305.04404},
      archivePrefix={arXiv},
      primaryClass={math.CA},
journal={Constructive Approximation},
  pages={1--44},
  year={2024},
  publisher={Springer},
    doi = {10.1007/s00365-024-09687-z}
}

@techreport{Sklyanin:121210,
      author        = "Sklyanin, E K",
      title         = "{On complete integrability of the Landau--Lifshitz equation}",
      reportNumber  = "LOMI-79-3, LOMI-E-79-3",
      year          = "1979",
      url           = "https://cds.cern.ch/record/121210"
}

@article{borovik1981linear,
    AUTHOR = {Borovik, A. E. and Robuk, V. N.},
     TITLE = {Linear pseudopotentials and conservation laws for the
              {L}andau-{L}if\v{s}ic equation, describing the nonlinear
              dynamics of a ferromagnet with uniaxial anisotropy},
   JOURNAL = {Teoret. Mat. Fiz.},
  FJOURNAL = {Akademiya Nauk SSSR. Teoreticheskaya i Matematicheskaya
              Fizika},
    VOLUME = {46},
      YEAR = {1981},
    NUMBER = {3},
     PAGES = {371--381},
      ISSN = {0564-6162},
   MRCLASS = {82A15 (35C99 58F07)},
  MRNUMBER = {622517},
MRREVIEWER = {Michael\ Monastyrsky},
	doi = {10.1007/BF01032734}
}

@article{bobenko1983landau,
	doi = {10.1007/BF02104979},
    AUTHOR = {Bobenko, A. I.},
     TITLE = {The {L}andau-{L}ifshits equation. {T}he ``dressing''
              procedure. {E}lementary excitations},
      NOTE = {Differential geometry, Lie groups and mechanics, V},
   JOURNAL = {Zap. Nauchn. Sem. Leningrad. Otdel. Mat. Inst. Steklov.
              (LOMI)},
  FJOURNAL = {Zapiski Nauchnykh Seminarov Leningradskogo Otdeleniya
              Matematicheskogo Instituta imeni V. A. Steklova Akademii Nauk
              SSSR (LOMI)},
    VOLUME = {123},
      YEAR = {1983},
     PAGES = {58--66},
      ISSN = {0373-2703},
   MRCLASS = {35Q20 (14K25 58F07 82A05)},
  MRNUMBER = {697241},
MRREVIEWER = {Alexander\ A.\ Pankov}
}

@book{faddeev1987hamiltonian,
    AUTHOR = {Faddeev, Ludwig D. and Takhtajan, Leon A.},
     TITLE = {Hamiltonian methods in the theory of solitons},
    SERIES = {Classics in Mathematics},
   EDITION = {English},
      NOTE = {Translated from the 1986 Russian original by Alexey G. Reyman},
 PUBLISHER = {Springer, Berlin},
      YEAR = {2007},
     PAGES = {x+592},
      ISBN = {978-3-540-69843-2},
   MRCLASS = {37K10 (35P25 35Q51 35Q55 35R30 37J35 37N20 81R12)},
  MRNUMBER = {2348643},
}

@article{de2022recent,
    AUTHOR = {de Laire, Andr\'{e}},
     TITLE = {Recent results for the {L}andau-{L}ifshitz equation},
   JOURNAL = {SeMA J.},
  FJOURNAL = {SeMA Journal. Boletin de la Sociedad Espan{n}ola de
              Matem\'{a}tica Aplicada},
    VOLUME = {79},
      YEAR = {2022},
    NUMBER = {2},
     PAGES = {253--295},
      ISSN = {2254-3902,2281-7875},
   MRCLASS = {35Q55 (82D40)},
  MRNUMBER = {4423665},
       DOI = {10.1007/s40324-021-00254-1},
  eprint={2011.01692},
      archivePrefix={arXiv},
      primaryClass={math.AP}
}

@phdthesis{de2020landau,
  title={The Landau--Lifshitz equation and related models},
  author={de Laire, Andr{\'e}},
  year={2020},
  school={Universit{\'e} de Lille},
URL={https://hal.science/tel-02985356}
}

@book{wei2012micromagnetics,
  title={Micromagnetics and recording materials},
  author={Wei, Dan},
  year={2012},
  publisher={Springer Science \& Business Media}
}

@article{Landau:437299,
      author        = "Landau, Lev Davidovich and Lifshitz, E",
      title         = "{On the theory of the dispersion of magnetic permeability in ferromagnetic bodies}",
      journal       = "Ukr. J. Phys.",
      volume        = "53",
      pages         = "14-22",
      year          = "2008",
number="Special Issue",
      url           = "https://verga.cpt.univ-mrs.fr/pdfs/Landau-1935fk.pdf",
note={Reprinted from Physikalische Zeitschrift der Sowjetunion 8, Part 2, 153, 1935.}
}

@article {rodin1989,
    AUTHOR = {Rodin, Yu. L.},
     TITLE = {Solvability of the inverse scattering problem for the
              {L}andau-{L}ifshitz equation. {I}},
   JOURNAL = {Phys. D},
  FJOURNAL = {Physica D. Nonlinear Phenomena},
    VOLUME = {40},
      YEAR = {1989},
    NUMBER = {1},
     PAGES = {1--10},
      ISSN = {0167-2789,1872-8022},
   MRCLASS = {34L25 (30E25 34A55 34L40 35Q40 58F07)},
  MRNUMBER = {1028273},
       DOI = {10.1016/0167-2789(89)90023-7}
}

@article {rodin1984,
    AUTHOR = {Rodin, Yu. L.},
     TITLE = {The {R}iemann boundary problem on {R}iemann surfaces and the
              inverse scattering problem for the {L}andau-{L}ifschitz
              equation},
   JOURNAL = {Phys. D},
  FJOURNAL = {Physica D. Nonlinear Phenomena},
    VOLUME = {11},
      YEAR = {1984},
    NUMBER = {1-2},
     PAGES = {90--108},
      ISSN = {0167-2789,1872-8022},
   MRCLASS = {82A05 (30E25 58G20)},
  MRNUMBER = {762391},
MRREVIEWER = {L.\ A.\ Takhtajan (Takhtadzhyan)},
       DOI = {10.1016/0167-2789(84)90437-8}
}

@article{bikbaev1988asymptotics,
    AUTHOR = {Bikbaev, R. F. and Its, A. R.},
     TITLE = {Asymptotic behavior of the solution to the {C}auchy problem as
              {$t\to\infty$} for the {L}andau-{L}ifshits equation},
   JOURNAL = {Teoret. Mat. Fiz.},
  FJOURNAL = {Akademiya Nauk SSSR. Teoreticheskaya i Matematicheskaya
              Fizika},
    VOLUME = {76},
      YEAR = {1988},
    NUMBER = {1},
     PAGES = {3--17},
      ISSN = {0564-6162},
   MRCLASS = {35Q20 (35B40 35C20 35Q15 58F40 82A05)},
  MRNUMBER = {961945},
MRREVIEWER = {Song\ Mu\ Zheng},
       DOI = {10.1007/BF01029424}
}

@article{zhou1989riemann,
    AUTHOR = {Zhou, Xin},
     TITLE = {The {R}iemann-{H}ilbert problem and inverse scattering},
   JOURNAL = {SIAM J. Math. Anal.},
  FJOURNAL = {SIAM Journal on Mathematical Analysis},
    VOLUME = {20},
      YEAR = {1989},
    NUMBER = {4},
     PAGES = {966--986},
      ISSN = {0036-1410},
   MRCLASS = {34B25 (35G15 45F15 45P05)},
  MRNUMBER = {1000732},
MRREVIEWER = {David\ J.\ Kaup},
       DOI = {10.1137/0520065}
}

@incollection{deift1994long,
  title={Long-time behavior of the non-focusing linear Schrodinger equation-a case study},
  author={Deift, Percy and Zhou, Xin},
  booktitle={New series: lectures in mathematical sciences},
  year={1994},
  publisher={University of Tokyo}
}

@article {veselov,
    AUTHOR = {Veselov, A. P.},
     TITLE = {The {L}andau-{L}ifshits equation and integrable systems of
              classical mechanics},
   JOURNAL = {Dokl. Akad. Nauk SSSR},
  FJOURNAL = {Doklady Akademii Nauk SSSR},
    VOLUME = {270},
      YEAR = {1983},
    NUMBER = {5},
     PAGES = {1094--1097},
      ISSN = {0002-3264},
   MRCLASS = {58F07 (14K25 82A15)},
  MRNUMBER = {714061},
url={https://www.mathnet.ru/eng/dan46133}
}

@article {bobenko1985,
	doi = {10.1007/BF01086019},
    AUTHOR = {Bobenko, A. I.},
     TITLE = {Real algebro-geometric solutions of the {L}andau-{L}ifshits
              equation in {P}rym theta-functions},
   JOURNAL = {Funktsional. Anal. i Prilozhen.},
  FJOURNAL = {Akademiya Nauk SSSR. Funktsional\cprime ny\u{\i} Analiz i ego
              Prilozheniya},
    VOLUME = {19},
      YEAR = {1985},
    NUMBER = {1},
     PAGES = {6--19, 96},
      ISSN = {0374-1990},
   MRCLASS = {14K20 (14K25 35Q20)},
  MRNUMBER = {783701},
}

@misc{DLMF,
    shorthand ={DLMF},
       title = "{\it NIST Digital Library of Mathematical Functions}",
howpublished = "\url{https://dlmf.nist.gov/}, Release 1.1.10 of 2023-06-15",
        note = "F.~W.~J. Olver, A.~B. {Olde Daalhuis}, D.~W. Lozier, B.~I. Schneider,
                R.~F. Boisvert, C.~W. Clark, B.~R. Miller, B.~V. Saunders,
                H.~S. Cohl, and M.~A. McClain, eds."}

@book {rodin_1988,
    AUTHOR = {Rodin, Yu. L.},
     TITLE = {The {R}iemann boundary problem on {R}iemann surfaces},
    SERIES = {Mathematics and its Applications (Soviet Series)},
    VOLUME = {16},
 PUBLISHER = {D. Reidel Publishing Co., Dordrecht},
      YEAR = {1988},
     PAGES = {xiv+199},
      ISBN = {90-277-2653-1},
   MRCLASS = {30F10 (30E25 30F30 30G20 35Q15 35R30)},
  MRNUMBER = {937707},
MRREVIEWER = {William\ Abikoff},
       DOI = {10.1007/978-94-009-2885-5}
}

@article {deift_zhou_annals,
    AUTHOR = {Deift, P. and Zhou, X.},
     TITLE = {A steepest descent method for oscillatory {R}iemann-{H}ilbert
              problems. {A}symptotics for the {MK}d{V} equation},
   JOURNAL = {Ann. of Math. (2)},
  FJOURNAL = {Annals of Mathematics. Second Series},
    VOLUME = {137},
      YEAR = {1993},
    NUMBER = {2},
     PAGES = {295--368},
      ISSN = {0003-486X,1939-8980},
   MRCLASS = {35Q53 (34A55 34L25 35Q15 35Q55)},
  MRNUMBER = {1207209},
MRREVIEWER = {Alexey\ V.\ Samokhin},
       DOI = {10.2307/2946540},
}

@article {its1981,
    AUTHOR = {Its, A. R.},
     TITLE = {Asymptotic behavior of the solutions to the nonlinear
              {S}chr\"odinger equation, and isomonodromic deformations of
              systems of linear differential equations},
   JOURNAL = {Dokl. Akad. Nauk SSSR},
  FJOURNAL = {Doklady Akademii Nauk SSSR},
    VOLUME = {261},
      YEAR = {1981},
    NUMBER = {1},
     PAGES = {14--18},
      ISSN = {0002-3264},
   MRCLASS = {58G40 (35B40 35Q20 81C05)},
  MRNUMBER = {636848},
MRREVIEWER = {M. Antonets},
url={https://www.mathnet.ru/eng/dan44822}
}

@article {Kamvissis_Teschl_2012,
    AUTHOR = {Kamvissis, Spyridon and Teschl, Gerald},
     TITLE = {Long-time asymptotics of the periodic {T}oda lattice under
              short-range perturbations},
   JOURNAL = {J. Math. Phys.},
  FJOURNAL = {Journal of Mathematical Physics},
    VOLUME = {53},
      YEAR = {2012},
    NUMBER = {7},
     PAGES = {073706, 35},
      ISSN = {0022-2488,1089-7658},
   MRCLASS = {37K10 (35Q15 35Q53 37K15)},
  MRNUMBER = {2985265},
MRREVIEWER = {Ana\ Foulqui\'e{} Moreno},
       DOI = {10.1063/1.4731768},
 eprint={0705.0346},
      archivePrefix={arXiv},
      primaryClass={nlin.SI}
}

@article {Kamvissis,
    AUTHOR = {Kamvissis, Spyridon},
     TITLE = {A {R}iemann-{H}ilbert problem in a {R}iemann surface},
   JOURNAL = {Acta Math. Sci. Ser. B (Engl. Ed.)},
  FJOURNAL = {Acta Mathematica Scientia. Series B. English Edition},
    VOLUME = {31},
      YEAR = {2011},
    NUMBER = {6},
     PAGES = {2233--2246},
      ISSN = {0252-9602,1572-9087},
   MRCLASS = {34M50 (35L65 35L67 35Q15)},
  MRNUMBER = {2931502},
MRREVIEWER = {Theodora\ Ioannidou},
       DOI = {10.1016/S0252-9602(11)60396-2},
}

@article {Kamvissis_Teschl_2006,
    AUTHOR = {Kamvissis, Spyridon and Teschl, Gerald},
     TITLE = {Stability of periodic soliton equations under short range
              perturbations},
   JOURNAL = {Phys. Lett. A},
  FJOURNAL = {Physics Letters. A},
    VOLUME = {364},
      YEAR = {2007},
    NUMBER = {6},
     PAGES = {480--483},
      ISSN = {0375-9601,1873-2429},
   MRCLASS = {35Q53 (35B10 35B15 35Q51)},
  MRNUMBER = {2307865},
       DOI = {10.1016/j.physleta.2006.12.032},
      eprint={nlin/0607053},
      archivePrefix={arXiv},
      primaryClass={nlin.SI}
}

@article {Kruger_Teschl_2009,
    AUTHOR = {Kr\"uger, Helge and Teschl, Gerald},
     TITLE = {Stability of the periodic {T}oda lattice in the soliton
              region},
   JOURNAL = {Int. Math. Res. Not. IMRN},
  FJOURNAL = {International Mathematics Research Notices. IMRN},
      YEAR = {2009},
    NUMBER = {21},
     PAGES = {3996--4031},
      ISSN = {1073-7928,1687-0247},
   MRCLASS = {37K40 (37K45 37K60)},
  MRNUMBER = {2549948},
MRREVIEWER = {Dmitry\ E.\ Pelinovsky},
       DOI = {10.1093/imrn/rnp077},
      eprint={0807.0244},
      archivePrefix={arXiv},
      primaryClass={nlin.SI}
}

@article {Mikikits-Leitner_Teschl,
    AUTHOR = {Mikikits-Leitner, Alice and Teschl, Gerald},
     TITLE = {Long-time asymptotics of perturbed finite-gap {K}orteweg-de
              {V}ries solutions},
   JOURNAL = {J. Anal. Math.},
  FJOURNAL = {Journal d'Analyse Math\'ematique},
    VOLUME = {116},
      YEAR = {2012},
     PAGES = {163--218},
      ISSN = {0021-7670,1565-8538},
   MRCLASS = {35Q53 (35B40)},
  MRNUMBER = {2892619},
       DOI = {10.1007/s11854-012-0005-7},
      eprint={1008.3698},
      archivePrefix={arXiv},
      primaryClass={nlin.SI}
}

@article {Egorova_Michor_Teschl,
    AUTHOR = {Egorova, Iryna and Michor, Johanna and Teschl, Gerald},
     TITLE = {Long-time asymptotics for the {T}oda shock problem:
              non-overlapping spectra},
   JOURNAL = {J. Math. Phys. Anal. Geom.},
  FJOURNAL = {Journal of Mathematical Physics, Analysis, Geometry},
    VOLUME = {14},
      YEAR = {2018},
    NUMBER = {4},
     PAGES = {406--451},
      ISSN = {1812-9471,1817-5805},
   MRCLASS = {37K40 (35Q15 37K10 37K60)},
  MRNUMBER = {3886073},
MRREVIEWER = {Mikhail\ B.\ Sevryuk},
       DOI = {10.15407/mag14.04.406},
      eprint={1406.0720},
      archivePrefix={arXiv},
      primaryClass={nlin.SI}
}

@article {Piorkowski_Teschl,
    AUTHOR = {Piorkowski, Mateusz and Teschl, Gerald},
     TITLE = {A scalar {R}iemann-{H}ilbert problem on the torus:
              applications to the {K}d{V} equation},
   JOURNAL = {Anal. Math. Phys.},
  FJOURNAL = {Analysis and Mathematical Physics},
    VOLUME = {12},
      YEAR = {2022},
    NUMBER = {5},
     PAGES = {Paper No. 112, 19},
      ISSN = {1664-2368,1664-235X},
   MRCLASS = {35Q15 (30E25 30F10 33E05 35Q53)},
  MRNUMBER = {4474933},
       DOI = {10.1007/s13324-022-00715-4},
      eprint={2106.10948},
      archivePrefix={arXiv},
      primaryClass={math.AP}
}

@article {Korotkin,
    AUTHOR = {Korotkin, D.},
     TITLE = {Solution of matrix {R}iemann-{H}ilbert problems with
              quasi-permutation monodromy matrices},
   JOURNAL = {Math. Ann.},
  FJOURNAL = {Mathematische Annalen},
    VOLUME = {329},
      YEAR = {2004},
    NUMBER = {2},
     PAGES = {335--364},
      ISSN = {0025-5831,1432-1807},
   MRCLASS = {32G34 (30F60 34M50 35Q15 37K20)},
  MRNUMBER = {2060367},
MRREVIEWER = {V.\ A.\ Golubeva},
       DOI = {10.1007/s00208-004-0528-z},
      eprint={math-ph/0306061},
      archivePrefix={arXiv},
      primaryClass={math-ph}
}

@article {Deift_Its_Zhou,
    AUTHOR = {Deift, Percy A. and Its, Alexander R. and Zhou, Xin},
     TITLE = {A {R}iemann-{H}ilbert approach to asymptotic problems arising
              in the theory of random matrix models, and also in the theory
              of integrable statistical mechanics},
   JOURNAL = {Ann. of Math. (2)},
  FJOURNAL = {Annals of Mathematics. Second Series},
    VOLUME = {146},
      YEAR = {1997},
    NUMBER = {1},
     PAGES = {149--235},
      ISSN = {0003-486X,1939-8980},
   MRCLASS = {47G10 (15A52 30E25 34A55 34E20 41A60 82B23 82B44)},
  MRNUMBER = {1469319},
MRREVIEWER = {John\ N.\ Palmer},
       DOI = {10.2307/2951834},
}

\end{document}